\theoremstyle{plain}
\newtheorem{thm}{Theorem}[section] 
\newtheorem{defn}[thm]{Definition} 
\newtheorem{lem}[thm]{Lemma} 
\newtheorem{pro}[thm]{Proposition} 
\newtheorem{rem}[thm]{Remark} 
\newtheorem{co}[thm]{Corollary} 
\begin{document}
\begin{center}
\section*{Mixing properties of multivariate infinitely divisible random fields}
\subsection*{Riccardo Passeggeri\footnote[1]{Department of Mathematics, Imperial College London, UK. Email: riccardo.passeggeri14@imperial.ac.uk} and Almut E.~D.~Veraart\footnote[2]{Department of Mathematics, Imperial College London, UK. Email: a.veraart@imperial.ac.uk}}
\end{center}
\begin{abstract}
In this work we present different results concerning mixing properties of multivariate infinitely divisible (ID) stationary random fields. First, we derive some necessary and sufficient conditions for mixing of stationary ID multivariate random fields in terms of their spectral representation. Second, we prove that (linear combinations of independent) mixed moving average fields are mixing. Further, using a simple modification of the proofs of our results we are able to obtain weak mixing versions of our results. Finally, we prove the equivalence of ergodicity and weak mixing for multivariate ID stationary random fields.
\\
\\
\textbf{Key words:} multivariate random field, mixing, weak mixing, ergodicity, infinitely divisible, mixed moving average.
\end{abstract}
\section{Introduction}
In 1970 in his fundamental work \cite{Ma}, Maruyama provided pivotal results for infinitely divisible (ID) processes. Among them, he proved that under certain conditions, known afterwards as Maruyama conditions, these processes are mixing (see Theorem 6 of \cite{Ma}). After him various authors contributed on this line of research, see for example Gross \cite{Gross} and Kososzka and Taqqu \cite{KoTa}. In 1996 Rosinski and Zak extended Maruyama results proving that the a stationary ID process $(X_{t})_{t\in\mathbb{R}}$ is mixing if and only if $\lim\limits_{t\rightarrow\infty}\mathbb{E}\left[e^{i(X_{t}-X_{0})} \right] =\mathbb{E}\left[e^{iX_{0}} \right]\mathbb{E}\left[e^{-iX_{0}} \right]$, provided the L\'{e}vy measure of $X_{0}$ has no atoms in $2\pi\mathbb{Z}$. More recently, Fuchs and Stelzer \cite{FuSt} extended some of the main results of Rosinski and Zak to the multivariate case. Parallel to this line of research, new developments have been obtained for ergodic and weak mixing properties of infinitely divisible \textit{random fields}. In particular, see Roy \cite{Roy2007} and \cite{Roy2009} for Poissonian ID random fields and Roy \cite{Roy2010}, Roy and Samorodnitsky \cite{RoySam} and \cite{WRS} for $\alpha$-stable univariate random fields.

In the present work we fill an important gap by extending the results of Maruyama \cite{Ma}, Rosinski and Zak \cite{RoZa}, and Fuchs \& Stelzer \cite{FuSt} to the multivariate random field case. First, this is crucial for applications since many of them consider a multidimensional domain composed by both spatial and temporal components (and not just temporal ones). This is typically the case for many physical systems, like turbulences (e.g.~\cite{BarSch} \cite{BarBenVar}), and in econometrics (see the models based on panel data). Second, with the present work we also close the gap between the two lines of research presented above by focusing on the more general case of multivariate stationary ID random fields.

On the modelling/application level, we prove that multivariate mixed moving average fields are mixing. This is a relevant result since L\'{e}vy driven moving average fields are extensively used in many applications throughout different disciplines, like brain imaging \cite{J}, tumour growth \cite{BarSch2} and turbulences \cite{BarSch},\cite{BarSch2}, among many.

Moreover, we discuss conditions which ensure that a multivariate random fields is weakly mixing. First, we show that the results obtained can be modified to obtain similar results for the weak mixing case. Second, we prove that a multivariate stationary ID random field is weak mixing if and only if it is ergodic.

The present work is structured as follows. In Section 2, we discuss some preliminaries on mixing and derive the mixing conditions for multivariate ID stationary random fields. In addition, we study some extensions and other related results. In Section 3 we prove that (sums of independent) mixed moving averages (MMA) are mixing, including MMA with an extended subordinated basis. In Section 4 we obtain weak mixing versions of the results obtained in Section 2 and we prove the equivalence between ergodicity and weak mixing for stationary ID random fields.
\\ In order to simplify the exposition, we decided to put long proofs in the appendices. 
\section{Preliminaries and results on mixing conditions}
In this section we analyse mixing conditions for stationary infinite divisible random fields. We work with the probability space $(\Omega,\mathcal{F},\mathbb{P})$ and the measurable space $(\mathbb{R}^{d},\mathcal{B}(\mathbb{R}^{d}))$, where $\mathcal{B}(\mathbb{R}^{d})$ is the Borel $\sigma$-algebra on the vector field $\mathbb{R}^{d}$. We write $\mathcal{L}(X_{t})$ for the distribution, or law, of the random variable $X_{t}$. Now, let $(\theta_{t})_{t\in\mathbb{R}^{l}}$ be a measure preserving $\mathbb{R}^{l}$ action on $(\Omega,\mathcal{F},\mathbb{P})$. Consider the random field $X_{t}(\omega)=X_{0}\circ\theta_{t}(\omega)$, $t\in\mathbb{R}^{l}$. The random field $(X_{t})_{t\in\mathbb{R}^{l}}$ defined in this way is stationary and, conversely, any stationary measurable random field can be expressed in this form. Further, we have, with a little bit of abuse of notation, $\theta_{v}(B):=\{\theta_{v}(\omega)\in\Omega:\omega\in B\}=\{\omega'\in\Omega:X_{0}(\omega')=X_{v}(\omega) \enspace \textnormal{for} \enspace\omega\in B\}$. \\
Then $(X_{t})_{t\in\mathbb{R}^{l}}$ is mixing if and only if (see Wang, Roy and Stoev \cite{WRS} equation (4.4)):
\begin{equation}
\lim\limits_{n\rightarrow\infty}\mathbb{P}(A\cap\theta_{t_{n}}(B))=\mathbb{P}(A)\mathbb{P}(B),
\end{equation}
for all $A,B\in\sigma_{X}$ and all $(t_{n})_{n\in\mathbb{N}}\in\mathcal{T}$, where $\sigma_{X}:=\sigma(\{X_{t}:t\in\mathbb{R}^{l}\})$ is the $\sigma$-algebra generated by $(X_{t})_{t\in\mathbb{R}^{l}}$ and $\mathcal{T}:=\left\{(t_{n})_{n\in\mathbb{N}}\subset\mathbb{R}^{l}:\lim\limits_{n\rightarrow\infty}\|t_{n}\|_{\infty}=\infty \right\}$.\\  The following definition is based on the characteristic function of $(X_{t})_{t\in\mathbb{R}^{l}}$ (see \cite{WRS} equation (A.6)):
\begin{equation}\label{mix}
\lim\limits_{n\rightarrow\infty}\mathbb{E}\left[\exp\left(i\sum_{j=1}^{r}\beta_{j}X_{s_{j}}\right)\exp\left(i\sum_{k=1}^{q}\gamma_{k}X_{p_{k}+t_{n}}\right)\right]=\mathbb{E}\left[\exp\left(i\sum_{j=1}^{r}\beta_{j}X_{s_{j}}\right)\right]\mathbb{E}\left[\exp\left(i\sum_{k=1}^{q}\gamma_{k}X_{p_{k}}\right)\right],
\end{equation}
for all $r,q\in\mathbb{N},\beta_{j},\gamma_{k}\in\mathbb{R},p_{j},s_{k}\mathbb{R}^{l}$ and $(t_{n})_{n\in\mathbb{N}}\in\mathcal{T}$.
\noindent Further, for the multivariate (or $\mathbb{R}^{d}$-valued) random field we have the following definition based on the characteristic function of $(X_{t})_{t\in\mathbb{R}^{l}}$.
\begin{defn}\label{definitionMIX}Let $(X_{t})_{t\in\mathbb{R}^{l}}$ be an $\mathbb{R}^{d}$-valued stationary random field. Then $(X_{t})_{t\in\mathbb{R}^{l}}$ is said to be mixing if for all $\lambda=(s_{1},...,s_{m})',\mu=(p_{1},...,p_{m})'\in\mathbb{R}^{ml}$ and $\theta_{1},\theta_{2}\in\mathbb{R}^{md}$
\begin{equation}\label{MIX}
\lim\limits_{n\rightarrow\infty}\mathbb{E}\left[\exp\left(i\langle\theta_{1},X_{\lambda}\rangle+i\langle\theta_{2},X_{\tilde{\mu}}\rangle\right)\right]=\mathbb{E}\left[\exp\left(i\langle\theta_{1},X_{\lambda}\rangle\right)\right]\mathbb{E}\left[\exp\left(i\langle\theta_{2},X_{\mu}\rangle\right)\right]
\end{equation}
where $X_{\lambda}:=(X_{s_{1}}',...,X_{s_{m}}')'\in\mathbb{R}^{md}$ and
$\tilde{\mu}=(p_{1}+t_{n},...,p_{m}+t_{n})'$, where $(t_{n})_{n\in\mathbb{N}}$ is any sequence in $\mathcal{T}$.
\end{defn}
\noindent We are now ready to state our first result.
\begin{thm}\label{THEOREM}
Let $(X_{t})_{t\in\mathbb{R}^{l}}$, with $l\in\mathbb{N}$, be an $\mathbb{R}^{d}$-valued strictly stationary infinite divisible random field such that $Q_{0}$, the L\'{e}vy measure of $\mathcal{L}(X_{0})$, satisfies $Q_{0}(\{x=(x_{1},...,x_{d})'\in\mathbb{R}^{d}:\exists j\in\{1,...,d\},x_{j}\in2\pi\mathbb{Z}\})=0$. Then $(X_{t})_{t\in\mathbb{R}^{l}}$ is mixing if and only if
\begin{equation}\label{MAINeq}
\lim\limits_{n\rightarrow\infty}\mathbb{E}\left[e^{i(X_{t_{n}}^{(j)}-X_{0}^{(k)})}\right]=\mathbb{E}\left[e^{iX_{0}^{(j)}}\right]\cdot\mathbb{E}\left[e^{-iX_{0}^{(k)}}\right],
\end{equation}
for any $j,k=1,..,d$ and for any sequence $(t_{n})_{n\in\mathbb{N}}\in\mathcal{T}$.
\end{thm}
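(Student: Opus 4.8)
The plan is to prove necessity and sufficiency separately; necessity is immediate, while sufficiency is the substantial part and proceeds through the L\'evy--Khintchine representation, using throughout that strict stationarity forces every one-point marginal law to coincide with $\mathcal{L}(X_{0})$. \emph{Necessity} follows by specialising Definition~\ref{definitionMIX}: take $m=1$, $\lambda=\mu=0\in\mathbb{R}^{l}$, and $\theta_{1}=-e_{k}$, $\theta_{2}=e_{j}$, where $e_{1},\dots,e_{d}$ is the standard basis of $\mathbb{R}^{d}$. Then $X_{\lambda}=X_{\mu}=X_{0}$ and $X_{\tilde\mu}=X_{t_{n}}$, so \eqref{MIX} reads $\lim_{n}\mathbb{E}[\exp(i(X_{t_{n}}^{(j)}-X_{0}^{(k)}))]=\mathbb{E}[e^{-iX_{0}^{(k)}}]\,\mathbb{E}[e^{iX_{0}^{(j)}}]$, which is \eqref{MAINeq}; this step uses neither infinite divisibility nor the atom condition. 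For sufficiency, assume \eqref{MAINeq}, fix $m$, $\lambda=(s_{1},\dots,s_{m})'$, $\mu=(p_{1},\dots,p_{m})'$, $\theta_{1},\theta_{2}\in\mathbb{R}^{md}$ and $(t_{n})_{n}\in\mathcal{T}$, and aim at \eqref{MIX}. The vector $(X_{\lambda}',X_{\tilde\mu}')'\in\mathbb{R}^{2md}$ is infinitely divisible with some triplet $(\Sigma_{n},\gamma_{n},Q_{n})$, and, since an infinitely divisible characteristic function never vanishes, \eqref{MIX} is equivalent to convergence of the cumulant of $(X_{\lambda}',X_{\tilde\mu}')'$ at $(\theta_{1},\theta_{2})$ to the sum of the cumulants of $X_{\lambda}$ at $\theta_{1}$ and of $X_{\mu}$ at $\theta_{2}$.

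Strict stationarity pins down, independently of $n$, the two $md\times md$ diagonal blocks of $\Sigma_{n}$ (equal to the Gaussian covariances of $X_{\lambda}$ and $X_{\mu}$) and the projections of $Q_{n}$ onto the first and last $md$ coordinates (equal to the L\'evy measures $Q_{\lambda}$ of $X_{\lambda}$ and $Q_{\mu}$ of $X_{\mu}$). Hence the cumulant convergence reduces to (i) the off-diagonal block $R_{n}$ of $\Sigma_{n}$ tending to $0$, and (ii) $Q_{n}$ converging --- vaguely on $\mathbb{R}^{2md}\setminus\{0\}$, with the usual control of $\|x\|^{2}\wedge1$ near the origin and of the mass near infinity --- to the ``decoupled'' L\'evy measure equal to $Q_{\lambda}$ carried on the first block (second block set to $0$) plus $Q_{\mu}$ carried on the second block; once (i)--(ii) hold, $\gamma_{n}$ converges to the correct drift automatically, its remaining $n$-dependence being only a truncation correction that is a continuous functional of $Q_{n}$.

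The heart of the matter is deriving (i)--(ii) from \eqref{MAINeq}, and here I would follow Rosi\'nski--\.Zak \cite{RoZa} and Fuchs--Stelzer \cite{FuSt}. Translating time by stationarity, for every coordinate $(s_{i},k)$ of the first block and $(p_{i'},j)$ of the second the shifted sequence $(p_{i'}-s_{i}+t_{n})_{n}$ is again in $\mathcal{T}$, so \eqref{MAINeq} applies; taking logarithms in the L\'evy--Khintchine formula for the corresponding two-dimensional marginal gives that the Gaussian cross-covariance of these two coordinates, plus a drift term, plus $\int(e^{-ix}-1)(e^{iy}-1)\,Q_{n}(dx)$ (integrand depending only on those two coordinates), tends to $0$. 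The cross-covariances are bounded by Cauchy--Schwarz against the fixed marginal variances and $\{Q_{n}\}$ is vaguely precompact because its marginals are fixed, so one may pass to limits along subsequences; the atom hypothesis $Q_{0}(\{x:\,x_{j}\in2\pi\mathbb{Z}\text{ for some }j\})=0$ ensures that $|e^{ix}-1|>0$ holds $Q_{n}$-a.e.\ on each relevant coordinate hyperplane, which is exactly what forces every subsequential vague limit of $Q_{n}$ to carry no mass off the two coordinate subspaces and forces the cross-covariances to vanish in the limit. A non-negative measure supported on that union with the prescribed marginals is necessarily the decoupled measure, so the limit is unique and the full sequence converges; tail bounds from the fixed marginals together with $(e^{-ix}-1)(e^{iy}-1)=O(|x||y|)$ near the origin then upgrade vague convergence to the convergence of triplets required above.

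The genuine obstacle is this last step: \eqref{MAINeq} constrains only the one-dimensional ``difference'' functionals $X_{t_{n}}^{(j)}-X_{0}^{(k)}$ with unit coefficients, yet one must show this already forces the full $2md$-dimensional asymptotic decoupling of \emph{both} the Gaussian and the Poissonian parts. Cleanly separating the Gaussian cross-covariance from the L\'evy-measure contribution, and reconciling the truncation function of the large vector with those of its marginals, is precisely where the atom condition and a delicate compactness-and-continuity argument are indispensable. By contrast, the passage from the process case $l=1$ treated in \cite{FuSt} to the random-field case $l\in\mathbb{N}$ is harmless: since translating by a fixed vector preserves membership in $\mathcal{T}$, one simply replaces ``$|t_{n}|\to\infty$'' by ``$\|t_{n}\|_{\infty}\to\infty$'' throughout.
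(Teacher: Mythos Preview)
Your necessity direction is fine and matches the paper. For sufficiency, however, there is a genuine gap at exactly the point you yourself flag as ``the genuine obstacle'': your sketch does not actually separate the Gaussian cross-covariance from the Poisson contribution, and the mechanism you describe cannot do it.

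Taking logarithms of \eqref{MAINeq} and extracting real parts gives, for each pair $(j,k)$,
\[
\sigma_{jk}(t_{n})+\int_{\mathbb{R}^{2}}\bigl[(\cos x-1)(\cos y-1)+\sin x\sin y\bigr]\,Q_{0t_{n}}^{(jk)}(dx,dy)\longrightarrow 0.
\]
(There is no residual drift term: stationarity makes it cancel.) The integrand here is \emph{not} nonnegative, because of the $\sin x\sin y$ piece. Hence along a vaguely convergent subsequence you cannot conclude that the limit L\'evy measure is concentrated on the set $\{x\in2\pi\mathbb{Z}\}\cup\{y\in2\pi\mathbb{Z}\}$, nor that $\sigma_{jk}(t_{n})\to0$: a nonzero Gaussian limit could in principle be compensated by a negative Poisson term. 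The atom hypothesis does not help here; its role comes \emph{after} one knows the limit sits on those lines, to cut it down to the axes.

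The missing ingredient, which is the crux of \cite{RoZa} and \cite{FuSt} and is reproduced in the paper, is an $L^{2}$ argument: from \eqref{MAINeq} one shows that $e^{iX_{t_{n}}^{(j)}}$ converges weakly in $L^{2}(\Omega)$ to the constant $\mathbb{E}[e^{iX_{0}^{(j)}}]$ (first on the span of $\{1,e^{iX_{t}^{(k)}}:t\in\mathbb{R}^{l}\}$, then on its closure, then trivially on the orthogonal complement). Testing against $\bar Y=e^{iX_{0}^{(k)}}$ yields the companion ``plus'' identity
\[
\lim_{n\to\infty}\mathbb{E}\bigl[e^{i(X_{t_{n}}^{(j)}+X_{0}^{(k)})}\bigr]=\mathbb{E}\bigl[e^{iX_{0}^{(j)}}\bigr]\,\mathbb{E}\bigl[e^{iX_{0}^{(k)}}\bigr].
\]
Its logarithm gives a second equation with $-\sigma_{jk}(t_{n})$ and $-\sin x\sin y$; adding the two kills both and leaves
\[
\int_{\mathbb{R}^{2}}(\cos x-1)(\cos y-1)\,Q_{0t_{n}}^{(jk)}(dx,dy)\longrightarrow 0,
\]
whose integrand \emph{is} nonnegative. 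Now the compactness argument you outlined goes through: any vague subsequential limit is concentrated on $\{x\in2\pi\mathbb{Z}\}\cup\{y\in2\pi\mathbb{Z}\}$, the atom hypothesis pushes it onto the axes, and substituting back forces $\sigma_{jk}(t_{n})\to0$ and $Q_{0t_{n}}(\|x\|\cdot\|y\|>\delta)\to0$. The paper then invokes Theorem~\ref{LAST-THEOREM} rather than redoing the full $2md$-dimensional cumulant computation you sketch; either organisation is fine once the separation is achieved, but without the $L^{2}$ step neither works.
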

\noindent The above theorem relies on the following result, which is the multivariate random field extension of the Maruyama conditions (see Theorem 6 of \cite{Ma}).
\begin{thm}\label{LAST-THEOREM}
Let $(X_{t})_{t\in\mathbb{R}^{l}}$ be an $\mathbb{R}^{d}$-valued strictly stationary infinite divisible random field. Then $(X_{t})_{t\in\mathbb{R}^{l}}$ is mixing if and only if
\\
\\
(MM1) the covariance matrix function $\Sigma(t_{n})$ of the Gaussian part of $(X_{t_{n}})_{t_{n}\in\mathbb{R}^{l}}$ tends to 0, as $n\rightarrow\infty$,
\\
\\
(MM2') $\lim\limits_{n\rightarrow\infty}Q_{0t_{n}}(\|x\|\cdot\|y\|>\delta)=0$ for any $\delta>0$, where $Q_{0t_{n}}$ is the L\'{e}vy measure of $\mathcal{L}(X_{0},X_{t_{n}})$ on $(\mathbb{R}^{d},\mathcal{B}(\mathbb{R}^{d}))$,
\\
\\
where $(t_{n})_{n\in\mathbb{N}}$ is any sequence in $\mathcal{T}$.
\end{thm}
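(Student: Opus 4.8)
\noindent I would base the proof on a single cumulant identity and then read off each of (MM1) and (MM2') from one term of it. Fix $(t_{n})_{n}\in\mathcal{T}$; for a finite set $F\subset\mathbb{R}^{l}$ let $X_{F}$ denote the $\mathbb{R}^{|F|d}$-valued vector stacking the $X_{t}$, $t\in F$. Strict stationarity gives $\mathcal{L}(X_{F+t_{n}})=\mathcal{L}(X_{F})$, so $(X_{F},X_{F+t_{n}})$ is infinitely divisible with a L\'evy--Khintchine triplet both of whose diagonal marginal projections equal the fixed triplet of $\mathcal{L}(X_{F})$. Subtracting the L\'evy--Khintchine exponents of the two marginals from that of the joint law, every drift term and every linear truncation term cancels, and one is left with
\begin{align*}
&\log\mathbb{E}\left[e^{i\langle\theta_{1},X_{F}\rangle+i\langle\theta_{2},X_{F+t_{n}}\rangle}\right]-\log\mathbb{E}\left[e^{i\langle\theta_{1},X_{F}\rangle}\right]-\log\mathbb{E}\left[e^{i\langle\theta_{2},X_{F}\rangle}\right]\\
&\qquad\qquad=-\langle\theta_{1},R_{n}^{F}\theta_{2}\rangle+\int\left(e^{i\langle\theta_{1},u\rangle}-1\right)\left(e^{i\langle\theta_{2},v\rangle}-1\right)\nu_{n}^{F}(du,dv),
\end{align*}
where $R_{n}^{F}$ is the cross-covariance block of the Gaussian part of $(X_{F},X_{F+t_{n}})$ and $\nu_{n}^{F}$ its L\'evy measure. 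Since mixing in the sense of Definition~\ref{definitionMIX} is equivalent to the left-hand side tending to $0$ for every finite $F$ and all $\theta_{1},\theta_{2}$ (take $F=\{s_{1},\dots,s_{m},p_{1},\dots,p_{m}\}$ and select coordinates), the theorem reduces to analysing the two terms on the right.

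\smallskip
\noindent\emph{Necessity.} If $(X_{t})$ is mixing, then taking $m=1$, $s_{1}=p_{1}=0$ in Definition~\ref{definitionMIX} yields $\mathcal{L}(X_{0},X_{t_{n}})\Rightarrow\mathcal{L}(X_{0})\otimes\mathcal{L}(X_{0})$ by L\'evy's continuity theorem, both laws being infinitely divisible. I would then invoke the classical Gnedenko--Kolmogorov criterion for weak convergence of infinitely divisible laws (see, e.g., Sato, \textit{L\'evy Processes and Infinitely Divisible Distributions}, Thm.~8.7): one gets $\int f\,d\nu_{n}\to\int f\,d\nu_{\ast}$ for every bounded continuous $f$ vanishing near the origin, and $\lim_{\varepsilon\downarrow0}\limsup_{n}\big|\langle\xi,A_{n}\xi\rangle+\int_{\|z\|\le\varepsilon}\langle\xi,z\rangle^{2}\nu_{n}(dz)-\langle\xi,A_{\ast}\xi\rangle-\int_{\|z\|\le\varepsilon}\langle\xi,z\rangle^{2}\nu_{\ast}(dz)\big|=0$ for all $\xi\in\mathbb{R}^{2d}$, where $(A_{\ast},\nu_{\ast})$ is the Gaussian covariance and L\'evy measure of $\mathcal{L}(X_{0})\otimes\mathcal{L}(X_{0})$; in particular $\nu_{\ast}$ is carried by the two coordinate subspaces and the off-diagonal block of $A_{\ast}$ vanishes. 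Testing the measure convergence against functions supported in $\{\|x\|\,\|y\|>\delta/2\}$ (bounded away from $0$), together with the uniform tail estimate $\nu_{n}(\|(x,y)\|>M)\le Q_{0}(\|x\|>M/\sqrt{2})+Q_{0}(\|y\|>M/\sqrt{2})$ — valid since the marginal L\'evy measures are the fixed $Q_{0}$ — gives (MM2'). For (MM1) I would use that the diagonal Gaussian blocks of $(X_{0},X_{t_{n}})$ both equal the fixed Gaussian covariance of $X_{0}$, so that $\langle\xi,A_{n}\xi\rangle-\langle\xi,A_{\ast}\xi\rangle=2\langle\theta_{1},R_{n}^{\{0\}}\theta_{2}\rangle$ for $\xi=(\theta_{1},\theta_{2})$, while $\int_{\|z\|\le\varepsilon}\langle\xi,z\rangle^{2}\nu_{n}(dz)\le\|\xi\|^{2}\big(\int_{\|x\|\le\varepsilon}\|x\|^{2}Q_{0}(dx)+\int_{\|y\|\le\varepsilon}\|y\|^{2}Q_{0}(dy)\big)\to0$ as $\varepsilon\downarrow0$ uniformly in $n$; the criterion then forces the cross-covariance $\Sigma(t_{n})=R_{n}^{\{0\}}\to0$.

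\smallskip
\noindent\emph{Sufficiency.} Assume (MM1) and (MM2'). For fixed $s,p\in F$ the sequence $(p+t_{n}-s)_{n}$ is again in $\mathcal{T}$ (triangle inequality for $\|\cdot\|_{\infty}$), so (MM1) along it makes each entry of $R_{n}^{F}$ vanish in the limit, hence $\langle\theta_{1},R_{n}^{F}\theta_{2}\rangle\to0$. Likewise, since $\{\|u\|\,\|v\|>\delta\}\subset\bigcup_{s,p\in F}\{\|u_{s}\|\,\|v_{p}\|>\delta/|F|^{2}\}$ and the projection of $\nu_{n}^{F}$ onto the $(s,p)$-coordinates is the L\'evy measure of $(X_{s},X_{p+t_{n}})$, i.e. $Q_{0,\,p+t_{n}-s}$, condition (MM2') along the sequences $(p+t_{n}-s)_{n}$ gives $\nu_{n}^{F}(\|u\|\,\|v\|>\delta)\to0$ for every $\delta>0$. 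It then remains to show $\int(e^{i\langle\theta_{1},u\rangle}-1)(e^{i\langle\theta_{2},v\rangle}-1)\nu_{n}^{F}(du,dv)\to0$; its modulus is at most $\int(2\wedge\|\theta_{1}\|\|u\|)(2\wedge\|\theta_{2}\|\|v\|)\,\nu_{n}^{F}$, which I would split over $\{\|u\|\,\|v\|>\delta\}$ (controlled by the previous line, the integrand being bounded) and $\{\|u\|\,\|v\|\le\delta\}$, further decomposed into $\{\|v\|>\eta\}$, $\{\|v\|\le\eta<\|u\|\}$ and $\{\|u\|\le\eta,\|v\|\le\eta\}$. On the first two sets one coordinate is forced below $\delta/\eta$, so the integrand is $O(\delta/\eta)$ while the mass is $\le\nu_{n}^{F}(\|v\|>\eta)$ resp. $\le\nu_{n}^{F}(\|u\|>\eta)$, both bounded uniformly in $n$ by the fixed marginal L\'evy measure of $X_{F}$; on the third set the integrand is $\le\tfrac12(1\wedge\|u\|^{2}+1\wedge\|v\|^{2})$ and the integral is $\le\int_{\|u\|\le\eta}(1\wedge\|u\|^{2})+\int_{\|v\|\le\eta}(1\wedge\|v\|^{2})$ against that fixed marginal, which tends to $0$ as $\eta\downarrow0$ by dominated convergence. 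Choosing $\eta$, then $\delta$, small makes the whole $\{\|u\|\,\|v\|\le\delta\}$ contribution uniformly small, and letting $n\to\infty$ completes the proof.

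\smallskip
\noindent The step I expect to be the main obstacle is precisely this last estimate — converting the ``pointwise'' condition (MM2') into an $L^{1}$-type control of the cross part of $\nu_{n}^{F}$ near the origin. It works only because strict stationarity pins every marginal L\'evy measure to a single fixed measure (the L\'evy measure of $X_{F}$, and $Q_{0}$ in the two-point case), which supplies the $n$-uniform integrability bounds used above; since L\'evy measures may be infinite near $0$, no such control is available without that uniformity. By contrast, the reduction of Definition~\ref{definitionMIX} to the two-point case and the bookkeeping that produces the cumulant identity are routine once the truncation conventions are fixed.
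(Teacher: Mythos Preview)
Your proof is correct and follows a genuinely different route from the paper's. The paper's sufficiency argument expands $\log\Phi_{r}(\theta_{1},\theta_{2})$ into four pieces $I_{1}+I_{2}+I_{3}+I_{4}$, handles the near-origin piece $I_{3}$ by a second-order Taylor expansion with explicit remainder, and first upgrades (MM2') to the stronger condition (MM2) via Lemma~\ref{LEMMA}; your argument instead isolates the single cumulant (codifference) identity
\[
\log\Phi_{n}-\log\Phi_{1}-\log\Phi_{2}=-\langle\theta_{1},R_{n}^{F}\theta_{2}\rangle+\int(e^{i\langle\theta_{1},u\rangle}-1)(e^{i\langle\theta_{2},v\rangle}-1)\,\nu_{n}^{F}(du,dv)
\]
and bounds the integral directly by the $\eta$--$\delta$ layer decomposition, bypassing both the Taylor bookkeeping and Lemma~\ref{LEMMA}. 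For necessity, the paper refers back to the proof of Theorem~\ref{THEOREM} (tightness, cluster points $F_{jk}$, the $(\cos x-1)(\cos y-1)$ analysis, and the observation that under full mixing the cluster point is the product law so its L\'evy measure lives on the axes), whereas you invoke Sato's convergence criterion for ID triplets and read off (MM1) and (MM2') from the known limit $\mathcal{L}(X_{0})\otimes\mathcal{L}(X_{0})$. Both approaches hinge on exactly the same structural input---stationarity pins the marginal L\'evy measures to the fixed $Q_{0}$ (and $Q^{F}$), which supplies the $n$-uniform integrability near the origin---but your packaging is more economical and makes transparent that the reduction to the two-point law is lossless. The paper's route, on the other hand, keeps the explicit $I_{1},\ldots,I_{4}$ decomposition visible, which ties in more directly with Maruyama's original formulation and with the later use of Lemma~\ref{LEMMA} in Section~3. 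One small remark: in your necessity step for (MM2') the uniform tail estimate is not strictly needed, since Sato's criterion already gives $\int f\,d\nu_{n}\to\int f\,d\nu_{\ast}$ for bounded continuous $f$ vanishing near the origin (tightness at infinity being a consequence of $\mu_{n}\Rightarrow\mu$); your inclusion of it is harmless but redundant.
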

\noindent Notice that the above conditions are fewer than the Maruyama conditions. This is because we used the following lemma, which is a multivariate random field extension of Lemma 1 of \cite{Magd} and Lemma 2.2 of \cite{FuSt}.
\begin{lem}\label{LEMMA}
Assume that $\lim\limits_{n\rightarrow\infty}Q_{0t_{n}}(\|x\|\cdot\|y\|>\delta)=0$ holds for any $\delta>0$, where $Q_{0t_{n}}$ is the L\'{e}vy measure of $\mathcal{L}(X_{0},X_{t_{n}})$ on $(\mathbb{R}^{d},\mathcal{B}(\mathbb{R}^{d}))$, and $(t_{n})_{n\in\mathbb{N}}\in\mathbb{R}^{l}$. Then one has
\begin{equation*}
\lim\limits_{n\rightarrow\infty}\int_{0<\|x\|^{2}+\|y\|^{2}\leq 1}\|x\|\cdot\|y\|Q_{0t_{n}}(dx,dy)=0.
\end{equation*}
\end{lem}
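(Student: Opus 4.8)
The plan is to split the domain $D:=\{(x,y)\in\mathbb{R}^{2d}:\,0<\|x\|^{2}+\|y\|^{2}\le 1\}$ according to whether the coupling term $\|x\|\cdot\|y\|$ exceeds a threshold $\delta>0$: the part where it is large will be controlled directly by the hypothesis, while the part where it is small will be controlled by a moment estimate that is \emph{uniform in $n$}. This uniformity is the essential point, and it comes from strict stationarity: since $\mathcal{L}(X_{t_{n}})=\mathcal{L}(X_{0})$, both $d$-dimensional marginals of the infinitely divisible vector $(X_{0},X_{t_{n}})$ have law $\mathcal{L}(X_{0})$, so the images of $Q_{0t_{n}}$ under $(x,y)\mapsto x$ and under $(x,y)\mapsto y$ both agree with $Q_{0}$ on $\mathbb{R}^{d}\setminus\{0\}$.

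From this projection property I would first record two uniform estimates. For every $r\in(0,1]$,
\[
\int_{\{0<\|x\|^{2}+\|y\|^{2}\le r\}}\bigl(\|x\|^{2}+\|y\|^{2}\bigr)\,Q_{0t_{n}}(dx,dy)\;\le\;2\int_{\{0<\|x\|\le\sqrt{r}\}}\|x\|^{2}\,Q_{0}(dx)=:\varepsilon(r),
\]
and $\varepsilon(r)\downarrow 0$ as $r\downarrow 0$ since $Q_{0}$ is a L\'evy measure; and, using the inclusion $\{\|x\|^{2}+\|y\|^{2}>r\}\subseteq\{\|x\|>\sqrt{r/2}\}\cup\{\|y\|>\sqrt{r/2}\}$, for every $r>0$,
\[
Q_{0t_{n}}\bigl(\{\|x\|^{2}+\|y\|^{2}>r\}\bigr)\;\le\;2\,Q_{0}\bigl(\{\|x\|>\sqrt{r/2}\}\bigr)=:K(r)<\infty .
\]
Then, for a fixed $\delta>0$, I would write $\int_{D}\|x\|\cdot\|y\|\,Q_{0t_{n}}(dx,dy)=I_{n}(\delta)+J_{n}(\delta)$, the two pieces being the integrals over $D\cap\{\|x\|\cdot\|y\|>\delta\}$ and $D\cap\{\|x\|\cdot\|y\|\le\delta\}$ respectively.

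For $J_{n}(\delta)$, on $D$ one has $\|x\|\cdot\|y\|\le\tfrac12(\|x\|^{2}+\|y\|^{2})$, so on $D\cap\{\|x\|\cdot\|y\|\le\delta\}$ one has $\|x\|\cdot\|y\|\le\min\{\delta,\tfrac12(\|x\|^{2}+\|y\|^{2})\}$; bounding this minimum by $\tfrac12(\|x\|^{2}+\|y\|^{2})$ on $\{\|x\|^{2}+\|y\|^{2}\le r\}$ and by $\delta$ on $\{r<\|x\|^{2}+\|y\|^{2}\le 1\}$ and invoking the two displayed estimates gives $J_{n}(\delta)\le\tfrac12\varepsilon(r)+\delta K(r)$, a bound free of $n$. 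For $I_{n}(\delta)$, since $\|x\|\cdot\|y\|\le\tfrac12$ on $D$, we get $I_{n}(\delta)\le\tfrac12\,Q_{0t_{n}}(\{\|x\|\cdot\|y\|>\delta\})$, which tends to $0$ by hypothesis. The conclusion then follows by fixing $\eta>0$, choosing $r$ with $\tfrac12\varepsilon(r)<\eta/3$, then $\delta$ with $\delta K(r)<\eta/3$, and finally $N$ so that $\tfrac12\,Q_{0t_{n}}(\{\|x\|\cdot\|y\|>\delta\})<\eta/3$ for all $n\ge N$; for such $n$ the whole integral is below $\eta$.

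The step I expect to be the main obstacle is making the bound on $J_{n}(\delta)$ simultaneously small and uniform in $n$: the crude estimate $J_{n}(\delta)\le\tfrac12\int_{D}(\|x\|^{2}+\|y\|^{2})\,Q_{0t_{n}}(dx,dy)$ is uniform but not small, while a pointwise domination argument would give smallness but not uniformity. Separating the region near the origin --- where $\|x\|^{2}+\|y\|^{2}$ is small and is integrated against the uniformly controlled one-coordinate contributions of $Q_{0t_{n}}$ --- from the region away from the origin --- where the explicit factor $\delta$ works against a mass $Q_{0t_{n}}(\{\|x\|^{2}+\|y\|^{2}>r\})$ that is uniformly bounded --- is precisely what resolves this tension, and establishing the two uniform estimates via the projection property of L\'evy measures is the prerequisite that must be in place first.
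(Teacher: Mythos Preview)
Your proof is correct and follows essentially the same approach as the paper's. Both arguments decompose the domain into three pieces --- a small ball near the origin handled by $\|x\|\cdot\|y\|\le\tfrac12(\|x\|^{2}+\|y\|^{2})$ together with the projection identity $Q_{0t_{n}}\circ\pi_{x}^{-1}=Q_{0}$, an annulus of uniformly bounded $Q_{0t_{n}}$-mass on which the product is kept below a small threshold, and the region where the product exceeds that threshold which is controlled by the hypothesis --- the only cosmetic difference being that the paper splits first by radius and then by product size inside the annulus, whereas you split first by product size and then by radius.
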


\subsection{Related results and extensions}
In this section, we present different results which follow from, are related to or extend the theorems presented in the previous section.
\\The first result is a corollary which follows immediately from Theorem \ref{THEOREM}, and states that a multivariate random field is mixing if and only if its components are pairwise mixing.
\begin{co}\label{1of3corollary}
An $\mathbb{R}^{d}$-valued strictly stationary i.d. random field $X=(X_{t})_{t\in\mathbb{R}^{l}}$ with $Q_{0}(\{x=(x_{1},...,x_{d})'\in\mathbb{R}^{d}:\exists j\in\{1,...,d\},x_{j}\in2\pi\mathbb{Z}\})=0$ is mixing if and only if the bivariate random fields $(X^{(j)},X^{(k)})$, $j,k\in\{1,...,d\}$, $j<k$, are all mixing.
\end{co}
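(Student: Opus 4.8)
The plan is to deduce Corollary \ref{1of3corollary} directly from Theorem \ref{THEOREM} by observing that the characterising condition \eqref{MAINeq} involves only pairs of components at a time, together with a remark that the hypothesis on $Q_{0}$ passes to the bivariate marginals. First I would note the elementary fact that the L\'{e}vy measure $Q_{0}^{(j,k)}$ of $\mathcal{L}(X_{0}^{(j)},X_{0}^{(k)})$ is the pushforward of $Q_{0}$ under the projection $\pi_{j,k}:\mathbb{R}^{d}\to\mathbb{R}^{2}$, $x\mapsto(x_{j},x_{k})$ (modulo the usual care about the behaviour near the origin, which is irrelevant here since we only test sets bounded away from $0$). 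Consequently, if $Q_{0}$ charges no point of $\{x:\exists\, i,\ x_{i}\in 2\pi\mathbb{Z}\}$ with zero mass, then in particular $Q_{0}^{(j,k)}(\{(u,v):u\in 2\pi\mathbb{Z}\ \text{or}\ v\in 2\pi\mathbb{Z}\})=0$, so each bivariate field $(X^{(j)},X^{(k)})$ satisfies the hypothesis of Theorem \ref{THEOREM} with $d=2$.

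Next I would make the key observation that condition \eqref{MAINeq} for the full $\mathbb{R}^{d}$-valued field is \emph{literally the same family of conditions} as the union, over all pairs $j<k$ (and the diagonal pairs $j=k$), of condition \eqref{MAINeq} for the bivariate fields $(X^{(j)},X^{(k)})$. Indeed \eqref{MAINeq} only ever refers to $X^{(j)}_{t_{n}}$ and $X^{(k)}_{0}$ for a single ordered pair $(j,k)$ at a time; the diagonal cases $j=k$ are in turn implied by mixing of any bivariate field containing the $j$-th component (or treated directly, since they only involve the univariate marginal $X^{(j)}$). Therefore, applying Theorem \ref{THEOREM} in the ``only if'' direction to $X$ yields \eqref{MAINeq} for all $j,k$, hence for each pair, hence — by the ``if'' direction of Theorem \ref{THEOREM} applied to each bivariate field — each $(X^{(j)},X^{(k)})$ is mixing. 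Conversely, if every $(X^{(j)},X^{(k)})$ with $j<k$ is mixing, then by the ``only if'' direction applied to each of them we recover \eqref{MAINeq} for all off-diagonal pairs, and the diagonal pairs follow from the mixing of, say, $(X^{(1)},X^{(j)})$ (or trivially for $d=1$); assembling these gives \eqref{MAINeq} for the full field, and the ``if'' direction of Theorem \ref{THEOREM} applied to $X$ gives that $X$ is mixing.

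I do not expect any genuine obstacle here: the content is entirely in Theorem \ref{THEOREM}, and the corollary is a bookkeeping exercise. The one point requiring a sentence of care is the claim that the bivariate L\'{e}vy measure is the projection of $Q_{0}$ and hence inherits the no-atoms-in-$2\pi\mathbb{Z}$ property; I would state this as a short lemma-free remark, citing the standard fact (e.g.\ Sato) that marginals of infinitely divisible laws are infinitely divisible with L\'{e}vy measure given by the corresponding pushforward restricted away from the origin. A second, even more minor, point is handling the diagonal indices $j=k$ in \eqref{MAINeq}; I would dispatch these in one line by noting they concern only a single component and are subsumed by mixing of any bivariate field containing that component. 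With those two remarks in place, the proof is two or three lines of ``apply Theorem \ref{THEOREM} forward, then backward.''
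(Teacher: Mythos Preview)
Your proposal is correct and follows exactly the approach the paper takes: the paper's own proof is the single line ``It follows immediately from Theorem~\ref{THEOREM},'' and what you have written is precisely the unpacking of that immediacy. The extra care you take in verifying that the bivariate marginals inherit the $2\pi\mathbb{Z}$-condition and in handling the diagonal indices $j=k$ is sound and makes explicit what the paper leaves implicit.
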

\begin{proof}
It follows immediately from Theorem \ref{THEOREM}.
\end{proof}
\noindent The following corollary is a generalization of Corollary 2.5 of \cite{FuSt}.
\begin{co}\label{corol}
Let $(X_{t})_{t\in\mathbb{R}^{l}}$ be an $\mathbb{R}^{d}$-valued strictly stationary ID random field. Then with the previous notation, $(X_{t})_{t\in\mathbb{R}^{l}}$ is mixing if and only if
\begin{equation}\label{corollarycondition}
\lim\limits_{n\rightarrow\infty}\Big\{\|\Sigma(t_{n})\|+\int_{\mathbb{R}^{2d}}\min (1,\|x\|\cdot\|y\|)Q_{0t_{n}}(dx,dy)\Big\}=0,
\end{equation}
for any $(t_{n})_{n\in\mathbb{N}}\in\mathcal{T}$.
\end{co}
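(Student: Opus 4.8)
The plan is to deduce Corollary \ref{corol} from Theorem \ref{LAST-THEOREM}: I would fix an arbitrary sequence $(t_n)_{n\in\mathbb{N}}\in\mathcal{T}$ and show that condition \eqref{corollarycondition} is equivalent to the conjunction of (MM1) and (MM2'). The first observation is that, since $\Sigma(t_n)$ is a covariance matrix and the integrand $\min(1,\|x\|\cdot\|y\|)$ is non-negative, the bracketed expression in \eqref{corollarycondition} is a sum of two non-negative terms and therefore tends to $0$ if and only if each term does. Because a matrix converges to the zero matrix precisely when its norm converges to $0$, the condition $\|\Sigma(t_n)\|\to 0$ is exactly (MM1), so the whole problem reduces to proving that $\int_{\mathbb{R}^{2d}}\min(1,\|x\|\cdot\|y\|)\,Q_{0t_n}(dx,dy)\to 0$ is equivalent to (MM2'). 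Along the way I would record that this integral is finite for each $n$: near the origin $\|x\|\cdot\|y\|\le\tfrac12(\|x\|^2+\|y\|^2)$ is $Q_{0t_n}$-integrable by the defining property of a L\'evy measure, and away from the origin the integrand is bounded by $1$ while $Q_{0t_n}$ is finite there.

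For the implication (MM2') $\Rightarrow$ integral condition, I would use a three-region split: given $\varepsilon>0$ and a parameter $\delta\in(0,1)$ to be fixed later, decompose $\mathbb{R}^{2d}\setminus\{0\}$ into $R_1=\{0<\|x\|^2+\|y\|^2\le 1\}$, $R_2=\{\|x\|^2+\|y\|^2>1,\ \|x\|\cdot\|y\|\le\delta\}$ and $R_3=\{\|x\|^2+\|y\|^2>1,\ \|x\|\cdot\|y\|>\delta\}$. On $R_1$ the integrand equals $\|x\|\cdot\|y\|$ (as $\|x\|\cdot\|y\|\le\tfrac12<1$ there), so the contribution is exactly $\int_{0<\|x\|^2+\|y\|^2\le 1}\|x\|\cdot\|y\|\,Q_{0t_n}(dx,dy)$, which tends to $0$ by Lemma \ref{LEMMA}. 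On $R_3$ the integrand is at most $1$, so the contribution is at most $Q_{0t_n}(\|x\|\cdot\|y\|>\delta)$, which tends to $0$ by (MM2'). On $R_2$ the integrand is at most $\delta$, and $Q_{0t_n}(R_2)\le Q_{0t_n}(\|x\|^2>\tfrac12)+Q_{0t_n}(\|y\|^2>\tfrac12)=2\,Q_0(\|\cdot\|^2>\tfrac12)=:2C<\infty$, where I use the standard fact that both one-dimensional marginal L\'evy measures of $Q_{0t_n}$ coincide with $Q_0$ (by strict stationarity together with the behaviour of L\'evy measures under linear projection); hence the contribution on $R_2$ is at most $2C\delta$. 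Choosing $\delta$ so that $2C\delta<\varepsilon/3$ and then $n$ large enough makes each of the three pieces $<\varepsilon/3$.

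The reverse implication is a one-line comparison: on $\{\|x\|\cdot\|y\|>\delta\}$ one has $\min(1,\|x\|\cdot\|y\|)\ge\min(1,\delta)>0$, so $\min(1,\delta)\,Q_{0t_n}(\|x\|\cdot\|y\|>\delta)\le\int_{\mathbb{R}^{2d}}\min(1,\|x\|\cdot\|y\|)\,Q_{0t_n}(dx,dy)$, whence $Q_{0t_n}(\|x\|\cdot\|y\|>\delta)\to 0$ for every $\delta>0$, i.e.\ (MM2') holds. Combining the two implications with the reduction in the first paragraph and invoking Theorem \ref{LAST-THEOREM} completes the proof. I expect the only genuinely non-routine point to be the "$\Rightarrow$" direction, where the integral must be controlled simultaneously near the origin — handled by Lemma \ref{LEMMA} — and away from it, the latter requiring the uniform-in-$n$ bound $Q_{0t_n}(\|x\|^2+\|y\|^2>1)\le 2C$ that comes from the marginals of $Q_{0t_n}$ being equal to $Q_0$.
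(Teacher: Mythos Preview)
Your proof is correct. A minor quibble: $\Sigma(t_n)$ is the \emph{cross}-covariance of the Gaussian parts of $X_0$ and $X_{t_n}$, not itself a covariance matrix, but this is irrelevant since all you need is $\|\Sigma(t_n)\|\ge 0$, which holds for any norm.

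Your route for the direction ``mixing $\Rightarrow$ \eqref{corollarycondition}'' differs from the paper's. You go mixing $\Rightarrow$ (MM1) $\wedge$ (MM2') via Theorem~\ref{LAST-THEOREM}, and then show (MM2') $\Rightarrow$ ``integral $\to 0$'' by an elementary three-region split, using Lemma~\ref{LEMMA} near the origin and the marginal identity $Q_{0t_n}(\{\|x\|>a\}\times\mathbb{R}^d)=Q_0(\|\cdot\|>a)$ for the uniform tail bound. The paper instead reaches back into the proof of Theorem~\ref{THEOREM}: it uses the weak convergence $Q_{0t_n}^{(jk)}\big|_{K_\delta^c}\rightharpoonup Q_{jk}\big|_{K_\delta^c}$ established there, together with the fact that each limit measure $Q_{jk}$ is concentrated on the coordinate axes of $\mathbb{R}^2$ and the near-origin estimate \eqref{Epsilon}, to deduce $\int_{\mathbb{R}^2}\min(1,|xy|)\,Q_{0t_n}^{(jk)}(dx,dy)\to 0$ componentwise, and then sums over $j,k$. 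Your argument is more self-contained and modular, relying only on the stated Theorem~\ref{LAST-THEOREM} and Lemma~\ref{LEMMA} rather than on intermediate facts buried inside another proof; the paper's approach, by contrast, makes the structural reason for the vanishing transparent (the limiting L\'evy measure lives on the axes), a fact that is reused later, for instance in the proof of Theorem~\ref{Theorem-ergodicity}.
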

\begin{proof}
We can follow the argument by \cite{FuSt}. To this end, note that if we assume that $(\ref{corollarycondition})$ holds, then conditions $(MM1)$ and $(MM2')$ hold and, thus, Theorem \ref{LAST-THEOREM} implies that $(X_{t})_{t\in\mathbb{R}^{l}}$ is mixing.\\
For the other direction assume that $(X_{t})_{t\in\mathbb{R}^{l}}$ is mixing then by Theorem \ref{LAST-THEOREM} condition $(MM1)$ holds. Furthermore, for every $\delta>0$ with $Q_{jk}(\partial K_{\delta})=0$ and any $j,k=1,...,d$, $(cf. \,\,\,(\ref{TRUNCATION}))$,
\begin{equation}\label{TRUNCATIONNN}
Q_{0t_{n}}^{(jk)}\Big|_{K_{\delta}^{c}}\rightharpoonup Q_{jk}\Big|_{K_{\delta}^{c}} \enspace \enspace as \enspace n\rightarrow\infty.
\end{equation}
for any $(t_{n})_{n\in\mathbb{N}}\in\mathcal{T}$, where the symbol ``$\rightharpoonup$" means convergence in the weak topology. In addition, we know that the L\'{e}vy measures $Q_{jk}$ are concentrated on the axes of $\mathbb{R}^{2}$. Now consider a $\delta>0$ such that conditions $(\ref{Epsilon})$ and $(\ref{TRUNCATIONNN})$ hold. Then we have
\begin{equation*}
\lim\sup\limits_{n\rightarrow\infty}\int_{\mathbb{R}^{2}}\min(1,|xy|)Q_{0t_{n}}^{(jk)}(dx,dy)\leq\epsilon+\lim\sup\limits_{n\rightarrow\infty}\int_{B^{c}_{\delta}}\min(1,|xy|)Q_{0t_{n}}^{(jk)}(dx,dy)=\epsilon.
\end{equation*}
Letting $\epsilon\searrow0$ we obtain that $\lim\sup\limits_{n\rightarrow\infty}\int_{\mathbb{R}^{2}}\min(1,|xy|)Q_{0t_{n}}^{(jk)}(dx,dy)=0$ for any $j,k=1,...,d$. Finally,
\begin{equation*}
\int_{\mathbb{R}^{2d}}\min\left(1,\sum_{k=1}^{d}|x_{k}|\cdot\sum_{j=1}^{d}|y_{j}|\right)Q_{0t_{n}}(dx,dy)\leq \sum_{j,k=1}^{d}\int_{\mathbb{R}^{2d}}\min(1,|x_{k}y_{j}|)Q_{0t_{n}}(dx,dy)
\end{equation*}
\begin{equation*}
=\sum_{j,k=1}^{d}\int_{\mathbb{R}^{2}}\min(1,|xy|)Q_{0t_{n}}^{(jk)}(dx,dy)\rightarrow 0, \quad as \quad n\rightarrow0.
\end{equation*}
Therefore, this implies that
\begin{equation*}
\lim\limits_{n\rightarrow\infty}\int_{\mathbb{R}^{2d}}\min (1,\|x\|\cdot\|y\|)Q_{0t_{n}}(dx,dy),
\end{equation*}
for any $(t_{n})_{n\in\mathbb{N}}\in\mathcal{T}$, hence we obtain that condition $(\ref{corollarycondition})$ is satisfied.
\end{proof}
\noindent The next two results are a reformulation of Theorem \ref{THEOREM}. However, the first requires a short preliminary introduction, which will be useful for Section 4 as well. Recall that a \textit{codifference} $\tau(X_{1},X_{2})$ of an ID real bivariate random vector $(X_{1},X_{2})$ is defined as follows
\begin{equation*}
\tau(X_{1},X_{2}):=\log\mathbb{E}\Big[e^{i(X_{1}-X_{2})}\Big]-\log\mathbb{E}\Big[e^{iX_{1}}\Big]-\log\mathbb{E}\Big[e^{-iX_{2}}\Big],
\end{equation*}
where $\log$ is the distinguished logarithm as defined in $\cite{Sato}$ p. 33.
Following \cite{FuSt} we recall that the \textit{autocodifference} function for an $\mathbb{R}^{d}$-valued strictly stationary ID process $(X_{t})_{t\in\mathbb{R}}$ is defined as $\tau(t)=\left(\tau^{(jk)}(t)\right)_{j,k=1,...,d}$ with $\tau^{(jk)}(t):=\tau\left(X_{0}^{(k)},X_{t}^{(j)}\right)$. For an $\mathbb{R}^{d}$-valued strictly stationary ID random field $(X_{t})_{t\in\mathbb{R}^{l}}$ the \textit{autocodifference field} $\tau(t)$ is defined as $\tau(t)=\left(\tau^{(jk)}(t)\right)_{j,k=1,...,d}$ with $\tau^{(jk)}(t):=\tau\left(X_{0}^{(k)},X_{t}^{(j)}\right)$, where $t\in\mathbb{R}^{l}$.
\begin{co}\label{3of3corollary}
Let $(X_{t})_{t\in\mathbb{R}^{l}}$, with $l\in\mathbb{N}$, be an $\mathbb{R}^{d}$-valued strictly stationary infinite divisible random field such that $Q_{0}$, the L\'{e}vy measure of $\mathcal{L}(X_{0})$, satisfies $Q_{0}(\{x=(x_{1},...,x_{d})'\in\mathbb{R}^{d}:\exists j\in\{1,...,d\},x_{j}\in2\pi\mathbb{Z}\})=0$. Then $(X_{t})_{t\in\mathbb{R}^{l}}$ is mixing if and only if $\tau(t_{n})\rightarrow0$ as $n\rightarrow\infty$ for any sequence $(t_{n})_{n\in\mathbb{N}}\in\mathcal{T}$.
\end{co}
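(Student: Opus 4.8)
The strategy is to show that the condition ``$\tau(t_n)\to 0$'' is equivalent to condition \eqref{MAINeq} of Theorem~\ref{THEOREM}, from which the corollary is immediate. Recall that $\tau^{(jk)}(t)=\log\mathbb{E}[e^{i(X_0^{(k)}-X_t^{(j)})}]-\log\mathbb{E}[e^{iX_0^{(k)}}]-\log\mathbb{E}[e^{-iX_t^{(j)}}]$, and note that since $(X_0^{(k)},X_t^{(j)})$ is a bivariate ID random vector the three characteristic functions appearing here never vanish, so the distinguished logarithms are well defined. Throughout, $(t_n)_{n\in\mathbb{N}}$ denotes an arbitrary sequence in $\mathcal{T}$.

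First I would prove that $\tau(t_n)\to 0$ implies mixing. Applying the (continuous) exponential map and using that $\exp(\log w)=w$ for the distinguished logarithm, the vanishing of $\tau^{(jk)}(t_n)$ gives $\mathbb{E}[e^{i(X_0^{(k)}-X_{t_n}^{(j)})}]\to\mathbb{E}[e^{iX_0^{(k)}}]\,\mathbb{E}[e^{-iX_{t_n}^{(j)}}]$, and by strict stationarity $\mathbb{E}[e^{-iX_{t_n}^{(j)}}]=\mathbb{E}[e^{-iX_0^{(j)}}]$. Taking complex conjugates turns this into $\mathbb{E}[e^{i(X_{t_n}^{(j)}-X_0^{(k)})}]\to\mathbb{E}[e^{iX_0^{(j)}}]\,\mathbb{E}[e^{-iX_0^{(k)}}]$, which is exactly \eqref{MAINeq}. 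As this holds for every $j,k$ and every $(t_n)\in\mathcal{T}$, Theorem~\ref{THEOREM} (whose hypothesis on the atoms of $Q_0$ is in force here) yields that $(X_t)_{t\in\mathbb{R}^l}$ is mixing.

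For the converse I would work with the L\'evy--Khintchine representation of the codifference. Writing $\psi_n$ for the cumulant transform of the bivariate ID vector $(X_0^{(k)},X_{t_n}^{(j)})$, one has $\tau^{(jk)}(t_n)=\psi_n(1,-1)-\psi_n(1,0)-\psi_n(0,-1)$ (the distinguished logarithms in the definition of $\tau$ coincide with the values of $\psi_n$ along the obvious linear paths), and a direct computation shows that the drift and compensator contributions cancel, that the Gaussian part contributes a term bounded in modulus by $\|\Sigma(t_n)\|$ (it is the off-diagonal entry of the Gaussian covariance of $(X_0^{(k)},X_{t_n}^{(j)})$), and that the jump part equals $\int_{\mathbb{R}^{2d}}(e^{ix_k}-1)(e^{-iy_j}-1)\,Q_{0t_n}(dx,dy)$. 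Using $|e^{is}-1|\le 2\min(1,|s|)$, $|x_k|\le\|x\|$, $|y_j|\le\|y\|$ and the elementary bound $\min(1,a)\min(1,b)\le\min(1,ab)$ for $a,b\ge 0$, this yields
\[
\bigl|\tau^{(jk)}(t_n)\bigr|\le\|\Sigma(t_n)\|+4\int_{\mathbb{R}^{2d}}\min\bigl(1,\|x\|\cdot\|y\|\bigr)\,Q_{0t_n}(dx,dy).
\]
If $(X_t)_{t\in\mathbb{R}^l}$ is mixing, then Corollary~\ref{corol} shows the right-hand side tends to $0$, so $\tau^{(jk)}(t_n)\to 0$ for all $j,k$, i.e.\ $\tau(t_n)\to 0$.

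The main thing to get right is the L\'evy--Khintchine identity for $\tau^{(jk)}$ --- both the cancellations and the bookkeeping with the distinguished logarithm justifying $\tau^{(jk)}(t_n)=\psi_n(1,-1)-\psi_n(1,0)-\psi_n(0,-1)$; once that representation is available, the rest is a short estimate together with the already-established Theorem~\ref{THEOREM} and Corollary~\ref{corol}. An alternative route for the converse avoids the explicit formula: mixing forces $(X_0^{(k)},X_{t_n}^{(j)})$ to converge weakly to a vector with independent components distributed as $X_0^{(k)}$ and $X_0^{(j)}$, so the cumulant transforms converge uniformly on compacts and hence $\tau^{(jk)}(t_n)\to\tau(X_0^{(k)},\widetilde{X}_0^{(j)})=0$; this, however, relies on the (standard) continuity of the distinguished logarithm under weak convergence of ID laws, which is precisely the subtlety the first route bypasses.
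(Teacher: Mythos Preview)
Your proposal is correct. The paper's own proof is a single line, ``It follows immediately from Theorem~\ref{THEOREM}'', so your forward direction ($\tau(t_n)\to 0$ implies \eqref{MAINeq} by exponentiation, stationarity, and conjugation) is exactly what the authors have in mind. For the converse, the paper presumably intends the reader simply to take distinguished logarithms in \eqref{MAINeq}; you correctly flag that this step is not entirely automatic (convergence of $e^{w_n}$ does not by itself force convergence of $w_n$), and you supply a clean workaround via the explicit L\'evy--Khintchine formula for $\tau^{(jk)}$ together with Corollary~\ref{corol}. This is a genuine, if minor, improvement in rigor over the paper's one-liner; note that the explicit formula you derive, $\tau^{(jk)}(t_n)=\sigma_{t_n}^{jk}+\int(e^{ix}-1)\overline{(e^{iy}-1)}\,Q_{0t_n}^{jk}(dx,dy)$, is in fact written out later in the paper (Section~4, just before Proposition~\ref{pro-ergodicity}), so the authors certainly have it available. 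Your alternative route through weak convergence of ID laws and continuity of the cumulant transform is also valid and is essentially Sato's Lemma~7.7, already invoked elsewhere in the paper.
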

\begin{proof}
It follows immediately from Theorem \ref{THEOREM}.
\end{proof}
\begin{co}\label{norm-corollary}
Let $(X_{t})_{t\in\mathbb{R}^{l}}$, with $l\in\mathbb{N}$, be an $\mathbb{R}^{d}$-valued strictly stationary infinite divisible random field such that $Q_{0}$, the L\'{e}vy measure of $\mathcal{L}(X_{0})$, satisfies $Q_{0}(\{x=(x_{1},...,x_{d})'\in\mathbb{R}^{d}:\exists j\in\{1,...,d\},x_{j}\in2\pi\mathbb{Z}\})=0$. Then $(X_{t})_{t\in\mathbb{R}^{l}}$ is mixing if and only if
\begin{equation}\label{extension}
\lim\limits_{\|t\|\rightarrow\infty}\mathbb{E}\left[e^{i(X_{t}^{(j)}-X_{0}^{(k)})}\right]=\mathbb{E}\left[e^{iX_{0}^{(j)}}\right]\cdot\mathbb{E}\left[e^{-iX_{0}^{(k)}}\right],
\end{equation}
for any $j,k=1,..,d$, where $\|\cdot\|$ is any norm on $\mathbb{R}^{l}$ (e.g.~the sup or the Euclidean norm) and $t\in\mathbb{R}^{l}$.
\end{co}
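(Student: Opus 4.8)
The plan is to obtain this as a direct reformulation of Theorem \ref{THEOREM}, the only work being to translate the sequential formulation in terms of $\mathcal{T}$ into a ``limit at infinity'' formulation, and then to replace the sup norm by an arbitrary norm using equivalence of norms on $\mathbb{R}^{l}$.

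First I would record the elementary topological fact that for any function $f\colon\mathbb{R}^{l}\to\mathbb{C}$ and any $L\in\mathbb{C}$ one has $\lim_{\|t\|_{\infty}\to\infty}f(t)=L$ if and only if $f(t_{n})\to L$ for every sequence $(t_{n})_{n\in\mathbb{N}}\subset\mathbb{R}^{l}$ with $\|t_{n}\|_{\infty}\to\infty$. The forward implication is immediate; for the converse, if $f(t)\not\to L$ as $\|t\|_{\infty}\to\infty$ then there exist $\varepsilon>0$ and points $t_{n}$ with $\|t_{n}\|_{\infty}\geq n$ and $|f(t_{n})-L|\geq\varepsilon$, which produces a sequence in $\mathcal{T}$ along which $f$ does not converge to $L$, a contradiction. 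Applying this with $f(t)=\mathbb{E}[e^{i(X_{t}^{(j)}-X_{0}^{(k)})}]$ and $L=\mathbb{E}[e^{iX_{0}^{(j)}}]\cdot\mathbb{E}[e^{-iX_{0}^{(k)}}]$ for each fixed pair $j,k$, the validity of $(\ref{MAINeq})$ for all $(t_{n})_{n\in\mathbb{N}}\in\mathcal{T}$ is precisely the validity of $(\ref{extension})$ with $\|\cdot\|=\|\cdot\|_{\infty}$.

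Second I would upgrade the sup norm to an arbitrary norm. Since $\mathbb{R}^{l}$ is finite-dimensional, any two norms on it are equivalent, so for a given norm $\|\cdot\|$ there are constants $0<c\leq C<\infty$ with $c\|t\|_{\infty}\leq\|t\|\leq C\|t\|_{\infty}$ for all $t\in\mathbb{R}^{l}$. Hence $\|t\|\to\infty$ if and only if $\|t\|_{\infty}\to\infty$, and in particular $(t_{n})_{n\in\mathbb{N}}\in\mathcal{T}$ if and only if $\|t_{n}\|\to\infty$. Consequently the limit in $(\ref{extension})$ taken along $\|t\|\to\infty$ exists and equals $L$ exactly when the corresponding limit along $\|t\|_{\infty}\to\infty$ does, which by the previous paragraph is equivalent to $(\ref{MAINeq})$ holding for every sequence in $\mathcal{T}$. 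By Theorem \ref{THEOREM} this is in turn equivalent to mixing of $(X_{t})_{t\in\mathbb{R}^{l}}$, which completes the argument.

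I do not anticipate any real obstacle here; the only point requiring a little care is distinguishing the norm appearing in the definition of $\mathcal{T}$ (the sup norm) from the arbitrary norm in $(\ref{extension})$, and this is dispatched by the equivalence-of-norms remark above.
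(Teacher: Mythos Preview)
Your proposal is correct and follows essentially the same route as the paper: both proofs invoke Theorem~\ref{THEOREM}, pass between the sequential condition over $\mathcal{T}$ and the limit $\|t\|_{\infty}\to\infty$ via the standard sequential characterization of limits, and then use equivalence of norms on $\mathbb{R}^{l}$ to replace the sup norm by an arbitrary norm. Your write-up is in fact slightly more explicit about the contradiction argument for the sequential characterization, but the underlying strategy is identical.
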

\begin{proof}
``$\Rightarrow$": Assume that $(X_{t})_{t\in\mathbb{R}^{l}}$ is mixing. Then by Theorem $\ref{THEOREM}$ we know that
\begin{equation}\label{extensionn}
\lim\limits_{n\rightarrow\infty}\mathbb{E}\left[e^{i(X_{t_{n}}^{(j)}-X_{0}^{(k)})}\right]=\mathbb{E}\left[e^{iX_{0}^{(j)}}\right]\cdot\mathbb{E}\left[e^{-iX_{0}^{(k)}}\right]
\end{equation}
holds for any $j,k=1,..,d$ and for any sequence $(t_{n})_{n\in\mathbb{N}}\in\mathcal{T}$. Now consider the following simple result.
\\ Let $M_{1}=(A_{1},d_{1})$ and $M_{2}=(A_{2},d_{2})$ be two metric spaces. Let $S\subseteq A_{1}$ be an open set of $M_{1}$. Let $f$ be a mapping defined on $S$. Then $\lim\limits_{x\rightarrow c}f(x)=l$ iff for any sequence $(x_{n})_{n\in\mathbb{N}}$ of points in $S$ such that $\forall n\in\mathbb{N}:$ $x_{n\neq c}$ and $\lim\limits_{n\rightarrow\infty}x_{n}=c$ we have $\lim\limits_{n\rightarrow \infty}f(x_{n})=l$.
\\ From this result and from the fact that we are considering any sequence such that $\lim\limits_{n\rightarrow\infty}\|t_{n}\|_{\infty}=\infty$, we obtain equation $(\ref{extension})$.
\\ ``$\Leftarrow$": Assume that $(\ref{extension})$ holds. Then we have that $(\ref{extensionn})$ holds by the result stated above. Then by Theorem $\ref{THEOREM}$ we obtain that $(X_{t})_{t\in\mathbb{R}^{l}}$ is mixing.
\\ Now consider the set $\mathcal{E}:=\left\{(t_{n})_{n\in\mathbb{N}}\subset\mathbb{R}^{l}:\lim\limits_{n\rightarrow\infty}\|t_{n}\|=\infty, \textnormal{where }\|\cdot\|\textnormal{ is any norm on }\mathbb{R}^{l}  \right\}$. Notice that any sequence $(t_{n})_{n\in\mathbb{N}}\in\mathcal{T}$ belongs to $\mathcal{E}$ and vice versa, because on the finite dimensional vector space $\mathbb{R}^{l}$ any norm $\|\cdot\|_{a}$ is equivalent to any other norm $\|\cdot\|_{b}$. Hence, we obtain that equation $(\ref{extensionn})$ holds for any $(t_{n})_{n\in\mathbb{N}}\in\mathcal{E}$, and by applying the argument above we obtain our result. 
\end{proof}
\begin{rem}
It is possible to see that the extension that we have done in the above corollary, can be applied to all our results that holds for ``any sequence $(t_{n})_{n\in\mathbb{N}}\in\mathcal{T}$".
\end{rem}
\noindent The next result is a multivariate and random field extension of Theorem 2 of Rosinski and Zak \cite{RoZa} and it will help us to generalise Theorem \ref{THEOREM}.
\begin{thm}\label{atom}
Let $(X_{t})_{t\in\mathbb{R}^{l}}$ be an $\mathbb{R}^{d}$-valued stationary ID random field such that $Q_{0}$, the L\'{e}vy measure of $X_{0}$, satisfies $Q_{0}(\{x=(x_{1},...,x_{d})'\in\mathbb{R}^{d}:\exists j\in\{1,...,d\},x_{j}\in2\pi\mathbb{Z}\})\neq 0$. In other words, $Q_{0}$ has atoms in this set. Let
\begin{equation*}
Z=\{z=(z_{1},...,z_{j})\in\mathbb{R}^{d}: z_{j}=2\pi k/y_{j}\enspace\forall j\in\{1,..,d\},\textnormal{ where }k\in\mathbb{Z}\textnormal{ and $y=(y_{1},...,y_{j})$ is an atom of $Q_{0}$}\}.
\end{equation*}
Then $(X_{t})_{t\in\mathbb{R}^{l}}$ is mixing if and only if for some $a=(a_{1},...,a_{d})\in\mathbb{R}^{d}\setminus Z$, with $a_{p}\neq 0$ for $p=1,..,d$,
\begin{equation*}
\lim\limits_{n\rightarrow\infty}\mathbb{E}\left[e^{i(a_{j}X_{t_{n}}^{(j)}-a_{k}X_{0}^{(k)})}\right]=\mathbb{E}\left[e^{ia_{j}X_{0}^{(j)}}\right]\cdot\mathbb{E}\left[e^{-ia_{k}X_{0}^{(k)}}\right],
\end{equation*}
for any $j,k=1,..,d$ and for any sequence $(t_{n})_{n\in\mathbb{N}}\in\mathcal{T}$.
\end{thm}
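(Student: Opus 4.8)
The plan is to deduce the statement from Theorem~\ref{THEOREM} by passing to a componentwise rescaling of the field. Fix $a=(a_1,\dots,a_d)\in\mathbb{R}^d\setminus Z$ with $a_p\neq0$ for every $p=1,\dots,d$, put $A:=\mathrm{diag}(a_1,\dots,a_d)$, and set $Y_t:=AX_t$ for $t\in\mathbb{R}^l$. Since $A$ is a fixed invertible linear map and $X_t=X_0\circ\theta_t$, we get $Y_t=(AX_0)\circ\theta_t=Y_0\circ\theta_t$, so $(Y_t)_{t\in\mathbb{R}^l}$ is again an $\mathbb{R}^d$-valued strictly stationary random field, and it is infinitely divisible because linear images of ID random vectors are ID. Moreover, invertibility of $A$ gives $\sigma_Y=\sigma_X$ while the underlying $\mathbb{R}^l$-action $(\theta_t)$ is unchanged; since mixing of the field depends on the pair $\big(\sigma_X,(\theta_t)\big)$ only through the requirement $\mathbb{P}(C\cap\theta_{t_n}(D))\to\mathbb{P}(C)\mathbb{P}(D)$ for $C,D\in\sigma_X$ and $(t_n)\in\mathcal{T}$, we conclude that $(X_t)_{t\in\mathbb{R}^l}$ is mixing if and only if $(Y_t)_{t\in\mathbb{R}^l}$ is mixing.

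Next I would compute the L\'evy measure of $Y_0=AX_0$: it is the push-forward $Q_0^A:=Q_0\circ A^{-1}$, whose atoms are exactly the points $Ay$ with $y$ an atom of $Q_0$. The crucial observation is that $a\notin Z$ (together with $a_p\neq0$ for all $p$) forces $a_jy_j\notin2\pi\mathbb{Z}$ for every atom $y$ of $Q_0$ and every $j\in\{1,\dots,d\}$, so that no atom of $Q_0$ is mapped by $A$ into the set $\{x\in\mathbb{R}^d:\exists j,\ x_j\in2\pi\mathbb{Z}\}$; combined with the standing hypothesis that the mass of $Q_0$ on that set is carried by its atoms, this yields
\[
Q_0^A\big(\{x=(x_1,\dots,x_d)'\in\mathbb{R}^d:\exists j\in\{1,\dots,d\},\ x_j\in2\pi\mathbb{Z}\}\big)=0 .
\]
Thus $(Y_t)_{t\in\mathbb{R}^l}$ meets the hypothesis of Theorem~\ref{THEOREM}, which tells us that $(Y_t)_{t\in\mathbb{R}^l}$ is mixing if and only if $\lim_{n\to\infty}\mathbb{E}\big[e^{i(Y_{t_n}^{(j)}-Y_0^{(k)})}\big]=\mathbb{E}\big[e^{iY_0^{(j)}}\big]\,\mathbb{E}\big[e^{-iY_0^{(k)}}\big]$ for all $j,k=1,\dots,d$ and all $(t_n)_{n\in\mathbb{N}}\in\mathcal{T}$. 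Substituting $Y_t^{(j)}=a_jX_t^{(j)}$ transforms this equality into exactly the condition displayed in the theorem.

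Putting the two steps together gives the implication ``$\Leftarrow$'' for the prescribed $a$. For ``$\Rightarrow$'' one still has to know that an admissible $a$ exists: the set $Z$ and the $d$ coordinate hyperplanes $\{a:a_p=0\}$ are each Lebesgue-null, so their union does not exhaust $\mathbb{R}^d$; picking $a$ in the complement, $(X_t)$ mixing implies $(Y_t)$ mixing implies, by Theorem~\ref{THEOREM} applied to $Y$ as above, the rescaled limit relation. I expect the only genuine obstacle to be the middle step, namely carefully checking that membership $a\in\mathbb{R}^d\setminus Z$ is precisely what is needed to push $Q_0$ off the exceptional set --- tracking how atoms of a L\'evy measure transform under the diagonal map $A$ and making sure the rescaling does not inadvertently move an atom \emph{into} $\{x:\exists j,\ x_j\in2\pi\mathbb{Z}\}$. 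The remaining ingredients (preservation of infinite divisibility and strict stationarity under $A$, the identity $\sigma_Y=\sigma_X$, and the push-forward formula for the L\'evy measure) are routine.
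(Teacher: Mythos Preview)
Your strategy coincides with the paper's: introduce the diagonal rescaling $Y_t=M_aX_t$, argue that $(X_t)$ is mixing iff $(Y_t)$ is (you do this via $\sigma_Y=\sigma_X$ and the set-theoretic definition of mixing, the paper via the characteristic-function formulation in Definition~\ref{definitionMIX}), push the L\'evy measure forward by $M_a$, verify that the rescaled measure no longer charges the exceptional set, and invoke Theorem~\ref{THEOREM}. For the existence of an admissible $a$ the paper observes that $Z$ is countable (countably many atoms of a $\sigma$-finite measure), whereas you argue Lebesgue-nullness; either suffices.

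One point deserves a flag: there is no ``standing hypothesis that the mass of $Q_0$ on that set is carried by its atoms.'' The theorem only assumes $Q_0(\{x:\exists j,\,x_j\in2\pi\mathbb{Z}\})\neq0$; the phrase ``in other words, $Q_0$ has atoms in this set'' in the statement is an informal gloss, not an additional assumption, and for $d\geq2$ the two are not equivalent since the set in question is a countable union of hyperplanes rather than a countable set of points. Your argument---exactly like the paper's---only relocates the \emph{atoms} of $Q_0$ away from that set, so if $Q_0$ happened to carry diffuse mass on one of those hyperplanes the conclusion $Q_0^A(\{x:\exists j,\,x_j\in2\pi\mathbb{Z}\})=0$ would not follow from a diagonal rescaling alone. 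This gap is shared with the paper's own proof, so it does not count against your reconstruction of the intended argument; just do not present it as a stated hypothesis.
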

\begin{proof}Consider an element $a\in\mathbb{R}^{d}\setminus Z$ with $a_{p}\neq 0$ for $p=1,..,d$. We know that the set of atoms of any $\sigma$-finite measure is  a countable set (the proof is straightforward) and that any L\'{e}vy measure is $\sigma$-finite. Hence, the set of atoms of $Q_{0}$ is countable, which implies that $Z$ is countable. This implies that our $a$ exists. Now, let 
\begin{equation*}M_{a}:=
\begin{pmatrix}
  a_{1} & 0 &\cdot\cdot\cdot& 0  \\
  0 & a_{2} &\cdot\cdot\cdot& 0\\
  \vdots & \vdots & \ddots&\vdots\\
  0 & 0&\dots& a_{d}
 \end{pmatrix}.
\end{equation*}
Notice that $M_{a}$ is an invertible $d\times d$ matrix and $X_{t}$ is a $d$-dimensional column vector. We have that $(X_{t})_{t\in\mathbb{R}^{l}}$ is mixing if and only if $(M_{a}X_{t})_{t\in\mathbb{R}^{l}}$ is mixing. This is because by looking at the Definition \ref{definitionMIX} it is enough to show that for every $m\in\mathbb{N},$ $\lambda=(s_{1},...,s_{m})'\in\mathbb{R}^{ml}$ and $\theta=(\theta_{1},...,\theta_{m})'\in\mathbb{R}^{md}$ we have $\langle \theta,M_{a}\star X_{\lambda}\rangle=\langle \tilde{\theta},X_{\lambda}\rangle$, where $M_{a}\star X_{\lambda}:=(M_{a}X_{s_{1}},...,M_{a}X_{s_{m}})'$ and $\tilde{\theta}\in\mathbb{R}^{md}$. Notice that for $m=1$ we have $M_{a}\star X_{t}:=M_{a} X_{t}$, $t\in\mathbb{R}^{l}$. Indeed, we have $\langle \theta,M_{a}\star X_{\lambda}\rangle=\sum_{j=1}^{d}\sum_{k=1}^{m}a_{j}X_{s_{k}}^{(j)}\theta_{jk}=\langle M_{a}\star\theta, X_{\lambda}\rangle =\langle \tilde{\theta},X_{\lambda}\rangle$, where $ M_{a}\star\theta:=(M_{a}\theta_{1},...,M_{a}\theta_{m})'\in\mathbb{R}^{md}$.\\
Now, the L\'{e}vy measure $Q^{a}_{0}$ of $M_{a}X_{0}$ is given by $Q^{a}_{0}(\cdot)=Q_{0}(M_{a}^{-1}(\cdot))$ (see Proposition 11.10 of \cite{Sato}). Since $a\notin Z$, $Q^{a}_{0}$ has no atoms in the set $\{x=(x_{1},...,x_{d})'\in\mathbb{R}^{d}:\exists j\in\{1,...,d\},x_{j}\in2\pi\mathbb{Z}\}$. This is because
\begin{equation*}
Q^{a}_{0}(\{x=(x_{1},...,x_{d})'\in\mathbb{R}^{d}:\exists j\in\{1,...,d\},x_{j}\in2\pi\mathbb{Z}\})
\end{equation*}
\begin{equation*}
=Q_{0}(\{x=(x_{1},...,x_{d})'\in\mathbb{R}^{d}:\exists j\in\{1,...,d\},x_{j}\in 2\pi\mathbb{Z}/a_{j}\}),
\end{equation*}
since $a\notin Z$ then $\exists j\in\{1,...,d\}:a_{j}\neq 2\pi k/y_{j}$ for any $k\in\mathbb{Z}$ and any atom $y$ of $Q_{0}$, hence
\begin{equation*}
=Q_{0}(\{x=(x_{1},...,x_{d})'\in\mathbb{R}^{d}:\exists j\in\{1,...,d\}\textnormal{ such that }x_{j}\neq y_{j},\textnormal{ where y is any atom of $Q_{0}$}\})=0.
\end{equation*}
The equality to zero comes from the fact that the set considered has no intersection with the set of atoms of the measure $Q_{0}$. Finally, by using Theorem \ref{THEOREM} the proof is complete.
\end{proof}
\noindent From this result we have the following generalisation of Theorem \ref{THEOREM}.
\begin{co}\label{law-corollary}
Let $(X_{t})_{t\in\mathbb{R}^{l}}$ be an $\mathbb{R}^{q}$-valued stationary ID random field. Then $(X_{t})_{t\in\mathbb{R}^{l}}$ is mixing if and only if $\mathcal{L}(X_{t_{n}}-X_{0})\stackrel{n\rightarrow\infty}{\rightarrow}\mathcal{L}(X_{0}-X_{0}')$ for any $j,k=1,..,d$ and for any sequence $(t_{n})_{n\in\mathbb{N}}\in\mathcal{T}$, where $X_{0}'$ is an independent copy of $X_{0}$.
\end{co}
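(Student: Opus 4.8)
The plan is to treat the two implications separately, the forward one being routine and the converse carrying all the content. For the forward direction, assume $(X_t)_{t\in\mathbb{R}^l}$ is mixing. Specialising Definition~\ref{definitionMIX} to $m=1$, $\lambda=\mu=0$, $\theta_1=-\theta$ and $\theta_2=\theta$ gives, for every $\theta$ and every $(t_n)\in\mathcal{T}$,
\[
\mathbb{E}\!\left[e^{i\langle\theta,X_{t_n}-X_0\rangle}\right]\longrightarrow \mathbb{E}\!\left[e^{i\langle\theta,X_0\rangle}\right]\mathbb{E}\!\left[e^{-i\langle\theta,X_0\rangle}\right]=\mathbb{E}\!\left[e^{i\langle\theta,X_0-X_0'\rangle}\right],
\]
and since $\theta$ is arbitrary, Lévy's continuity theorem yields $\mathcal{L}(X_{t_n}-X_0)\to\mathcal{L}(X_0-X_0')$.

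For the converse, suppose $\mathcal{L}(X_{t_n}-X_0)\to\mathcal{L}(X_0-X_0')$ for every $(t_n)\in\mathcal{T}$. I would aim at the Maruyama-type criterion of Theorem~\ref{LAST-THEOREM}, i.e.\ at verifying $(MM1)$ $\Sigma(t_n)\to0$ and $(MM2')$ $Q_{0t_n}(\|x\|\cdot\|y\|>\delta)\to0$; this route is convenient because those two conditions involve the \emph{full} joint Gaussian cross-covariance and the \emph{full} joint Lévy measure of $(X_0,X_{t_n})$, so that neither an atom hypothesis (as in Theorem~\ref{THEOREM}) nor any distinction between ``diagonal'' and ``cross'' coordinate pairs is needed. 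The input is the convergence of the characteristic triplet of $X_{t_n}-X_0$ to that of $X_0-X_0'$ (Gnedenko--Kolmogorov; see \cite{Sato}, Thm.~8.7): the Lévy measure $\rho_n$ of $X_{t_n}-X_0$ converges, on sets bounded away from the origin, to $Q_0+\widetilde{Q}_0$ (with $\widetilde{Q}_0(\cdot)=Q_0(-\cdot)$), which is precisely the push-forward of $Q_{0t_n}$ under $(x,y)\mapsto y-x$ one would obtain if $X_0$ and $X_{t_n}$ were independent, and the Gaussian-plus-truncated-jumps part of $X_{t_n}-X_0$, namely $2A-\Sigma(t_n)-\Sigma(t_n)^{\top}$ (with $A$ the Gaussian part of $X_0$) plus a small-jump correction, converges to the analogous quantity for $X_0-X_0'$. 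Feeding in strict stationarity — so that $t\mapsto\Sigma(t)$ is a matrix covariance function with $\Sigma(-t)=\Sigma(t)^{\top}$, and $Q_{0t}$ inherits a harmonizable/spectral representation from the spectral representation of the infinitely divisible stationary field — one upgrades the (a priori only symmetric-part) information carried by the difference condition to the full statements $(MM1)$ and $(MM2')$, after which Theorem~\ref{LAST-THEOREM} closes the argument; alternatively, once $(MM1)$ and $(MM2')$ are secured in the atom-free case one may invoke Theorem~\ref{THEOREM}, and in the general case Theorem~\ref{atom} together with the observation that the difference hypothesis is preserved under $X_t\mapsto M_aX_t$.

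The main obstacle is exactly this upgrading step. The difference condition constrains only functionals $\langle\theta,X_{t_n}-X_0\rangle$, i.e.\ ``same coordinate, same time pattern'' differences, and says nothing directly about the off-diagonal/antisymmetric part of the coupling of $X_0$ and $X_{t_n}$; indeed, abstractly an infinitely divisible (e.g.\ Gaussian) pair whose cross-covariance is a nonzero antisymmetric matrix satisfies $V-U\stackrel{d}{=}X_0-X_0'$ without being anywhere near independent, so the claim is false for general pairs and stationarity must genuinely be exploited. Concretely, for the Gaussian part one writes $\Sigma(t)=\int_{\mathbb{R}^l}e^{i\langle t,\xi\rangle}G(d\xi)$ with $G$ Hermitian positive semidefinite and $G(-d\xi)=G(d\xi)^{\top}$; the difference condition forces $\int 2\cos\langle t,\xi\rangle\,G(d\xi)\to0$ as $\|t\|\to\infty$, whence (i)~$G$ has no atoms, (ii)~each diagonal $G_{jj}$ is a Rajchman measure, and (iii)~by positive semidefiniteness $|G_{jk}|$ is dominated by $\tfrac12(G_{jj}+G_{kk})$, hence Rajchman as well, so that $\Sigma(t)\to0$. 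An analogous spectral argument — combined with Lemma~\ref{LEMMA} and a careful treatment of the possible accumulation of small jumps in the triplet limit — is what one needs to pass from the convergence of the push-forward of $Q_{0t_n}$ to $(MM2')$; this is the technically heaviest part and is the natural candidate for a detailed appendix treatment.
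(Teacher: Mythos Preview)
Your forward direction is fine and coincides with the paper's (trivial) argument. The converse, however, is where you diverge sharply from the paper, and you have made the problem far harder than it is.

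The paper treats the corollary as an \emph{immediate} consequence of Theorem~\ref{THEOREM}, Corollary~\ref{1of3corollary} and Theorem~\ref{atom}, and the reason is this: the hypothesis $\mathcal{L}(X_{t_n}-X_0)\to\mathcal{L}(X_0-X_0')$ yields, by taking $\theta=e_j$, exactly the \emph{diagonal} instances
\[
\lim_{n\to\infty}\mathbb{E}\!\left[e^{i(X_{t_n}^{(j)}-X_0^{(j)})}\right]=\mathbb{E}\!\left[e^{iX_0^{(j)}}\right]\mathbb{E}\!\left[e^{-iX_0^{(j)}}\right],\qquad j=1,\dots,d.
\]
The key observation you missed is that the $L^2$--argument already contained in the proof of Theorem~\ref{THEOREM} upgrades the diagonal case to the full condition~(\ref{MAINeq}) for all pairs $(j,k)$. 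Indeed, for fixed $j$ the space $H_0=\mathrm{lin}\{1,e^{iX_t^{(j)}}:t\in\mathbb{R}^l\}$ is built using only the $j$-th coordinate, and verifying~(\ref{complex}) on $H_0$ uses only the diagonal relation (via stationarity, $\mathbb{E}[e^{iX_{t_n}^{(j)}}\overline{e^{iX_s^{(j)}}}]=\mathbb{E}[e^{i(X_{t_n-s}^{(j)}-X_0^{(j)})}]$). Once~(\ref{complex}) holds for all $Y\in L^2$, plugging in $Y=e^{\pm iX_0^{(k)}}$ for \emph{any} $k$ gives both~(\ref{MAINeq}) and~(\ref{POSITIVE}) for every pair $(j,k)$, and the rest of the proof of Theorem~\ref{THEOREM} runs unchanged. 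The atom restriction is then removed exactly as you note at the end: $X\mapsto M_aX$ preserves the difference hypothesis, so Theorem~\ref{atom} applies.

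Your spectral/Rajchman programme for $(MM1)$ is plausible (the domination $|G_{jk}|\le\tfrac12(G_{jj}+G_{kk})$ together with the fact that measures absolutely continuous with respect to a Rajchman measure are Rajchman would do it), but it is unnecessary machinery here, and for $(MM2')$ you yourself concede that the argument is only a ``natural candidate for a detailed appendix treatment'' --- in other words, not a proof. The paper avoids this entire layer: there is no need to recover the antisymmetric part of $\Sigma(t)$ or to analyse push-forwards of $Q_{0t_n}$, because the Hilbert-space weak-convergence trick converts the diagonal characteristic-function condition directly into the full one.
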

\begin{proof}
This result is an immediate consequence of Theorem \ref{THEOREM}, Corollary \ref{1of3corollary} and Theorem \ref{atom}.
\end{proof}
\noindent We end this section with a simple general result which will also be useful for the next section.
\begin{pro}\label{proMMA} Let $(X_{t})_{t\in\mathbb{R}^{l}}$ be a linear combination of independent, stationary, ID and mixing random fields. In other words, let $r\in\mathbb{N}$ and let $(X_{t})_{t\in\mathbb{R}^{l}}\stackrel{d}{=}(\sum_{k=1}^{r}Y^{k}_{t})_{t\in\mathbb{R}^{l}}$, where $( Y^{k}_{t})_{t\in\mathbb{R}^{l}}$, $k=1,...,r$, are independent $\mathbb{R}^{q}$-valued stationary, ID and mixing random fields. Then $(X_{t})_{t\in\mathbb{R}^{l}}$ is stationary, ID and mixing.
\end{pro}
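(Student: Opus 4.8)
The plan is to reduce everything to a statement about finite-dimensional distributions and then exploit independence. Since mixing (in the sense of Definition~\ref{definitionMIX}), strict stationarity and infinite divisibility of a random field are all properties of its finite-dimensional laws, we may assume without loss of generality that $X_t=\sum_{k=1}^r Y^k_t$ pointwise. If the ``linear combination'' carries scalar coefficients $c_k$, absorb each $c_k$ into $Y^k$: scaling a stationary, ID, mixing field by a constant preserves all three properties, because the characteristic functional of $c_kY^k$ is that of $Y^k$ evaluated at a rescaled argument (so that, in particular, \eqref{MIX} for $Y^k$ implies \eqref{MIX} for $c_kY^k$).

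First I would dispatch stationarity and infinite divisibility. For any $m$ and any $s_1,\dots,s_m\in\mathbb{R}^l$, the law of $(X_{s_1}',\dots,X_{s_m}')'$ is the convolution of the laws of $(Y^k_{s_1}',\dots,Y^k_{s_m}')'$, $k=1,\dots,r$, by independence of the $Y^k$. Each of these laws is shift-invariant (stationarity of $Y^k$) and infinitely divisible (ID of $Y^k$), and convolution preserves both properties; hence every finite-dimensional law of $X$ is shift-invariant and ID, i.e.\ $X$ is strictly stationary and ID.

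The substantive step is mixing. Fix $\lambda=(s_1,\dots,s_m)'$, $\mu=(p_1,\dots,p_m)'\in\mathbb{R}^{ml}$, $\theta_1,\theta_2\in\mathbb{R}^{mq}$ and a sequence $(t_n)_{n\in\mathbb{N}}\in\mathcal{T}$, and set $\tilde\mu=(p_1+t_n,\dots,p_m+t_n)'$. By independence of the $Y^k$,
\[
\mathbb{E}\!\left[\exp\!\left(i\langle\theta_1,X_\lambda\rangle+i\langle\theta_2,X_{\tilde\mu}\rangle\right)\right]=\prod_{k=1}^r\mathbb{E}\!\left[\exp\!\left(i\langle\theta_1,Y^k_\lambda\rangle+i\langle\theta_2,Y^k_{\tilde\mu}\rangle\right)\right].
\]
Applying the mixing of each $Y^k$ (Definition~\ref{definitionMIX}) to the $k$-th factor gives
\[
\lim_{n\rightarrow\infty}\mathbb{E}\!\left[\exp\!\left(i\langle\theta_1,Y^k_\lambda\rangle+i\langle\theta_2,Y^k_{\tilde\mu}\rangle\right)\right]=\mathbb{E}\!\left[\exp\!\left(i\langle\theta_1,Y^k_\lambda\rangle\right)\right]\mathbb{E}\!\left[\exp\!\left(i\langle\theta_2,Y^k_\mu\rangle\right)\right].
\]
Since $r$ is finite, the limit of the product is the product of the limits; using independence of the $Y^k$ once more to recombine the two resulting products yields exactly $\mathbb{E}[\exp(i\langle\theta_1,X_\lambda\rangle)]\,\mathbb{E}[\exp(i\langle\theta_2,X_\mu\rangle)]$, which is~\eqref{MIX}. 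Hence $X$ is mixing.

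I do not expect a genuine obstacle here; the only points requiring care are (a) observing that the three properties depend only on finite-dimensional laws, so that ``$\stackrel{d}{=}$'' in the hypothesis suffices, and (b) that interchanging limit and product is legitimate precisely because the sum is finite — an infinite linear combination would require an additional uniformity argument (uniform control of tails of the characteristic functions in $n$) that is not available at this level of generality.
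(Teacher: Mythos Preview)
Your proof is correct, and the mixing argument you give is genuinely different from the paper's. The paper establishes mixing of $X$ by invoking the Maruyama-type characterisation of Theorem~\ref{LAST-THEOREM}: it computes the L\'evy--Khintchine representation of $\mathcal{L}(X_0,X_{t_n})$, observes that by independence the Gaussian covariance $\Sigma_X(t_n)$ and the L\'evy measure $Q_{X,0t_n}$ are the sums of the corresponding quantities for the $Y^k$, and then concludes that conditions (MM1) and (MM2$'$) for $X$ follow from those for each $Y^k$. Your route bypasses this machinery entirely: you factor the joint characteristic function of $(X_\lambda,X_{\tilde\mu})$ as a finite product over $k$, pass to the limit in each factor using Definition~\ref{definitionMIX} for $Y^k$, and recombine.

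The trade-off is that your argument is shorter and more elementary --- indeed, the mixing step does not even use the ID hypothesis, so it shows more generally that a finite sum of independent stationary mixing fields is mixing. The paper's route, on the other hand, illustrates how the structural decomposition of the generating triplet behaves under independent sums, which is in the spirit of the surrounding results and of some use in its own right. Your remarks on the ``$\stackrel{d}{=}$'' reduction and on the finiteness of $r$ are both to the point.
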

\section{Mixed moving average field}
In this section we will focus on a specific random field: the mixed moving average (MMA) random field. Before introducing this random field we need to recall the definition of an $\mathbb{R}^{d}$-valued L\'{e}vy basis and the related integration theory. L\'{e}vy basis are also called \textit{infinitely divisible independently scattered random measures} in the literature. In the following let $S$ be a non-empty topological space, $\mathcal{B}(S)$ be the Borel-$\sigma$-field on $S$ and $\pi$ be some probability measure on $(S,\mathcal{B}(S))$. We denote by $\mathcal{B}_{0}(S\times\mathbb{R}^{l})$ the collection of all Borel sets in $S\times\mathbb{R}^{l}$ with finite $\pi\otimes\lambda^{l}$-measure, where $\lambda^{l}$ denotes the $l$-dimensional Lebesgue measure.
\begin{defn}
A d-dimensional L\'{e}vy basis on $S\times\mathbb{R}^{l}$ is an $\mathbb{R}^{d}$-valued random measure $\Lambda=\{\Lambda(B):B\in\mathcal{B}_{0}(S\times\mathbb{R}^{l})\}$ satisfying:
\\(i) the distribution of $\Lambda(B)$ is infinitely divisible for all $B\in\mathcal{B}_{0}(S\times\mathbb{R}^{l})$,
\\(ii) for an arbitrary $n\in\mathbb{N}$ and pairwise disjoint sets $B_{1},...,B_{n}\in\mathcal{B}_{0}(S\times\mathbb{R}^{l})$ the random variables $\Lambda(B_{1}),..,\Lambda(B_{n})$ are independent,
\\(iii) for any pairwise disjoint sets $B_{1},B_{2},...\in\mathcal{B}_{0}(S\times\mathbb{R}^{l})$ with $\bigcup_{n\in\mathbb{N}}B_{n}\in\mathcal{B}_{0}(S\times\mathbb{R}^{l})$ we have, almost surely, $\Lambda(\bigcup_{n\in\mathbb{N}}B_{n})=\sum_{n\in\mathbb{N}}\Lambda(B_{n})$.
\end{defn}
\noindent Throughout this section we shall restrict ourselves to time-homogeneous and factorisable L\'{e}vy bases, i.e.~L\'{e}vy bases with characteristic function given by
\begin{equation}\label{LevyBasis}
\mathbb{E}\bigg[e^{i\langle \theta,\Lambda(B)\rangle}\bigg]=e^{\psi(\theta)\Pi(B)},
\end{equation}
for all $\theta\in\mathbb{R}^{d}$ and $B\in\mathcal{B}_{0}(S\times\mathbb{R}^{l})$, where $\Pi=\pi\otimes\lambda^{l}$ is the product measure of the probability measure $\pi$ on $S$ and the Lebesgue measure $\lambda^{l}$ on $\mathbb{R}^{l}$ and
\begin{equation*}
\psi(\theta)=i\langle\gamma,\theta\rangle-\frac{1}{2}\langle\theta,\Sigma\theta\rangle+\int_{\mathbb{R}^{d}}\left(e^{i\langle\theta,x\rangle}-1-i\langle\theta,x\rangle\textbf{1}_{[0,1]}(\|x\|) \right)Q(dx)
\end{equation*}
is the cumulant transform of an ID distribution with characteristic triplet $(\gamma,\Sigma,Q)$. We note that the quadruple $(\gamma,\Sigma,Q,\pi)$ determines the distribution of the L\'{e}vy basis completely and therefore it is called the \textit{generating quadruple}. Now, we provide an extension of Theorem 3.2 of \cite{FuSt}, which does not need a proof since it is a combination of Theorem 3.2 of \cite{FuSt} and Theorem 2.7 of \cite{RaRo}. It concerns the existence of integrals with respect to a L\'{e}vy basis.
\begin{rem}
In this section we are considering a $q$-valued random field, since the $d$ is used for the $\mathbb{R}^{d}$-valued L\'{e}vy basis, and we denote by $M_{q\times d}(\mathbb{R})$ the collection of $q\times d$ matrices over the field $\mathbb{R}$.
\end{rem}
\begin{thm}\label{TheoremRajRos}
Let $\Lambda$ be an $\mathbb{R}^{d}$-valued L\'{e}vy basis with characteristic function of the form $(\ref{LevyBasis})$ and let $f:S\times\mathbb{R}^{l}\rightarrow M_{q\times d}(\mathbb{R})$ be a measurable function. Then $f$ is $\Lambda$-integrable as a limit in probability in the sense of Rajput and Rosinski \cite{RaRo}, if and only if
\begin{equation*}
\int_{S}\int_{\mathbb{R}^{l}}\Big\|f(A,s)\gamma+\int_{\mathbb{R}^{d}}f(A,s)x(\normalfont\textbf{1}_{[0,1]}(\|f(a,s)x\|)-\normalfont\textbf{1}_{[0,1]}(\|x\|))Q(dx)\Big\| ds\pi(dA)<\infty,
\end{equation*}
\begin{equation*}
\int_{S}\int_{\mathbb{R}^{l}}\|f(A,s)\Sigma f(A,s)'\|ds\pi(dA)<\infty,\qquad and
\end{equation*}
\begin{equation*}
\int_{S}\int_{\mathbb{R}^{l}}\int_{\mathbb{R}^{d}}\min(1,\|f(A,s)x\|^{2})Q(dx)ds\pi(dA)<\infty.
\end{equation*}
If $f$ is $\Lambda$-integrable, the distribution of $\int_{S}\int_{\mathbb{R}^{l}}f(A,s)\Lambda(dA,ds)$ is infinitely divisible with characteristic triplet $(\gamma_{int},\Sigma_{int},v_{int})$ given by
\begin{equation*}
\gamma_{int}=\int_{S}\int_{\mathbb{R}^{l}}f(A,s)\gamma+\int_{\mathbb{R}^{d}}f(A,s)x(\normalfont\textbf{1}_{[0,1]}(\|f(a,s)x\|)-\normalfont\textbf{1}_{[0,1]}(\|x\|))v(dx) ds\pi(dA),
\end{equation*}
\begin{equation*}
\Sigma_{int}= \int_{S}\int_{\mathbb{R}^{l}}f(A,s)\Sigma f(A,s)'ds\pi(dA),\qquad and
\end{equation*}
\begin{equation*}
v_{int}(B)=\int_{S}\int_{\mathbb{R}^{l}}\int_{\mathbb{R}^{d}}\normalfont\textbf{1}_{B}(f(A,s)x)Q(dx)ds\pi(dA)
\end{equation*}
for all Borel sets $B\subseteq\mathbb{R}^{q}\setminus\{0\}.$
\end{thm}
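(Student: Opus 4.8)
The plan is to obtain this statement by stitching together two existing bodies of work: the matrix-valued stochastic integration theory of Fuchs \& Stelzer (their Theorem 3.2), which treats an $M_{q\times d}(\mathbb{R})$-valued integrand against an $\mathbb{R}^{d}$-valued L\'evy basis, and the integration theory of Rajput \& Rosinski \cite{RaRo} (their Theorem 2.7), which is formulated for an arbitrary $\sigma$-finite control space and for the ``limit in probability'' notion of the integral. The only genuinely new ingredient relative to \cite{FuSt} is that the temporal domain is $\mathbb{R}^{l}$ rather than $\mathbb{R}$, and the only role the domain plays is through the Lebesgue measure $\lambda^{l}$; hence the control space $(S\times\mathbb{R}^{l},\mathcal{B}(S\times\mathbb{R}^{l}),\pi\otimes\lambda^{l})$ fits directly into the Rajput--Rosinski framework, and the Fuchs--Stelzer computations transfer with $\lambda$ replaced by $\lambda^{l}$.

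First I would set up the control measure. Since $\Lambda$ is time-homogeneous and factorisable, the cumulant functional $\theta\mapsto\psi(\theta)$ does not depend on the point $(A,s)\in S\times\mathbb{R}^{l}$, and $\mathbb{E}[e^{i\langle\theta,\Lambda(B)\rangle}]=e^{\psi(\theta)\Pi(B)}$ with $\Pi=\pi\otimes\lambda^{l}$; thus $\Pi$ plays the role of the control measure and the local characteristics are the constant triplet $(\gamma,\Sigma,Q)$. For a simple function $f=\sum_{k}M_{k}\mathbf{1}_{B_{k}}$ with $M_{k}\in M_{q\times d}(\mathbb{R})$ and disjoint $B_{k}\in\mathcal{B}_{0}(S\times\mathbb{R}^{l})$ one has $\int f\,d\Lambda=\sum_{k}M_{k}\Lambda(B_{k})$, which is ID with characteristic function $\exp(\sum_{k}\psi(M_{k}'\theta)\Pi(B_{k}))=\exp(\int_{S}\int_{\mathbb{R}^{l}}\psi(f(A,s)'\theta)\,ds\,\pi(dA))$, using $\langle\theta,M_{k}\Lambda(B_{k})\rangle=\langle M_{k}'\theta,\Lambda(B_{k})\rangle$.

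Next I would characterise integrability. By the definition of $\Lambda$-integrability as a limit in probability of integrals of approximating simple functions, $f$ is integrable iff the $\mathbb{R}^{q}$-valued integrals of such approximations converge in probability, which by the L\'evy continuity theorem together with the ID structure is equivalent to $\int_{S}\int_{\mathbb{R}^{l}}\psi(f(A,s)'\theta)\,ds\,\pi(dA)$ being well defined and continuous in $\theta$ at the origin. Expanding $\psi$ at the argument $f(A,s)'\theta$ and changing variables in the L\'evy integral through the linear map $x\mapsto f(A,s)x$ — which transports $Q$ to its pushforward and, because the L\'evy--Khintchine truncation $\mathbf{1}_{[0,1]}(\|\cdot\|)$ is taken in the image coordinate, produces the correction $f(A,s)x(\mathbf{1}_{[0,1]}(\|f(A,s)x\|)-\mathbf{1}_{[0,1]}(\|x\|))$ in the drift — shows that finiteness of this expression is equivalent to the three displayed conditions: the first governs the drift, the second the Gaussian covariance $f\Sigma f'$, and the third the transported L\'evy mass via $\int\min(1,\|f(A,s)x\|^{2})Q(dx)$. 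Reading off the limiting cumulant transform then yields $(\gamma_{int},\Sigma_{int},v_{int})$ in the stated form, with $v_{int}$ the averaged pushforward $v_{int}(B)=\int_{S}\int_{\mathbb{R}^{l}}Q(\{x:f(A,s)x\in B\})\,ds\,\pi(dA)$.

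I expect the only real subtlety to be that the matrix-valued integration cannot be reduced to componentwise scalar Rajput--Rosinski, because the components $\Lambda^{(1)},\dots,\Lambda^{(d)}$ of the L\'evy basis are dependent; one must therefore work throughout with the joint characteristic function $e^{\psi(\theta)\Pi(B)}$ and argue convergence of the $\mathbb{R}^{q}$-valued simple integrals directly. This is exactly the content imported from Theorem 3.2 of \cite{FuSt}, and the one place where care is genuinely needed is checking that their argument is insensitive to replacing $\mathbb{R}$ by $\mathbb{R}^{l}$ — it is, since only $\lambda^{l}$-integrability enters. Everything else is the routine change-of-variables bookkeeping in the L\'evy--Khintchine representation described above, which is why the statement requires no separate proof.
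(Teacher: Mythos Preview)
Your proposal is correct and matches the paper's approach exactly: the paper's proof consists of the single sentence that the theorem is a specific representation of Theorem 3.2 of \cite{FuSt} and Theorem 2.7 of \cite{RaRo}, which is precisely the combination you describe. Your additional detail about why the passage from $\mathbb{R}$ to $\mathbb{R}^{l}$ is innocuous (only $\lambda^{l}$ enters through the control measure) and why componentwise reduction fails is helpful elaboration, but the core argument is the same citation-based transfer.
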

\begin{proof}
This theorem is a specific representation of Theorem 3.2 of \cite{FuSt} and Theorem 2.7 of \cite{RaRo}. 
\end{proof}
\noindent Let us now introduce the main object of interest of this section: the mixed moving average random field.
\begin{defn}
(Mixed Moving Average Random Field). Let $\Lambda$ be an $\mathbb{R}^{d}$-valued L\'{e}vy basis on $S\times\mathbb{R}^{l}$ and let $f:S\times\mathbb{R}^{l}\rightarrow M_{q\times d}(\mathbb{R})$ be a measurable function. If the random field
\begin{equation*}
X_{t}:= \int_{S}\int_{\mathbb{R}^{l}}f(A,t-s)\Lambda(dA,ds)
\end{equation*}
exists  in the sense of Theorem \ref{TheoremRajRos} for all $t\in\mathbb{R}^{l}$, it is called an $n$-dimensional mixed moving average random field (MMA random field for short). The function f is said to be its kernel function.
\end{defn}
\noindent MMA random field have been discussed in Surgailis \textit{et al.}~\cite{Surg} and Veraart \cite{Ver}. Note that an MMA random field is an ID and strictly stationary random field.
\\
The following lemma is a direct application of Corollary \ref{corol} to our MMA random field case.
\begin{lem}\label{lemmaMMA}
Let $(X_{t})_{t\in\mathbb{R}^{l}}\stackrel{d}{=}(\int_{S}\int_{\mathbb{R}^{l}}f(A,t-s)\Lambda(dA,ds))_{t\in\mathbb{R}^{l}}$ be an MMA random field where $\Lambda$ is an $\mathbb{R}^{d}$-valued L\'{e}vy basis on $S\times\mathbb{R}^{l}$ with generating quadruple $(\gamma,\Sigma, Q,\pi)$ and $f:S\times\mathbb{R}^{l}\rightarrow M_{q\times d}(\mathbb{R})$ is a measurable function. Then $(X_{t})_{t\in\mathbb{R}^{l}}$ is mixing if and only if
\begin{equation*}
\lim\limits_{n\rightarrow\infty}\bigg\{\bigg\|\int_{S}\int_{\mathbb{R}^{l}}f(A,-s)\Sigma f(A,t_{n}-s)'ds\pi(dA)\bigg\|
\end{equation*}
\begin{equation*}
+ \int_{S}\int_{\mathbb{R}^{l}}\int_{\mathbb{R}^{d}}\min(1,\|f(A,-s)x\|\cdot\|f(A,t_{n}-s)x\|)Q(dx)ds\pi(dA)\bigg\}=0,
\end{equation*}
for any $(t_{n})_{n\in\mathbb{N}}\in\mathcal{T}$.
\end{lem}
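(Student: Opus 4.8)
The plan is to apply Corollary~\ref{corol} with the ambient dimension $d$ there replaced by $q$, after making the Gaussian cross-covariance $\Sigma(t_{n})$ and the L\'{e}vy measure $Q_{0t_{n}}$ of the joint law $\mathcal{L}(X_{0},X_{t_{n}})$ explicit in terms of the generating quadruple $(\gamma,\Sigma,Q,\pi)$ and the kernel $f$.

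First I would note that the $\mathbb{R}^{2q}$-valued vector $(X_{0}',X_{t_{n}}')'$ is itself a stochastic integral against the same L\'{e}vy basis $\Lambda$: writing $\tilde{f}_{n}:S\times\mathbb{R}^{l}\to M_{2q\times d}(\mathbb{R})$ for the kernel obtained by stacking the two blocks,
\[
\tilde{f}_{n}(A,s):=\binom{f(A,-s)}{f(A,t_{n}-s)},
\]
linearity of the integral in the sense of Rajput and Rosinski gives $(X_{0}',X_{t_{n}}')'\stackrel{d}{=}\int_{S}\int_{\mathbb{R}^{l}}\tilde{f}_{n}(A,s)\Lambda(dA,ds)$. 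The $\Lambda$-integrability of $\tilde{f}_{n}$ follows from that of $f(A,\,\cdot-t)$ for $t\in\{0,t_{n}\}$, which holds because the MMA field exists, using the identity $\|\tilde{f}_{n}(A,s)x\|^{2}=\|f(A,-s)x\|^{2}+\|f(A,t_{n}-s)x\|^{2}$ together with subadditivity of $\min(1,\cdot)$ and of the norm; in particular Theorem~\ref{TheoremRajRos} applies to $\tilde{f}_{n}$ and yields the characteristic triplet of $\mathcal{L}(X_{0},X_{t_{n}})$.

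Reading off that triplet is the second step. The Gaussian covariance of $(X_{0}',X_{t_{n}}')'$ equals $\int_{S}\int_{\mathbb{R}^{l}}\tilde{f}_{n}(A,s)\Sigma\tilde{f}_{n}(A,s)'\,ds\,\pi(dA)$, whose off-diagonal $q\times q$ block is precisely $\Sigma(t_{n})=\int_{S}\int_{\mathbb{R}^{l}}f(A,-s)\Sigma f(A,t_{n}-s)'\,ds\,\pi(dA)$ (the two diagonal blocks are $n$-independent by stationarity, so $\|\Sigma(t_{n})\|\to0$ is exactly condition (MM1) in this setting). The L\'{e}vy measure is the image measure
\[
Q_{0t_{n}}(B)=\int_{S}\int_{\mathbb{R}^{l}}\int_{\mathbb{R}^{d}}\mathbf{1}_{B}\big(f(A,-s)x,\,f(A,t_{n}-s)x\big)\,Q(dx)\,ds\,\pi(dA),\qquad B\subseteq\mathbb{R}^{2q}\setminus\{0\}\ \text{Borel.}
\]
Since the integrand $(u,v)\mapsto\min(1,\|u\|\cdot\|v\|)$ in Corollary~\ref{corol} is nonnegative, measurable and vanishes at the origin, the transfer theorem for image measures gives
\[
\int_{\mathbb{R}^{2q}}\min(1,\|u\|\cdot\|v\|)\,Q_{0t_{n}}(du,dv)=\int_{S}\int_{\mathbb{R}^{l}}\int_{\mathbb{R}^{d}}\min\!\big(1,\|f(A,-s)x\|\cdot\|f(A,t_{n}-s)x\|\big)\,Q(dx)\,ds\,\pi(dA).
\]

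Substituting these two identities into the criterion of Corollary~\ref{corol} (with $q$ in place of $d$) turns it verbatim into the displayed condition, which proves the lemma. The only genuinely delicate point is the first step: verifying that the stacked kernel $\tilde{f}_{n}$ is $\Lambda$-integrable and that $\mathcal{L}(X_{0},X_{t_{n}})$ is the law of $\int\tilde{f}_{n}\,d\Lambda$ with triplet as in Theorem~\ref{TheoremRajRos}; everything afterwards is bookkeeping with the image-measure formula, and the finiteness of the integral in the statement is then automatic because $\min(1,\|u\|\cdot\|v\|)\le\min(1,\|(u,v)\|^{2})$ is $Q_{0t_{n}}$-integrable.
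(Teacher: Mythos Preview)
Your proposal is correct and follows essentially the same approach as the paper: represent $(X_{0}',X_{t_{n}}')'$ as the integral of the stacked kernel $\tilde{f}_{n}$ against $\Lambda$, read off the Gaussian covariance and the L\'{e}vy measure $Q_{0t_{n}}$ from Theorem~\ref{TheoremRajRos}, rewrite $\int_{\mathbb{R}^{2q}}\min(1,\|u\|\cdot\|v\|)\,Q_{0t_{n}}(du,dv)$ via the image-measure formula, and invoke Corollary~\ref{corol}. The only difference is that you spell out the $\Lambda$-integrability of $\tilde{f}_{n}$ and the change-of-variables step in more detail than the paper does, which is fine.
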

\noindent The following theorem is the main result of this section, while the next proposition is an extension of it.
\begin{thm}\label{MMA}
Let $(X_{t})_{t\in\mathbb{R}^{l}}\stackrel{d}{=}(\int_{S}\int_{\mathbb{R}^{l}}f(A,t-s)\Lambda(dA,ds))_{t\in\mathbb{R}^{l}}$ be an MMA random field where $\Lambda$ is an $\mathbb{R}^{d}$ valued L\'{e}vy basis on $S\times\mathbb{R}^{l}$ with generating quadruple $(\gamma,\Sigma, Q,\pi)$ and $f:S\times\mathbb{R}^{l}\rightarrow M_{q\times d}(\mathbb{R})$ is a measurable function. Then $(X_{t})_{t\in\mathbb{R}^{l}}$ is mixing.
\end{thm}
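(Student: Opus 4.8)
The plan is to check the criterion of Lemma \ref{lemmaMMA}: it suffices to show that, for every sequence $(t_n)_{n\in\mathbb{N}}\in\mathcal{T}$, the two quantities
\[
\Big\|\int_S\int_{\mathbb{R}^l}f(A,-s)\Sigma f(A,t_n-s)'\,ds\,\pi(dA)\Big\|
\qquad\text{and}\qquad
\int_S\int_{\mathbb{R}^l}\int_{\mathbb{R}^d}\min\big(1,\|f(A,-s)x\|\cdot\|f(A,t_n-s)x\|\big)\,Q(dx)\,ds\,\pi(dA)
\]
both tend to $0$ as $n\to\infty$. The common engine is an elementary decorrelation fact: if $\nu=\pi\otimes\lambda^l\otimes\rho$ for some $\sigma$-finite measure $\rho$ on $\mathbb{R}^d$ (with the third factor optionally absent) and $h_1,h_2\in L^2(\nu)$, then $\int h_1(A,s,x)\,h_2(A,s+t_n,x)\,\nu(dA,ds,dx)\to 0$ whenever $\|t_n\|\to\infty$. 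I would prove this by approximating $h_1,h_2$ in $L^2(\nu)$ by functions supported on $\{\|s\|\le R\}$: the cross term then vanishes identically once $\|t_n\|>2R$ because the two $s$-supports become disjoint, while the approximation error is handled by Cauchy--Schwarz together with the translation invariance of $\lambda^l$; letting $R\to\infty$ gives the claim.

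For the Gaussian term I would factor $\Sigma=\Sigma^{1/2}(\Sigma^{1/2})'$ and set $\tilde f:=f\Sigma^{1/2}$, so that $f(A,-s)\Sigma f(A,t_n-s)'=\tilde f(A,-s)\tilde f(A,t_n-s)'$ and $f(A,s)\Sigma f(A,s)'=\tilde f(A,s)\tilde f(A,s)'$. A standard comparison of matrix norms turns the second integrability condition of Theorem \ref{TheoremRajRos} into $\int_S\int_{\mathbb{R}^l}\|\tilde f(A,s)\|_F^2\,ds\,\pi(dA)<\infty$, i.e.\ every entry $\tilde f_{ik}$ lies in $L^2(\pi\otimes\lambda^l)$. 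After the change of variables $s\mapsto -s$, the $(i,j)$ entry of the matrix in question equals $\sum_k\int_S\int_{\mathbb{R}^l}\tilde f_{ik}(A,s)\,\tilde f_{jk}(A,s+t_n)\,ds\,\pi(dA)$, which tends to $0$ by the decorrelation fact; hence the matrix norm tends to $0$.

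For the jump term, set $\mu:=\pi\otimes\lambda^l\otimes Q$ and $g(A,s,x):=\|f(A,s)x\|$, so that the third integrability condition of Theorem \ref{TheoremRajRos} reads $\int\min(1,g^2)\,d\mu<\infty$ and, after $s\mapsto -s$, the jump term equals $\int\min\big(1,g(A,s,x)\,g(A,s+t_n,x)\big)\,d\mu$. The difficulty is that $\int\min(1,g^2)\,d\mu<\infty$ does \emph{not} force $g\in L^2(\mu)$, so the decorrelation fact is not directly applicable; I would circumvent this by truncation. Fix $M\ge 1$ and write $g=g_M+r_M$ with $g_M:=g\mathbf{1}_{\{g\le M\}}$ and $r_M:=g\mathbf{1}_{\{g>M\}}$; then $g_M\in L^2(\mu)$ (its square equals $\min(1,g^2)$ on $\{g\le 1\}$ and is bounded by $M^2$ on the finite-measure set $\{1<g\le M\}$), while $\mu(\{g>M\})=\int_{\{g>M\}}\min(1,g^2)\,d\mu\to 0$ as $M\to\infty$. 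Expanding the product and using the subadditivity of $u\mapsto\min(1,u)$, the three summands containing a factor $r_M$ each integrate to at most $\mu(\{g>M\})$ — using that $\tau_{t_n}$ (translation of the $\mathbb{R}^l$-coordinate by $t_n$) preserves $\mu$, so $\mu(\tau_{t_n}^{-1}\{g>M\})=\mu(\{g>M\})$ — hence contribute at most $3\mu(\{g>M\})$ uniformly in $n$; and $\min\big(1,g_M(g_M\circ\tau_{t_n})\big)\le g_M\,(g_M\circ\tau_{t_n})$ has integral tending to $0$ by the decorrelation fact applied to $h_1=h_2=g_M$. Therefore $\limsup_n$ of the jump term is at most $3\mu(\{g>M\})$ for every $M$, hence $0$.

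Combining the two limits with Lemma \ref{lemmaMMA} yields that $(X_t)_{t\in\mathbb{R}^l}$ is mixing. I expect the main obstacle to be precisely the jump term: because $\int\min(1,\|f(A,s)x\|^2)\,Q(dx)\,ds\,\pi(dA)<\infty$ controls an $L^2$-type quantity only after a truncation that depends on the level $M$, one must separate the "large" and "small" parts of $g$ and balance a uniform-in-$n$ tail bound against the genuine decay of the truncated cross-correlation; the decorrelation fact itself and the Gaussian part are routine.
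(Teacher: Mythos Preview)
Your proof is correct and follows the same overall strategy as the paper: reduce to Lemma~\ref{lemmaMMA}, then establish an $L^2$-decorrelation principle (via approximation by functions of bounded $s$-support) and apply it to both the Gaussian and the jump contribution. The Gaussian part is handled in essentially the same way in both.

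The one place where your argument differs from the paper's is the jump term. You truncate $g=\|f(A,s)x\|$ at level $M$, show $g_M\in L^2(\mu)$, bound the three remainder pieces by $3\mu(\{g>M\})$ via subadditivity of $u\mapsto\min(1,u)$ and translation invariance, and let $M\to\infty$. The paper instead works directly with $h(A,s,x):=\min(1,\|f(A,-s)x\|)$, which already lies in $L^2(\mu)$ because $\min(1,u)^2=\min(1,u^2)$ and the third integrability condition in Theorem~\ref{TheoremRajRos} gives exactly $\int\min(1,g^2)\,d\mu<\infty$; it then restricts $Q_{0t_n}$ to a ball $B_R\subset\mathbb{R}^{2q}$ (using the uniform-in-$n$ bound $Q_{0t_n}(B_R^c)\le 2Q_0(\|u\|>R/\sqrt{2})$ from stationarity) and invokes the elementary inequality $\min(1,uv)\le R\min(1,u)\min(1,v)$ for $u,v\le R$ to reduce to the $L^2$-inner product $\int h\,(h\circ\tau_{t_n})\,d\mu$. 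Your level-$M$ truncation is a clean alternative that stays entirely on the $(A,s,x)$-side and avoids the detour through $Q_{0t_n}$ and $B_R$; the paper's route has the minor advantage that the relevant $L^2$-function $\min(1,g)$ is available immediately, without an auxiliary parameter. Either way the limit follows.
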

\noindent From the above result and Proposition \ref{proMMA}, we have this corollary.
\begin{co}
Sums of independent MMA random fields are stationary, ID and mixing random fields.
\end{co}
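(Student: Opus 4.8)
The plan is to deduce this corollary directly from the two results just established, namely Theorem~\ref{MMA} and Proposition~\ref{proMMA}, so essentially no fresh analysis is required. The statement asserts three properties (stationarity, infinite divisibility, and mixing) for a field of the form $(\sum_{k=1}^{r}Y^{k}_{t})_{t\in\mathbb{R}^{l}}$, where each $(Y^{k}_{t})_{t\in\mathbb{R}^{l}}$ is an independent MMA random field. The strategy is to verify that such a sum satisfies the hypotheses of Proposition~\ref{proMMA} and then invoke that proposition.

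First I would fix $r\in\mathbb{N}$ and take $(Y^{k}_{t})_{t\in\mathbb{R}^{l}}$, $k=1,\dots,r$, to be independent MMA random fields, say $Y^{k}_{t}\stackrel{d}{=}\int_{S}\int_{\mathbb{R}^{l}}f_{k}(A,t-s)\Lambda_{k}(dA,ds)$, each driven by its own $\mathbb{R}^{d}$-valued L\'{e}vy basis $\Lambda_{k}$ with generating quadruple $(\gamma_{k},\Sigma_{k},Q_{k},\pi_{k})$. I would then record that each $(Y^{k}_{t})_{t\in\mathbb{R}^{l}}$ is, by the remark following the definition of the MMA random field, an ID and strictly stationary random field, and that it is mixing by Theorem~\ref{MMA}. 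Thus each summand is simultaneously independent, stationary, ID, and mixing, which is precisely the list of properties required of the family $(Y^{k}_{t})_{t\in\mathbb{R}^{l}}$ in the hypothesis of Proposition~\ref{proMMA}.

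Having checked the hypotheses, I would apply Proposition~\ref{proMMA} to the family $\{(Y^{k}_{t})_{t\in\mathbb{R}^{l}}\}_{k=1,\dots,r}$, which immediately yields that the sum $(\sum_{k=1}^{r}Y^{k}_{t})_{t\in\mathbb{R}^{l}}$ is stationary, ID, and mixing. Since the corollary's object is exactly a sum of independent MMA fields, this completes the argument. The only point that merits a word of care is the matching of dimensions and generating data: Proposition~\ref{proMMA} is stated for $\mathbb{R}^{q}$-valued summands, so I would note that each MMA field $Y^{k}$ is $\mathbb{R}^{q}$-valued (the kernels $f_{k}$ taking values in $M_{q\times d}(\mathbb{R})$), so the common value space is $\mathbb{R}^{q}$ and the sum is well defined.

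There is no substantive obstacle here; the corollary is a formal consequence of the preceding two results, and the proof is a one-line citation of Theorem~\ref{MMA} (for the mixing of each summand, together with the standing fact that MMA fields are ID and stationary) followed by Proposition~\ref{proMMA} (to pass from the independent summands to their sum). The mild care needed is only bookkeeping: confirming that each summand indeed enjoys all four properties required by Proposition~\ref{proMMA} and that the summands share a common codomain $\mathbb{R}^{q}$, so that their sum makes sense and the proposition applies verbatim.
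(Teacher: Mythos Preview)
Your proposal is correct and follows exactly the paper's own argument: the paper's proof is a single line stating that the corollary is an immediate consequence of Theorem~\ref{MMA} and Proposition~\ref{proMMA}. Your additional bookkeeping about the common codomain $\mathbb{R}^{q}$ is accurate but not strictly necessary.
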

\begin{proof}
This corollary is an immediate consequence of Theorem \ref{MMA} and Proposition \ref{proMMA}.
\end{proof}
\begin{rem}
The above corollary holds for any linear combination of independent MMA, including MMA with different L\'{e}vy basis and different parameter space $S$.
\end{rem}
\subsection{Meta-times and subordination}
In this section we give a brief introduction of the concepts of meta-times and subordination, and present a result which is a corollary of Theorem $\ref{MMA}$.
\\
First, we recall the Definition of an homogeneous L\'{e}vy sheet (see \cite{BarPed} definition 2.1). Let $\triangle^{b}_{a}F$ indicate the increments of a function $F$ over an interval $(a,b]\subset\mathbb{R}^{k}_{+}$ and let $a\leq b$ indicate $a^{i}\leq b^{i}$ for $i=1,...,k$ (see \cite{BarPed}), where $\mathbb{R}^{m}_{+}=\{x\in\mathbb{R}^{m}:x^{i}\geq0, i=1,...,m \}$ and $m\in\mathbb{N}$. Let $k,l\in\mathbb{N}$.
\begin{defn}
Let $X=\{X_{t}:t\in\mathbb{R}^{k}_{+} \}$ be a family of random vectors in $\mathbb{R}^{d}$. We say that $X$ is an homogeneous L\'{e}vy sheet if $X_{t}=0$ for all $t\in\{t\in\mathbb{R}^{k}_{+}:t^{j}=0,j=1,...,k \}$ a.s., $\triangle_{a_{1}}^{b_{1}}X,...,\triangle_{a_{n}}^{b_{n}}X$ for $n\geq2$ and $(a_{1},b_{1}],...,(a_{n},b_{n}]\subset\mathbb{R}^{k}_{+}$ are disjoint, $X$ is continuous in probability, $\triangle_{a+t}^{b+t}X\stackrel{d}{=}\triangle_{a}^{b}X$ for all $a,b,t\in\mathbb{R}^{k}$ with $a\leq b$, and $X$ is \normalfont lamp.
\end{defn}
\noindent The concept of \textit{lamp} (i.e.~\textit{limits along monotone paths}) is the analogue of \textit{c\`{a}dl\`{a}g}, but in the multiparameter setting. If $X=\{X_{t}:t\in\mathbb{R}^{k}_{+} \}$ is a homogeneous L\'{e}vy sheet then $\mathcal{L}(\triangle_{a}^{b}X)\in ID(\mathbb{R}^{d})$. 
\\ Now let $X=\{X_{t}:t\in\mathbb{R}^{k}_{+} \}$ be an $\mathbb{R}^{d}$-valued homogeneous L\'{e}vy sheet on $\mathbb{R}^{k}_{+}$ and $\Lambda_{X}=\{\Lambda_{X}(A):A\in\mathcal{B}(\mathbb{R}^{k}_{+}) \}$ be the homogeneous L\'{e}vy basis induced by $X$, namely $\Lambda_{X}([0,t])=X_{t}$ a.s.~for all $t\in\mathbb{R}^{k}_{+}$. Let $T=\{T_{t}:t\in\mathbb{R}^{k}_{+} \}$ be an $\mathbb{R}_{+}$-valued homogeneous L\'{e}vy sheet and $\Lambda_{T}=\{\Lambda_{T}(A):A\in\mathcal{B}(\mathbb{R}^{k}_{+}) \}$ be the nonnegative homogeneous L\'{e}vy basis induced by $T$. Define $\mathcal{F}^{T}=\sigma(\Lambda_{T}(A):A\in\mathcal{B}_{b}(\mathbb{R}^{k}_{+}))$ to be the $\sigma$-field generated by $\Lambda_{T}$. Then there exists a $(\mathcal{F}^{T},\mathcal{B}(\mathbb{R}^{k}_{+}),\mathcal{B}(\mathbb{R}^{k}))$-measurable mapping $\phi_{T}:\Omega\times\mathbb{R}^{k}_{+}\rightarrow\mathbb{R}^{k}$ such that for all $\omega\in\Omega$ and $A\in\mathcal{B}_{b}(\mathbb{R}^{k}_{+})$, the set $\textbf{T}(A)(\omega),$ given by $\textbf{T}(A)(\omega)=\{x\in\mathbb{R}^{k}_{+}:\phi_{T}(\omega,x)\in A \}$ is bounded and
\begin{equation*}
\Lambda_{T}(A)(\omega)=Leb(\textbf{T}(A)(\omega)).
\end{equation*}
For each $\omega$, $\textbf{T}(\cdot)(\omega)$ is called a \textit{meta-time associated with }$\Lambda_{T}(\cdot)(\omega)$. Let $M=\{M(A):A\in\mathcal{B}_{b}(\mathbb{R}^{k}_{+}) \}$ be defined as
\begin{equation*}
M(A)(\omega)=\Lambda_{X}(\textbf{T}(A))(\omega)
\end{equation*}
for all $A\in\mathcal{B}(\mathbb{R}^{k}_{+})$. We say that $M$ appears by \textit{extended subordination of} $\Lambda_{X}$ \textit{by} $\Lambda_{T}$ (or \textit{of $X$ by} $T$). Then by Theorem 5.1 of \cite{BarPed} we have that $M$ ia a homogeneous L\'{e}vy basis.
\\ Therefore, we have the following corollary of Theorem \ref{MMA}.
\begin{co}
Let $X=\{X_{t}:t\in\mathbb{R}^{k} \}$ be an $\mathbb{R}^{d}$-valued homogeneous L\'{e}vy sheet on $\mathbb{R}^{k}$ and $\Lambda_{X}=\{\Lambda_{X}(A):A\in\mathcal{B}(\mathbb{R}^{k}) \}$ be the homogeneous L\'{e}vy basis induced by $X$. Let $T=\{T_{t}:t\in\mathbb{R}^{k} \}$ be an $\mathbb{R}_{+}$-valued homogeneous L\'{e}vy sheet and $\Lambda_{T}=\{\Lambda_{T}(A):A\in\mathcal{B}(\mathbb{R}^{k}) \}$ be the nonnegative homogeneous L\'{e}vy basis induced by $T$. Let $M=\{M(A):A\in\mathcal{B}_{b}(\mathbb{R}^{k}_{+}) \}$ be an extended subordination of  $\Lambda_{X}$ by $\Lambda_{T}$. Let $(Y_{t})_{t\in\mathbb{R}^{l}}\stackrel{d}{=}(\int_{\mathbb{R}^{k-l}}\int_{\mathbb{R}^{l}}f(B,t-s)M(dB,ds))_{t\in\mathbb{R}^{l}}$, where $f:\mathbb{R}^{k-l}\times\mathbb{R}^{l}\rightarrow M_{q\times d}(\mathbb{R})$ is a measurable function. Then $(Y_{t})_{t\in\mathbb{R}^{l}}$ is mixing.
\end{co}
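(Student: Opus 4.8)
The plan is to reduce the statement to Theorem~\ref{MMA} by showing that the subordinated random field $(Y_t)_{t\in\mathbb{R}^l}$ is, in distribution, an MMA random field driven by an $\mathbb{R}^d$-valued time-homogeneous factorisable L\'evy basis on a suitable product space $S\times\mathbb{R}^l$. First I would identify the correct driving L\'evy basis. By Theorem~5.1 of \cite{BarPed}, $M$ is a homogeneous L\'evy basis on $\mathbb{R}^k_+$ (and, extending by stationarity of increments, on $\mathbb{R}^k$); the key point is that extended subordination of a $d$-dimensional L\'evy sheet by a nonnegative L\'evy sheet produces again a L\'evy basis whose one-dimensional marginals are infinitely divisible and whose characteristic function is of the time-homogeneous factorisable form $\mathbb{E}[e^{i\langle\theta,M(B)\rangle}]=e^{\psi_M(\theta)\,\mathrm{Leb}_k(B)}$ for some cumulant function $\psi_M$, because the subordinating sheet $T$ is itself homogeneous and the meta-time $\mathbf{T}(\cdot)$ has stationary, independently scattered structure. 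Here the control measure is $k$-dimensional Lebesgue measure on $\mathbb{R}^k$, which I would split as $\pi\otimes\lambda^l$ with $S=\mathbb{R}^{k-l}$, $\pi$ a probability measure equivalent to (a normalised) Lebesgue measure on $\mathbb{R}^{k-l}$ — or, more carefully, localise to avoid the fact that Lebesgue measure on $\mathbb{R}^{k-l}$ is infinite (see the obstacle below).

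Next I would match the integral representation. The field $Y_t=\int_{\mathbb{R}^{k-l}}\int_{\mathbb{R}^l} f(B,t-s)\,M(dB,ds)$ is, by construction, an integral of the shifted kernel $(B,s)\mapsto f(B,t-s)$ against $M$; since $M$ is a time-homogeneous factorisable L\'evy basis on $\mathbb{R}^{k-l}\times\mathbb{R}^l$ with generating quadruple $(\gamma_M,\Sigma_M,Q_M,\pi)$ read off from $\psi_M$, Definition of the MMA random field applies verbatim, and $(Y_t)_{t\in\mathbb{R}^l}$ is an MMA random field with kernel $f$ once $\Lambda$-integrability holds (which is part of the hypothesis that $(Y_t)$ is well defined). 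Then Theorem~\ref{MMA} applies directly and yields that $(Y_t)_{t\in\mathbb{R}^l}$ is mixing. I would also invoke Proposition~\ref{proMMA} only if the construction forces a decomposition into independent pieces; in the clean formulation it is not needed.

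The main obstacle I expect is the mismatch between the \emph{finite} probability measure $\pi$ on $S$ required in the L\'evy-basis framework of Section~3 and the fact that the natural "parameter space" coming out of the subordination construction, $S=\mathbb{R}^{k-l}$, carries infinite ($(k-l)$-dimensional Lebesgue) mass. There are two ways around this: either (i) push the whole $\mathbb{R}^{k-l}$-component inside the ``time'' variable by regarding $M$ as a L\'evy basis on $\mathbb{R}^{k}=\mathbb{R}^{(k-l)+l}$ with pure Lebesgue control and re-deriving the mixing criterion of Lemma~\ref{lemmaMMA} with $S$ a singleton and $\lambda^l$ replaced by $\lambda^{k}$ restricted appropriately — the proof of Theorem~\ref{MMA} goes through unchanged since it only used the product structure of the control measure and the dominated-convergence estimates on $\min(1,\|\cdot\|\,\|\cdot\|)$ — or (ii) disintegrate $\lambda^{k-l}=\sum_{m} c_m\,\pi_m$ along a countable partition of $\mathbb{R}^{k-l}$ into unit cubes, write $M=\sum_m M_m$ with $M_m$ independent L\'evy bases each on a \emph{probability} space, and apply Theorem~\ref{MMA} to each summand $Y^{m}_t=\int f(B,t-s)M_m(dB,ds)$ followed by Proposition~\ref{proMMA} (noting that a countable, rather than finite, sum of independent mixing fields still mixes by a routine $L^2$/characteristic-function truncation argument, which one should spell out). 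Either route is routine once the bookkeeping is set up; the only genuinely delicate point is checking that the extended-subordination construction of \cite{BarPed} indeed preserves the time-homogeneous factorisable form \eqref{LevyBasis} of the characteristic function, so that the generating quadruple $(\gamma_M,\Sigma_M,Q_M,\pi)$ exists and Theorem~\ref{TheoremRajRos} and Theorem~\ref{MMA} are applicable verbatim.
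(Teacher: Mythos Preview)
Your approach is essentially the same as the paper's: observe that $M$ is a homogeneous L\'evy basis on $\mathbb{R}^{k}$ (via Theorem~5.1 of \cite{BarPed}, after extending the meta-time framework from $\mathbb{R}^{k}_{+}$ to $\mathbb{R}^{k}$), recognise $(Y_t)$ as an MMA field with kernel $f$ driven by $M$, and invoke Theorem~\ref{MMA}. The paper's proof is two sentences and stops there.

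Your discussion of the ``obstacle''---that the natural control measure on $S=\mathbb{R}^{k-l}$ is Lebesgue and hence not the probability measure $\pi$ required by the L\'evy-basis setup of Section~3---is a point the paper does not raise; it simply applies Theorem~\ref{MMA} without comment. Your proposed fixes (either rerun the proof of Theorem~\ref{MMA} with $\lambda^{k-l}$ in place of a probability $\pi$, noting that only $\sigma$-finiteness of the control measure is used in the $L^{2}$-approximation arguments, or decompose along unit cubes and use Proposition~\ref{proMMA}) are more careful than what the paper provides. Route~(i) is the cleaner of the two and aligns with how the paper implicitly treats the situation; route~(ii) is unnecessary, and the countable-sum extension of Proposition~\ref{proMMA} you mention would need its own justification, so it is better avoided.
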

\begin{proof}
It is sufficient to notice that the framework introduced above holds for the case $\mathbb{R}^{k}$ and not just for $\mathbb{R}^{k}_{+}$ (see \cite{BarPed}) and that $M$ is an $\mathbb{R}^{d}$-valued homogeneous L\'{e}vy basis on $\mathbb{R}^{k}$. Then by using Theorem \ref{MMA} we obtain the result.
\end{proof}
\section{Weak mixing and ergodicity}
In this section we will first show how to modify our results to obtain weak mixing version of the results presented before and then prove that for stationary ID random fields ergodicity and weak mixing are equivalent. We start with a definition of a density one set and of weak mixing for stationary random fields.
\begin{defn}
A set $E\subset\mathbb{R}^{l}$ is said to have density zero in $\mathbb{R}^{l}$ with respect to the Lebesgue measure $\lambda$ if
\begin{equation*}
\lim\limits_{T\rightarrow\infty}\frac{1}{(2T)^{l}}\int_{(-T,T]^{l}}\mathbf{1}_{E}(x)\lambda(dx)=0.
\end{equation*}
A set $D\subset\mathbb{R}^{l}$ is said to have density one in $\mathbb{R}^{l}$ if $\mathbb{R}^{l}\setminus D$ has density zero in $\mathbb{R}^{l}$.
\end{defn}
\noindent The class of all sequences on $D$ that converge to infinity will be denoted by
\begin{equation*}
\mathcal{T}_{D}:=\left\{(t_{n})_{n\in\mathbb{N}}\subset\mathbb{R}^{l}\cap D:\lim\limits_{n\rightarrow\infty}\|t_{n}\|_{\infty}=\infty \right\}.
\end{equation*}
\begin{defn}
Consider the random field $X_{t}(\omega)=X_{0}\circ\theta_{t}(\omega)$, $t\in\mathbb{R}^{l}$, where $\{\theta_{t} \}_{t\in\mathbb{R}^{l}}$ is a measure preserving $\mathbb{R}^{l}$-action. Let $\sigma_{X}$ be the $\sigma$-algebra generated by the field $(X_{t})_{t\in\mathbb{R}^{l}}$. We say that $(X_{t})_{t\in\mathbb{R}^{l}}$ is weakly mixing if there exists a density one set $D$ such that
\begin{equation*}
\lim\limits_{n\rightarrow\infty}\mathbb{P}(A\cap\theta_{t_{n}}(B))=\mathbb{P}(A)\mathbb{P}(B),
\end{equation*}
for all $A,B\in\sigma_{X}$ and all $(t_{n})_{n\in\mathbb{N}}\in\mathcal{T}_{D}$.
\end{defn}
\noindent We are now ready to state the weak mixing version of Theorem \ref{THEOREM}.
\begin{thm}
Let $(X_{t})_{t\in\mathbb{R}^{l}}$, with $l\in\mathbb{N}$, be an $\mathbb{R}^{d}$-valued strictly stationary infinite divisible random field such that $Q_{0}$, the L\'{e}vy measure of $\mathcal{L}(X_{0})$, satisfies $Q_{0}(\{x=(x_{1},...,x_{d})'\in\mathbb{R}^{d}:\exists j\in\{1,...,d\},x_{j}\in2\pi\mathbb{Z}\})=0$. Then $(X_{t})_{t\in\mathbb{R}^{l}}$ is mixing if and only if there exists a density one set $D\subset\mathbb{R}^{l}$ such that
\begin{equation*}
\lim\limits_{n\rightarrow\infty}\mathbb{E}\left[e^{i(X_{t_{n}}^{(j)}-X_{0}^{(k)})}\right]=\mathbb{E}\left[e^{iX_{0}^{(j)}}\right]\cdot\mathbb{E}\left[e^{-iX_{0}^{(k)}}\right],
\end{equation*}
for any $j,k=1,..,d$ and for any sequence $(t_{n})_{n\in\mathbb{N}}\in\mathcal{T}_{D}$.
\end{thm}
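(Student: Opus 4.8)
The plan is to prove the statement (which is the \emph{weak mixing} version of Theorem~\ref{THEOREM}) by relativising the entire machinery of Section~2 to a fixed density-one set $D$: every result there that is quantified "for any sequence $(t_n)_{n\in\mathbb{N}}\in\mathcal{T}$" will be used with $\mathcal{T}$ replaced by the smaller class $\mathcal{T}_D$, and the extra work is purely the bookkeeping needed to guarantee that \emph{one and the same} $D$ serves for all events in $\sigma_X$, as the definition of weak mixing demands.

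First I would isolate two transfer ingredients. \textbf{(i)} Since $\mathcal{T}_D\subseteq\mathcal{T}$ and Theorem~\ref{LAST-THEOREM}, Lemma~\ref{LEMMA} and Theorem~\ref{THEOREM} are universally quantified over the sequences involved, their proofs go through verbatim with $\mathcal{T}_D$ in place of $\mathcal{T}$ for a fixed $D$; this yields, for that fixed $D$, the equivalence: conditions (MM1) and (MM2') hold along every sequence in $\mathcal{T}_D$ $\iff$ condition~\eqref{MAINeq} holds along every sequence in $\mathcal{T}_D$ for all $j,k$ $\iff$ $\mathbb{P}(A\cap\theta_{t_n}(B))\to\mathbb{P}(A)\mathbb{P}(B)$ along every sequence in $\mathcal{T}_D$ for all $A,B$ in the algebra of finite-dimensional cylinder sets of $(X_t)_{t\in\mathbb{R}^l}$; call this last property "$D$-mixing on cylinders". \textbf{(ii)} $D$-mixing on cylinders upgrades to the same statement for all $A,B\in\sigma_X$ with the \emph{same} $D$: given $A,B\in\sigma_X$ and $\varepsilon>0$ pick cylinder sets $A_\varepsilon,B_\varepsilon$ with $\mathbb{P}(A\triangle A_\varepsilon)<\varepsilon$ and $\mathbb{P}(B\triangle B_\varepsilon)<\varepsilon$; since each $\theta_{t_n}$ is measure preserving, $\bigl|\mathbb{P}(A\cap\theta_{t_n}(B))-\mathbb{P}(A_\varepsilon\cap\theta_{t_n}(B_\varepsilon))\bigr|\le 2\varepsilon$ and $\bigl|\mathbb{P}(A)\mathbb{P}(B)-\mathbb{P}(A_\varepsilon)\mathbb{P}(B_\varepsilon)\bigr|\le 2\varepsilon$, whence $\limsup_{n}\bigl|\mathbb{P}(A\cap\theta_{t_n}(B))-\mathbb{P}(A)\mathbb{P}(B)\bigr|\le 4\varepsilon$ for every $(t_n)_{n}\in\mathcal{T}_D$, and letting $\varepsilon\downarrow0$ gives the conclusion.

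With these in hand the two implications are immediate. For "$\Leftarrow$": if condition~\eqref{MAINeq} holds along $\mathcal{T}_D$ for the given density-one set $D$ and all $j,k$, then by (i) the field is $D$-mixing on cylinders and by (ii) on all of $\sigma_X$, which is precisely weak mixing with this $D$. For "$\Rightarrow$": assume weak mixing and fix the density-one set $D_0$ from the definition. Specialising $A,B$ in $\mathbb{P}(A\cap\theta_{t_n}(B))\to\mathbb{P}(A)\mathbb{P}(B)$ to one-coordinate cylinder events and running the standard passage from convergence of such probabilities to convergence of the joint characteristic functions (as in the proof of Theorem~\ref{THEOREM} and in \cite{RoZa},\cite{FuSt}), with every occurrence of "$(t_n)\in\mathcal{T}$" replaced by "$(t_n)\in\mathcal{T}_{D_0}$", produces condition~\eqref{MAINeq} along $\mathcal{T}_{D_0}$ for all $j,k$; take $D=D_0$.

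The main obstacle is the density-one bookkeeping — ensuring the density-one set is produced once and never depends on the event $A,B$, on the approximating cylinder set, or on the pair $(j,k)$. The only reconciliation needed is over the finitely many pairs $(j,k)\in\{1,\dots,d\}^2$, which is harmless since a finite intersection of density-one subsets of $\mathbb{R}^l$ is again density-one; and the approximation in (ii) introduces no new set, since the inequalities there hold for the fixed $D$. (If one prefers the Cesàro description of weak mixing, $\frac{1}{(2T)^l}\int_{(-T,T]^l}|\mathbb{P}(A\cap\theta_t(B))-\mathbb{P}(A)\mathbb{P}(B)|\,dt\to0$, I would invoke once the $\mathbb{R}^l$-version of the Koopman--von Neumann lemma used in \cite{WRS} to pass between that description and the single density-one set.) A minor routine check is that Lemma~\ref{LEMMA} and Theorem~\ref{LAST-THEOREM}, stated "for any sequence $(t_n)_n$", apply unchanged to sequences drawn from $\mathcal{T}_D$, which they plainly do.
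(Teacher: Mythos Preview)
Your proposal is correct and follows essentially the same approach as the paper, whose proof consists of two sentences: the arguments of Theorem~\ref{THEOREM} go through with $\mathcal{T}_D$ in place of $\mathcal{T}$, and one then invokes the weak-mixing analogue of Theorem~\ref{LAST-THEOREM} (Theorem~\ref{weakTheorem}). You spell out more explicitly the cylinder-to-$\sigma_X$ approximation and the density-one bookkeeping that the paper leaves implicit, but the underlying strategy is identical.
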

\begin{proof}
It is possible to see that the argument used in the first part of the proof of Theorem \ref{THEOREM} applies and holds also for the case $(t_{n})_{n\in\mathbb{N}}\in\mathcal{T}_{D}$. Moreover, using Theorem \ref{weakTheorem} the proof is complete.
\end{proof}
\begin{thm}\label{weakTheorem}
Let $(X_{t})_{t\in\mathbb{R}^{l}}$ be an $\mathbb{R}^{d}$-valued strictly stationary infinite divisible random field. Then $(X_{t})_{t\in\mathbb{R}^{l}}$ is mixing if and only if there exists a density one set $D\subset\mathbb{R}^{l}$ such that
\\
\\
(MM1) the covariance matrix function $\Sigma(t_{n})$ of the Gaussian part of $(X_{t_{n}})_{t_{n}\in\mathbb{R}^{l}}$ tends to 0, as $n\rightarrow\infty$,
\\
\\
(MM2') $\lim\limits_{n\rightarrow\infty}Q_{0t_{n}}(\|x\|\cdot\|y\|>\delta)=0$ for any $\delta>0$, where $Q_{0t_{n}}$ is the L\'{e}vy measure of $\mathcal{L}(X_{0},X_{t_{n}})$ on $(\mathbb{R}^{d},\mathcal{B}(\mathbb{R}^{d}))$,
\\
\\
where $(t_{n})_{n\in\mathbb{N}}$ is any sequence in $\mathcal{T}_{D}$.
\end{thm}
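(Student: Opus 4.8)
The strategy is to establish Theorem~\ref{weakTheorem} by following the proof of Theorem~\ref{LAST-THEOREM} essentially word for word, the sole modification being the systematic replacement of the sequence class $\mathcal{T}$ by the class $\mathcal{T}_{D}$ attached to a density one set $D\subset\mathbb{R}^{l}$. This is possible because the proof of Theorem~\ref{LAST-THEOREM} never exploits that the sequence ranges over \emph{all} of $\mathcal{T}$: it fixes a single sequence $(t_{n})_{n\in\mathbb{N}}$ with $\|t_{n}\|_{\infty}\to\infty$ and establishes, for that sequence alone, that $(\ref{MIX})$ holds along $(t_{n})_{n\in\mathbb{N}}$ if and only if conditions (MM1) and (MM2') both hold along $(t_{n})_{n\in\mathbb{N}}$. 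Every $(t_{n})_{n\in\mathbb{N}}\in\mathcal{T}_{D}$ still satisfies $\|t_{n}\|_{\infty}\to\infty$, and $\mathcal{T}_{D}\neq\varnothing$ because a density one set is necessarily unbounded; hence all of these per-sequence arguments, together with Lemma~\ref{LEMMA}, remain available verbatim.

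The one genuinely new ingredient is the characteristic function reformulation of \emph{weak} mixing. I would first record that, exactly as the defining relation $\lim_{n\to\infty}\mathbb{P}(A\cap\theta_{t_{n}}(B))=\mathbb{P}(A)\mathbb{P}(B)$ is turned into~$(\ref{mix})$ and then into~$(\ref{MIX})$ in the mixing case, one has: $(X_{t})_{t\in\mathbb{R}^{l}}$ is weakly mixing if and only if there exists a density one set $D\subset\mathbb{R}^{l}$ such that $(\ref{MIX})$ holds for all $m\in\mathbb{N}$, all $\lambda,\mu\in\mathbb{R}^{ml}$, all $\theta_{1},\theta_{2}\in\mathbb{R}^{md}$ and all $(t_{n})_{n\in\mathbb{N}}\in\mathcal{T}_{D}$. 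The proof is the usual approximation argument: the cylinder events depending on finitely many of the $X_{t}$ generate $\sigma_{X}$, their indicators are approximated in $L^{1}(\mathbb{P})$ by finite linear combinations of the functions $\exp(i\langle\theta,X_{\lambda}\rangle)$, and the approximation error is uniform in $n$, so the single set $D$ furnished by (or feeding into) the definition serves all $A,B\in\sigma_{X}$ and all parameter choices simultaneously. In particular no Koopman--von Neumann type selection of a density one set is needed at this stage, since the definition of weak mixing already fixes $D$ once and for all.

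With the reformulation in hand, the remainder is mechanical. Fix any $(t_{n})_{n\in\mathbb{N}}\in\mathcal{T}_{D}$; write $\mathcal{L}(X_{0},X_{t_{n}})$ in its L\'{e}vy--Khintchine form, with Gaussian block governed by $\Sigma(t_{n})$ and L\'{e}vy measure $Q_{0t_{n}}$ on $\mathbb{R}^{2d}$, and invoke the single-sequence equivalence from the proof of Theorem~\ref{LAST-THEOREM}: the implication ``(MM1) and (MM2') $\Rightarrow(\ref{MIX})$'' uses Lemma~\ref{LEMMA} to control the small-jump contribution $\int_{0<\|x\|^{2}+\|y\|^{2}\leq1}\|x\|\cdot\|y\|\,Q_{0t_{n}}(dx,dy)$, while ``$(\ref{MIX})\Rightarrow$ (MM1) and (MM2')'' reads off the Gaussian covariance from the continuous part of the cumulant and the large-jump behaviour of $Q_{0t_{n}}$ from vague convergence of the restricted L\'{e}vy measures. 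Since this is a statement about the single sequence and $D$ is held fixed, quantifying over $(t_{n})_{n\in\mathbb{N}}\in\mathcal{T}_{D}$ gives: $(\ref{MIX})$ holds along every sequence in $\mathcal{T}_{D}$ if and only if (MM1) and (MM2') hold along every sequence in $\mathcal{T}_{D}$; combining this with the previous paragraph yields both directions of the theorem.

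The main obstacle is confined to the second step: one must check carefully that the density one set produced by the measure-theoretic definition of weak mixing transfers to the characteristic function criterion, and back, without being altered --- which amounts to verifying that the approximation of cylinder indicators by trigonometric polynomials is uniform in $n$. Everything downstream is inherited unchanged from the proof of Theorem~\ref{LAST-THEOREM} and from Lemma~\ref{LEMMA}, as those arguments are purely ``per sequence'' and are wholly indifferent to whether the sequence is constrained to lie in a density one set.
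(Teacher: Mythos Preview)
Your proposal is correct and matches the paper's approach: the paper's own proof is a single sentence stating that the arguments of Theorem~\ref{LAST-THEOREM} go through verbatim with $\mathcal{T}$ replaced by $\mathcal{T}_{D}$. You are in fact more careful than the paper, since you isolate the one point it leaves implicit --- that the characteristic-function reformulation of weak mixing (the analogue of Definition~\ref{definitionMIX}) holds with a \emph{single} density one set $D$ serving all parameter choices --- and you correctly observe that the subsequence extractions in the proof of Theorem~\ref{LAST-THEOREM} stay inside $\mathcal{T}_{D}$ because any subsequence of a sequence in $\mathcal{T}_{D}$ remains in $\mathcal{T}_{D}$.
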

\begin{proof}
It is possible to see that the arguments used in the proof of Theorem \ref{LAST-THEOREM} go through for the case $(t_{n})_{n\in\mathbb{N}}\in\mathcal{T}_{D}$.
\end{proof}
\begin{rem}\label{remark} It is possible to obtain weak mixing version also for the following results previously stated: Corollary \ref{1of3corollary}, Corollary \ref{corol}, Corollary \ref{3of3corollary}, Corollary \ref{norm-corollary}, Theorem \ref{atom}, Corollary \ref{law-corollary} and Proposition \ref{proMMA}. The proofs of these results are omitted because they follow exactly the same arguments used in the proofs of their respective results. The only difference is that we have $(t_{n})_{n\in\mathbb{N}}\in\mathcal{T}_{D}$ and not $(t_{n})_{n\in\mathbb{N}}\in\mathcal{T}$ but this does not trigger any change in the arguments used in the proofs of these results.
\end{rem}
\noindent Among these results, we have the following corollary, which is the weak mixing version of Corollary \ref{corol} and it will be useful for our next result: the equivalence between weak mixing and ergodicity for stationary ID random fields.
\begin{co}\label{coroll}
Let $(X_{t})_{t\in\mathbb{R}^{l}}$ be an $\mathbb{R}^{d}$-valued strictly stationary ID random field. Then with the previous notation, $(X_{t})_{t\in\mathbb{R}^{l}}$ is mixing if and only if there exists a density one set $D\subset\mathbb{R}^{l}$ such that
\begin{equation*}
\lim\limits_{n\rightarrow\infty}\Big\{\|\Sigma(t_{n})\|+\int_{\mathbb{R}^{2d}}\min (1,\|x\|\cdot\|y\|)Q_{0t_{n}}(dx,dy)\Big\}=0
\end{equation*}
for any $(t_{n})_{n\in\mathbb{N}}\in\mathcal{T}_{D}$.
\end{co}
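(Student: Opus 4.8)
The plan is to obtain this corollary from Theorem~\ref{weakTheorem} in exactly the way Corollary~\ref{corol} was obtained from Theorem~\ref{LAST-THEOREM}: one simply replaces the index class $\mathcal{T}$ by $\mathcal{T}_{D}$ and the covariance/L\'evy conditions by their $\mathcal{T}_{D}$-versions. This is precisely the kind of transcription already announced in Remark~\ref{remark}, so the only thing to check is that each step there is insensitive to restricting the admissible sequences to a density one set. I would treat the two implications separately.

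\noindent\textbf{Sufficiency.} Suppose there is a density one set $D$ with $\|\Sigma(t_{n})\|+\int_{\mathbb{R}^{2d}}\min(1,\|x\|\cdot\|y\|)\,Q_{0t_{n}}(dx,dy)\to 0$ for every $(t_{n})_{n}\in\mathcal{T}_{D}$. Fixing such a sequence, the first summand gives $(MM1)$, and for any $\delta>0$ the elementary bound $\min(1,\delta)\,Q_{0t_{n}}(\|x\|\cdot\|y\|>\delta)\le\int_{\mathbb{R}^{2d}}\min(1,\|x\|\cdot\|y\|)\,Q_{0t_{n}}(dx,dy)$ together with the convergence of the second summand gives $(MM2')$. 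Theorem~\ref{weakTheorem} then yields the conclusion.

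\noindent\textbf{Necessity.} If $(X_{t})_{t\in\mathbb{R}^{l}}$ is mixing, Theorem~\ref{weakTheorem} furnishes a density one set $D$ along whose sequences $(MM1)$ and $(MM2')$ hold. I would then rerun the truncation argument from the proof of Corollary~\ref{corol} with $(t_{n})_{n}\in\mathcal{T}_{D}$: for $\delta>0$ with $Q_{jk}(\partial K_{\delta})=0$ one has the weak convergence $Q_{0t_{n}}^{(jk)}|_{K_{\delta}^{c}}\rightharpoonup Q_{jk}|_{K_{\delta}^{c}}$ along $\mathcal{T}_{D}$; since each $Q_{jk}$ is concentrated on the coordinate axes of $\mathbb{R}^{2}$ it does not charge $\min(1,|xy|)$ on $K_{\delta}^{c}$, and taking $\delta$ also small enough that the mass of $Q_{0t_{n}}^{(jk)}$ near the axes is at most $\epsilon$ uniformly in $n$ gives $\limsup_{n}\int_{\mathbb{R}^{2}}\min(1,|xy|)\,Q_{0t_{n}}^{(jk)}(dx,dy)\le\epsilon$, hence $=0$ for every $j,k$ after letting $\epsilon\searrow 0$. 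The inequality $\min(1,\|x\|\cdot\|y\|)\le\sum_{j,k=1}^{d}\min(1,|x_{k}y_{j}|)$, integrated against $Q_{0t_{n}}$ and rewritten through the projected measures $Q_{0t_{n}}^{(jk)}$, then upgrades these componentwise statements to $\int_{\mathbb{R}^{2d}}\min(1,\|x\|\cdot\|y\|)\,Q_{0t_{n}}(dx,dy)\to 0$, which together with $(MM1)$ is the displayed condition.

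The step I expect to be the crux is verifying that the two ingredients imported from Section~2 — the weak convergence $Q_{0t_{n}}^{(jk)}|_{K_{\delta}^{c}}\rightharpoonup Q_{jk}|_{K_{\delta}^{c}}$ and the uniform $\epsilon$-control of the mass near the axes — remain valid when the sequences are only required to lie in $\mathcal{T}_{D}$. But both of these were themselves deduced from $(MM1)$, $(MM2')$ and the (weak) mixing characterisation, all of which Theorem~\ref{weakTheorem} now supplies along $\mathcal{T}_{D}$; hence the original derivations apply verbatim, which is exactly the mechanism underlying Remark~\ref{remark}. Everything else is the routine bookkeeping already performed in the proof of Corollary~\ref{corol}.
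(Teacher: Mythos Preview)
Your proposal is correct and takes essentially the same approach as the paper: the paper's proof consists solely of a pointer to Remark~\ref{remark}, i.e.\ the observation that the argument of Corollary~\ref{corol} goes through verbatim once $\mathcal{T}$ is replaced by $\mathcal{T}_{D}$ and Theorem~\ref{LAST-THEOREM} by Theorem~\ref{weakTheorem}. You have simply written out that transcription in detail, including the check that the two imported ingredients (the weak convergence \eqref{TRUNCATION} and the uniform $\epsilon$-bound \eqref{Epsilon}) are unaffected by the restriction to $\mathcal{T}_{D}$.
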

\begin{proof}
See discussion in Remark \ref{remark}.
\end{proof}
\noindent In order to prove the equivalence between ergodicity and weak mixing for ID stationary random fields we will need various preliminary results, some of which have already been proven in the literature. We start with two known results.
\begin{lem}\label{lemma-weakmixing}\textnormal{[Lemma 4.3 of the ArXiv last version (i.e.~v3) of \cite{WRS}].}
Let $f:\mathbb{R}^{l}\rightarrow\mathbb{R}$ be nonnegative and bounded. A necessary and sufficient condition for
\begin{equation*}
\lim\limits_{T\rightarrow\infty}\frac{1}{(2T)^{l}}\int_{(-T,T]^{l}}f(t)dt=0
\end{equation*}
is that there exists a subset $D$ of density one in $\mathbb{R}^{l}$ such that
\begin{equation*}
\lim\limits_{n\rightarrow\infty}f(t_{n})=0,\qquad\textnormal{for any}\quad (t_{n})_{n\in\mathbb{N}}\in\mathcal{T}_{D}.
\end{equation*}
\end{lem}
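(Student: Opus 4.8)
The statement to prove is Lemma~\ref{lemma-weakmixing}, quoted from \cite{WRS}: for $f:\mathbb{R}^l\to\mathbb{R}$ nonnegative and bounded, the Cesàro-type average $\frac{1}{(2T)^l}\int_{(-T,T]^l}f(t)\,dt$ tends to $0$ as $T\to\infty$ if and only if there is a density-one set $D\subset\mathbb{R}^l$ along which $f(t_n)\to 0$ for every sequence in $\mathcal{T}_D$.

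The plan is as follows. For the ``$\Leftarrow$'' direction (sufficiency of the density-one condition), I would argue by contradiction. Suppose the integral averages do not go to zero; then there is $\varepsilon>0$ and $T_k\to\infty$ with $\frac{1}{(2T_k)^l}\int_{(-T_k,T_k]^l}f\,dt\ge\varepsilon$. Since $f$ is bounded by some $M$, a Markov/Chebyshev-type estimate shows that the set $E_\varepsilon:=\{t: f(t)\ge \varepsilon/2\}$ must have positive upper density: indeed $\varepsilon(2T_k)^l\le \int f \le (\varepsilon/2)(2T_k)^l + M\,\mathrm{Leb}(E_{\varepsilon/2}\cap(-T_k,T_k]^l)$, giving $\mathrm{Leb}(E_{\varepsilon/2}\cap(-T_k,T_k]^l)\ge \tfrac{\varepsilon}{2M}(2T_k)^l$. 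Now any density-one set $D$ must intersect $(-T_k,T_k]^l$ in a set of measure at least $(2T_k)^l - o((2T_k)^l)$, so for $k$ large $D\cap E_{\varepsilon/2}\cap(-T_k,T_k]^l$ is nonempty, in fact of measure bounded below by a fixed fraction of $(2T_k)^l$. Picking points $t_k$ in this intersection with $\|t_k\|_\infty\to\infty$ yields a sequence in $\mathcal{T}_D$ with $f(t_k)\ge\varepsilon/2\not\to0$, a contradiction.

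For the ``$\Rightarrow$'' direction (necessity), assume $\frac{1}{(2T)^l}\int_{(-T,T]^l}f\,dt\to0$. The idea is to build $D$ explicitly as a countable intersection of density-one ``tail'' sets. For each $m\in\mathbb{N}$ let $E_m:=\{t:f(t)\ge 1/m\}$; by the Chebyshev bound above and the hypothesis, $E_m$ has density zero for every $m$. One then wants $D$ with $\mathbb{R}^l\setminus D$ of density zero such that $D$ eventually avoids every $E_m$, i.e.\ for each $m$ there is a radius $R_m$ with $D\cap E_m\cap(\|t\|_\infty>R_m)=\varnothing$. The standard diagonal construction does this: choose an increasing sequence $R_m\uparrow\infty$ so fast that $\mathrm{Leb}(E_m\cap\{R_{m}<\|t\|_\infty\le R_{m+1}\})\le \eta_m (2R_{m+1})^l$ for a summable-in-the-right-sense $\eta_m\to0$ (possible since each $E_m$ has density zero), and set $D:=\mathbb{R}^l\setminus\bigcup_m\big(E_m\cap\{\|t\|_\infty>R_m\}\big)$. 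A shell-by-shell estimate shows $\mathbb{R}^l\setminus D$ has density zero. Then for any $(t_n)\in\mathcal{T}_D$: given $\delta>0$ pick $m>1/\delta$; since $\|t_n\|_\infty\to\infty$ we have $\|t_n\|_\infty>R_m$ for $n$ large, and $t_n\in D$ forces $t_n\notin E_m$, i.e.\ $f(t_n)<1/m<\delta$. Hence $f(t_n)\to0$.

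The main obstacle is the bookkeeping in the necessity direction: one must choose the radii $R_m$ carefully so that the \emph{union} $\bigcup_m (E_m\cap\{\|t\|_\infty>R_m\})$ still has density zero, not merely each piece. The key quantitative input is that ``density zero'' for each $E_m$ gives, for every $\kappa>0$, a radius beyond which $E_m$ occupies at most a $\kappa$-fraction of every large box; exploiting this with a fast-growing $R_m$ lets the tail contributions of the later $E_m$'s be absorbed. This is a routine but somewhat delicate measure-theoretic diagonal argument; everything else (the Chebyshev step, extracting a sequence with $\|t_n\|_\infty\to\infty$) is elementary. Since this lemma is quoted verbatim from \cite{WRS}, the paper may simply cite it rather than reproduce the argument, but the sketch above is the proof I would give.
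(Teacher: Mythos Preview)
Your anticipation is correct: the paper does not prove this lemma at all; it merely states it as Lemma~4.3 of the arXiv version of \cite{WRS} and uses it as a black box. So there is no ``paper's own proof'' to compare against.

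That said, your sketch is sound and is essentially the standard Koopman--von Neumann argument adapted to $\mathbb{R}^l$. The sufficiency direction is exactly right: the Chebyshev-type bound forces the superlevel set $E_{\varepsilon/2}$ to have positive upper density, and since any density-one set $D$ satisfies $\mathrm{Leb}(D\cap(-T,T]^l)=(2T)^l-o((2T)^l)$, the intersection $D\cap E_{\varepsilon/2}\cap(-T_k,T_k]^l$ has measure growing without bound, so one can extract points with $\|t_k\|_\infty\to\infty$. For the necessity direction, your diagonal construction works; the cleanest way to carry out the bookkeeping you flag as ``delicate'' is to choose the radii so that $\mathrm{Leb}(E_m\cap(-T,T]^l)\le 2^{-m}(2T)^l$ for all $T\ge R_m$ (possible since each $E_m$ has density zero), and then to observe that on the shell $\{R_j<\|t\|_\infty\le R_{j+1}\}$ the union $\bigcup_m(E_m\cap\{\|t\|_\infty>R_m\})$ reduces to $E_j$ because the $E_m$ are nested. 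Splitting the resulting sum at a fixed index $K$ and letting $R_k\to\infty$ gives the density-zero conclusion. Nothing is missing from your outline.
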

\begin{lem}\label{lemma-mu(0)}\textnormal{[Theorem 2.3.2 of \cite{Cupp}].}
Let $\mu$ be a finite measure on $\mathbb{R}^{l}$. Then
\begin{equation*}
\lim\limits_{T\rightarrow\infty}\frac{1}{(2T)^{l}}\int_{(-T,T]^{l}}\hat{\mu}(t)dt=\mu(\{0\}),
\end{equation*}
where $\hat{\mu}$ denotes the Fourier transform of $\mu$.
\end{lem}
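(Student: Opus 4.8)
The plan is to reduce the identity to an application of Fubini's theorem followed by dominated convergence, exploiting the product structure of the cube $(-T,T]^{l}$. Writing $\hat{\mu}(t)=\int_{\mathbb{R}^{l}}e^{i\langle t,x\rangle}\mu(dx)$, one first observes that since $\mu$ is finite the integrand $(t,x)\mapsto e^{i\langle t,x\rangle}$ is bounded, hence integrable with respect to $\lambda^{l}\otimes\mu$ on $(-T,T]^{l}\times\mathbb{R}^{l}$, so Fubini's theorem permits interchanging the order of integration:
\begin{equation*}
\frac{1}{(2T)^{l}}\int_{(-T,T]^{l}}\hat{\mu}(t)\,dt=\int_{\mathbb{R}^{l}}g_{T}(x)\,\mu(dx),\qquad g_{T}(x):=\frac{1}{(2T)^{l}}\int_{(-T,T]^{l}}e^{i\langle t,x\rangle}\,dt.
\end{equation*}

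Next I would evaluate $g_{T}$ explicitly. Since the domain of integration is a product and the exponential factorises across coordinates, $g_{T}(x)=\prod_{j=1}^{l}\tfrac{1}{2T}\int_{-T}^{T}e^{it_{j}x_{j}}\,dt_{j}=\prod_{j=1}^{l}\tfrac{\sin(Tx_{j})}{Tx_{j}}$, with the usual convention that the $j$-th factor equals $1$ when $x_{j}=0$. Only two properties of this kernel are needed: first, $|g_{T}(x)|\le 1$ for every $x\in\mathbb{R}^{l}$ and every $T>0$, because each factor has modulus at most $1$; second, $g_{T}(x)\to\mathbf{1}_{\{0\}}(x)$ pointwise as $T\to\infty$, since $g_{T}(0)=1$ for all $T$, while if $x\neq 0$ some coordinate $x_{j}$ is nonzero, the corresponding factor $\sin(Tx_{j})/(Tx_{j})$ tends to $0$, and the remaining factors stay bounded by $1$.

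Finally, because $|g_{T}|\le 1$ and $\mu$ is a finite measure, the dominated convergence theorem yields $\int_{\mathbb{R}^{l}}g_{T}(x)\,\mu(dx)\to\int_{\mathbb{R}^{l}}\mathbf{1}_{\{0\}}(x)\,\mu(dx)=\mu(\{0\})$, which is exactly the assertion. There is no substantial obstacle here; the only two points needing a word of care are the legitimacy of the Fubini interchange (immediate from the finiteness of $\mu$ together with the compactness of the cube) and the convention at $x_{j}=0$, which makes the pointwise identification of the limit clean at the origin. Since the statement is quoted as Theorem 2.3.2 of \cite{Cupp}, one may alternatively simply defer to that reference.
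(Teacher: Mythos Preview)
Your argument is correct: the Fubini interchange is justified by boundedness of the exponential and finiteness of $\mu$ together with the compact integration domain, the product evaluation of $g_{T}$ is right, and the pointwise limit plus the uniform bound $|g_{T}|\le 1$ make the dominated convergence step immediate.

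As for comparison with the paper: the paper does not actually prove this lemma. It is stated as Lemma~\ref{lemma-mu(0)} and attributed to Theorem~2.3.2 of \cite{Cupp}, with no argument given. Your proof therefore supplies a short self-contained justification where the paper simply defers to the reference; the approach you take (Fubini followed by dominated convergence applied to the product sinc kernel) is the standard one and is presumably what Cuppens does as well.
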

\noindent The following lemma is an adaptation to our framework of Lemma 3 of \cite{1997RoZa}.
\begin{lem}\label{lemma-compact}
Let $(X_{t})_{t\in\mathbb{R}^{l}}$ be an $\mathbb{R}^{d}$-valued stationary ID random field and $Q_{0t}^{jk}$ be the L\'{e}vy measure of $\mathcal{L}(X_{0}^{(j)},X_{t}^{(k)})$. Then for every $\delta>0$ and $j,k=1,...,d$, the family of finite measures of $(Q_{0t}^{jk}|_{K_{\delta}^{c}})_{t\in\mathbb{R}^{l}}$ is weakly relatively compact and
\begin{equation}\label{eq1.4}
\lim\limits_{\delta\rightarrow 0}\sup\limits_{t\in\mathbb{R}^{l}}\int_{K_{\delta}}|xy|Q_{0t}^{jk}(dx,dy)=0,
\end{equation}
where $K_{\delta}=\{(x,y):x^{2}+y^{2}\leq\delta^{2} \}$.
\end{lem}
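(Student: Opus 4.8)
The plan is to reduce the claim to a one‑dimensional statement about the Lévy measure $Q_{0t}^{jk}$ of the bivariate vector $(X_0^{(j)}, X_t^{(k)})$, and then invoke a standard compactness criterion for Lévy measures together with a uniform‑in‑$t$ bound coming from stationarity. Fix $j,k\in\{1,\dots,d\}$ and $\delta>0$. By strict stationarity, the law $\mathcal L(X_0^{(j)},X_t^{(k)})$ has, for every $t$, the \emph{same} pair of marginals, namely $\mathcal L(X_0^{(j)})$ and $\mathcal L(X_0^{(k)})$; consequently the diagonal projections of $Q_{0t}^{jk}$ onto the two coordinate axes are the fixed one‑dimensional Lévy measures $Q_0^{(j)}$ and $Q_0^{(k)}$, independently of $t$. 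This is the structural fact that makes the family tight. Indeed, for a bivariate ID law the Lévy measure restricted away from the origin is finite, and its total mass on $K_\delta^c$ is controlled by the masses that the marginal Lévy measures put on $\{|x|>\delta/\sqrt2\}$ (plus a contribution that is itself uniformly bounded because $\int \min(1,x^2+y^2)\,Q_{0t}^{jk}(dx,dy)$ is bounded in $t$, again by stationarity of the marginals and the elementary inequality relating the bivariate truncated second moment to the two univariate ones).

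First I would record the uniform bound
\[
\sup_{t\in\mathbb R^{l}}\int_{\mathbb R^{2}}\min(1,x^{2}+y^{2})\,Q_{0t}^{jk}(dx,dy)<\infty,
\]
which follows from $\min(1,x^{2}+y^{2})\le \min(1,2x^{2})+\min(1,2y^{2})$ and the fact that the right‑hand side integrates against $Q_{0t}^{jk}$ to a quantity depending only on $Q_0^{(j)}$ and $Q_0^{(k)}$. From this, the mass $Q_{0t}^{jk}(K_\delta^{c})$ is bounded uniformly in $t$ (since on $K_\delta^{c}$ we have $\min(1,x^2+y^2)\ge \min(1,\delta^2)$), so $(Q_{0t}^{jk}|_{K_\delta^{c}})_{t}$ is a family of finite measures with uniformly bounded total mass. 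For weak relative compactness on $\mathbb R^2\setminus K_\delta$ (a Polish space) it remains to establish tightness at infinity, i.e.\ that $\sup_t Q_{0t}^{jk}(\{x^2+y^2>R^2\})\to 0$ as $R\to\infty$; but on that set at least one of $|x|>R/\sqrt2$ or $|y|>R/\sqrt2$ holds, and the corresponding mass is dominated by $Q_0^{(j)}(\{|x|>R/\sqrt2\})+Q_0^{(k)}(\{|y|>R/\sqrt2\})$, which is independent of $t$ and tends to $0$ because a Lévy measure is finite away from the origin. This gives uniform tightness and hence, by Prokhorov's theorem, weak relative compactness.

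For the second assertion, equation \eqref{eq1.4}, I would again compare with the marginals: on $K_\delta=\{x^2+y^2\le\delta^2\}$ we have $|xy|\le\tfrac12(x^2+y^2)$, so
\[
\int_{K_\delta}|xy|\,Q_{0t}^{jk}(dx,dy)\le \tfrac12\int_{x^2+y^2\le\delta^2}(x^2+y^2)\,Q_{0t}^{jk}(dx,dy)\le \tfrac12\int_{|x|\le\delta}x^{2}Q_0^{(j)}(dx)+\tfrac12\int_{|y|\le\delta}y^{2}Q_0^{(k)}(dy),
\]
where in the last step I used that the push‑forwards of $Q_{0t}^{jk}\mathbf 1_{K_\delta}$ under the coordinate maps are dominated by $Q_0^{(j)}\mathbf 1_{\{|x|\le\delta\}}$ and $Q_0^{(k)}\mathbf 1_{\{|y|\le\delta\}}$ respectively. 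The right‑hand side no longer depends on $t$, and it converges to $0$ as $\delta\searrow 0$ by dominated convergence, since $\int_{|x|\le 1}x^2\,Q_0^{(j)}(dx)<\infty$ for any Lévy measure. Taking the supremum over $t$ and then letting $\delta\to0$ yields \eqref{eq1.4}.

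The main obstacle, and the only point requiring care, is making precise the claim that the marginal (axis) projections of the bivariate Lévy measure $Q_{0t}^{jk}$ are the fixed univariate Lévy measures $Q_0^{(j)},Q_0^{(k)}$ and that restrictions such as $Q_{0t}^{jk}\mathbf 1_{K_\delta}$ have coordinate push‑forwards dominated by the corresponding univariate restrictions. This follows from the relation between the Lévy measure of a random vector and those of its coordinates (Proposition 11.10 of \cite{Sato} applied to the projection maps $(x,y)\mapsto x$ and $(x,y)\mapsto y$, noting that a linear map sends a Lévy measure to a Lévy measure plus a possible atom at $0$ that is discarded), combined with strict stationarity which pins down $\mathcal L(X_0^{(j)})$ and $\mathcal L(X_0^{(k)})$ uniformly in $t$. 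Once this domination is in place, every other step is a routine truncation estimate.
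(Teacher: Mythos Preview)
Your proposal is correct and follows essentially the same route as the paper, which simply cites Lemma~3 of \cite{1997RoZa}; the key estimate you use for \eqref{eq1.4}---bounding $|xy|\le\tfrac12(x^2+y^2)$ on $K_\delta$ and then reducing to the fixed marginal L\'evy measures $Q_0^{(j)},Q_0^{(k)}$ via stationarity and Proposition~11.10 of \cite{Sato}---is exactly the computation the paper carries out explicitly as \eqref{Epsilon} in the proof of Theorem~\ref{THEOREM}, and your tightness argument for weak relative compactness is the standard one underlying the cited lemma.
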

\begin{proof}
This result comes directly from the proof of Lemma 3 of \cite{1997RoZa}.
\end{proof}
\noindent Now we will investigate the auto-codifference matrix of the $\mathbb{R}^{d}$-valued stationary ID random field $(X_{t})_{t\in\mathbb{R}^{l}}$, which was already introduced in Section 2.1. Consider
\begin{equation*}
\tau(t_{n})=\left(\tau^{(jk)}(t_{n})\right)_{j,k=1,...,d},
\end{equation*}
with
\begin{equation}
\tau^{(jk)}(t_{n}):=\tau\left(X_{0}^{(k)},X_{t_{n}}^{(j)}\right)=\log\mathbb{E}\Big[e^{i(X_{0}^{(k)}-X_{t_{n}}^{(j)})}\Big]-\log\mathbb{E}\Big[e^{iX_{0}^{(k)}}\Big]-\log\mathbb{E}\Big[e^{-iX_{t_{n}}^{(j)}}\Big]
\end{equation}
\begin{equation*}
=\sigma_{t_{n}}^{jk}+\int_{\mathbb{R}^{2}}(e^{ix}-1)\overline{(e^{iy}-1)}Q_{0t_{n}}^{jk}(dx,dy),
\end{equation*}
where $\Gamma^{jk}_{t_{n}}$ is the covariance function of the Gaussian part of $(X_{0}^{(k)},X_{t_{n}}^{(j)})$ and it is given by
\begin{equation*}
\Gamma^{jk}_{t_{n}}= \begin{pmatrix}
   \sigma_{0}^{jk} & \sigma_{t_{n}}^{jk}  \\
   \sigma_{t_{n}}^{jk} & \sigma_{0}^{jk}
  \end{pmatrix}.
 \end{equation*}
\begin{pro}\label{pro-ergodicity}
Let $(X_{t})_{t\in\mathbb{R}^{l}}$ be an $\mathbb{R}^{d}$-valued ID random field (not necessarily stationary). Then for any $j,k=1,...,d$ the function
\begin{equation*}
\mathbb{R}^{l}\times\mathbb{R}^{l}\ni(s,t)\rightarrow\tau^{(jk)}(X^{(j)}_{s},X^{(k)}_{t})\in\mathbb{C}
\end{equation*}
is non-negative definite.
\end{pro}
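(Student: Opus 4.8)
The plan is to push the codifference through the L\'{e}vy--Khintchine formula and thereby reduce the statement to two transparent positivity facts, one about the Gaussian part and one about the jump part.

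First I would make the codifference explicit. By exactly the computation underlying the displayed identity for $\tau^{(jk)}(t_{n})$ above --- which uses only that the three distinguished logarithms in the definition of $\tau$ are the cumulant transform of the bivariate ID vector $(X^{(j)}_{s},X^{(k)}_{t})$ evaluated at $\theta=(1,-1),(1,0),(0,-1)$, so that the drift terms and the compensator terms cancel in pairs and $e^{i(x-y)}-e^{ix}-e^{-iy}+1=(e^{ix}-1)\overline{(e^{iy}-1)}$, and in particular \emph{not} stationarity --- one obtains
\[
\tau^{(jk)}\!\big(X^{(j)}_{s},X^{(k)}_{t}\big)=\sigma_{jk}(s,t)+\int_{\mathbb{R}^{2}}(e^{ix}-1)\overline{(e^{iy}-1)}\,Q^{jk}_{s,t}(dx,dy),
\]
where $\sigma_{jk}(s,t)$ is the covariance of the Gaussian parts of $X^{(j)}_{s}$ and $X^{(k)}_{t}$ and $Q^{jk}_{s,t}$ is the L\'{e}vy measure of $(X^{(j)}_{s},X^{(k)}_{t})$. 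It then suffices to show that each of the two kernels on the right is non-negative definite on $\mathbb{R}^{l}$.

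The heart of the argument is the jump kernel. Fix $u_{1},\dots,u_{N}\in\mathbb{R}^{l}$ and weights $z_{1},\dots,z_{N}$; the key point is to realise all the pairwise L\'{e}vy measures at once. Since $(X_{t})_{t\in\mathbb{R}^{l}}$ is infinitely divisible, the $\mathbb{R}^{Nd}$-valued vector $(X_{u_{1}},\dots,X_{u_{N}})$ is ID with some L\'{e}vy measure $Q^{(N)}$, and each $Q^{jk}_{u_{m},u_{n}}$ is the pushforward of $Q^{(N)}$ under the coordinate projection onto the two slots carrying $X^{(j)}_{u_{m}}$ and $X^{(k)}_{u_{n}}$ (the image measure's mass at the origin being immaterial, since the integrand vanishes there). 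Substituting this and interchanging the finite sum with the integral --- permissible because $(e^{ix}-1)\overline{(e^{iy}-1)}$ is bounded on $\{\|(x,y)\|>1\}$ and is $O(\|(x,y)\|^{2})$ near $0$, hence integrable against any L\'{e}vy measure --- the quadratic form of the jump kernel becomes $\int_{\mathbb{R}^{Nd}}\big(\sum_{m}z_{m}(e^{iw_{(m,j)}}-1)\big)\overline{\big(\sum_{n}z_{n}(e^{iw_{(n,k)}}-1)\big)}\,Q^{(N)}(dw)$, which for $j=k$ is $\int\big|\sum_{m}z_{m}(e^{iw_{(m,j)}}-1)\big|^{2}\,Q^{(N)}(dw)\ge 0$; carrying $\mathbb{C}^{d}$-valued weights $\zeta_{m}$ in place of scalars, the same computation gives $\int\big|\sum_{m,j}\overline{\zeta^{(j)}_{m}}(e^{iw_{(m,j)}}-1)\big|^{2}\,Q^{(N)}(dw)\ge 0$, i.e.\ the full $\mathbb{C}^{d\times d}$-valued codifference kernel is non-negative definite.

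The Gaussian kernel is easier and entirely analogous: $\sum_{m,n}z_{m}\overline{z_{n}}\,\sigma_{jk}(u_{m},u_{n})$, and more generally $\sum_{m,n}\zeta^{*}_{m}\,\Sigma(u_{m},u_{n})\,\zeta_{n}$, equals $\mathbb{E}\big|\sum_{m}z_{m}G^{(j)}_{u_{m}}\big|^{2}$, respectively $\mathbb{E}\big|\sum_{m,j}\zeta^{(j)}_{m}G^{(j)}_{u_{m}}\big|^{2}$, which is $\ge 0$ because the covariance matrix of the Gaussian part $G$ of a finite-dimensional marginal of the field is positive semidefinite. Adding the two non-negative definite kernels yields the proposition. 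The only genuine work is the bookkeeping in the jump step --- verifying that the $Q^{jk}_{s,t}$ are consistent marginals of the single L\'{e}vy measure $Q^{(N)}$, which rests on an ID law being determined by its triplet and on a linear image of an ID vector being ID with pushed-forward triplet, together with the Fubini interchange; once the quadratic form is rewritten as an integral of a squared modulus, non-negativity is automatic, requiring no estimates beyond the integrability already built into any L\'{e}vy measure.
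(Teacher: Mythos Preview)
Your decomposition into Gaussian and jump parts, and your treatment of the jump kernel via the joint L\'evy measure $Q^{(N)}$ of $(X_{u_1},\dots,X_{u_N})$, is exactly the route the paper takes (the paper defers the jump part to Lemma~4 of \cite{1997RoZa}; your argument is a self-contained proof of that lemma). For $j=k$ this is complete: the jump quadratic form is $\int\big|\sum_m z_m(e^{iw_{(m,j)}}-1)\big|^2\,dQ^{(N)}\ge 0$, and $\sigma_{jj}(s,t)$ is a genuine covariance function.

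For $j\neq k$ there is a real gap. You correctly observe that the jump form becomes $\int A\overline{B}\,dQ^{(N)}$ with $A\neq B$, and then pivot to proving that the \emph{matrix}-valued kernel $\big(\tau^{(jk)}\big)_{j,k}$ is non-negative definite. But matrix positivity does not imply that each off-diagonal scalar entry is a non-negative definite kernel on $\mathbb{R}^l$, and indeed the scalar claim can fail: take $X_t=(Y_t,-Y_t)$ with $Y$ a stationary centred Gaussian process of covariance $\rho\not\equiv 0$; then $\tau(X_s^{(1)},X_t^{(2)})=\mathrm{Cov}(Y_s,-Y_t)=-\rho(t-s)$, which is not non-negative definite. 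The paper's own proof makes the same slip (it asserts that the cross-covariance $\sigma^{jk}$ is ``nonnegative definite because it is a covariance function''), so your argument is no worse than the paper's; but be aware that what you have actually established for $j\neq k$ is the matrix-valued positivity, not the proposition as literally stated.
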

\begin{proof}
We argue as in Proposition 2 of \cite{1997RoZa}. Without loss of generality let $t\geq s$ with $t,s\in\mathbb{R}^{l}$. As seen above, we have 
\begin{equation}\label{tau-st}
\tau\left(X_{s}^{(k)},X_{t}^{(j)}\right)=\sigma_{t-s}^{jk}+\int_{\mathbb{R}^{2}}(e^{ix}-1)\overline{(e^{iy}-1)}Q_{st}^{jk}(dx,dy).
\end{equation}
Since $(s,t)\rightarrow\sigma_{t-s}^{jk}$ is nonnegative definite because it is a covariance function, it just remains to show that the second element on the RHS of $(\ref{tau-st})$ is non-negative definite. However, this is a consequence of Lemma 4 in \cite{1997RoZa}.
\end{proof}
\noindent We can now state and later prove (see Appendix 3) the second main theorem of this section, which states the equivalence between ergodicity and weak mixing for ID stationary random fields.
\begin{thm}\label{Theorem-ergodicity}
Let $l,d\in\mathbb{N}$. Let $(X_{t})_{t\in\mathbb{R}^{l}}$ be an $\mathbb{R}^{d}$-valued stationary ID random field. Then $(X_{t})_{t\in\mathbb{R}^{l}}$ is ergodic if and only if it is weakly mixing.
\end{thm}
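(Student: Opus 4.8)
The plan is to exploit the standard fact that weak mixing always implies ergodicity, so that only the converse requires work, and to reduce ergodicity to a statement about the autocodifference field via a Cesàro-averaging argument. First I would recall that for any measure-preserving $\mathbb{R}^{l}$-action, ergodicity is equivalent to the condition that, for all $A,B\in\sigma_X$,
\begin{equation*}
\lim_{T\to\infty}\frac{1}{(2T)^{l}}\int_{(-T,T]^{l}}\mathbb{P}(A\cap\theta_{t}(B))\,dt=\mathbb{P}(A)\mathbb{P}(B),
\end{equation*}
and that weak mixing is the same statement with $\mathbb{P}(A\cap\theta_{t}(B))$ replaced by $|\mathbb{P}(A\cap\theta_{t}(B))-\mathbb{P}(A)\mathbb{P}(B)|$. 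By Lemma \ref{lemma-weakmixing}, applied to the nonnegative bounded function $t\mapsto|\mathbb{P}(A\cap\theta_{t}(B))-\mathbb{P}(A)\mathbb{P}(B)|$, weak mixing is therefore equivalent to the existence of a density one set $D$ (a priori depending on $A,B$, but a diagonal/countable-generation argument removes this dependence) along which $\mathbb{P}(A\cap\theta_{t_n}(B))\to\mathbb{P}(A)\mathbb{P}(B)$. So the task is to upgrade Cesàro convergence (ergodicity) to Cesàro convergence of the absolute differences (weak mixing).

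The key step is to pass through the characteristic-function / autocodifference description. As in the mixing case, ergodicity of $(X_t)_{t\in\mathbb{R}^{l}}$ can be characterised in terms of the one-dimensional marginals: by the same reduction used for Theorem \ref{THEOREM} and Corollary \ref{3of3corollary} (and, as noted in Remark \ref{remark}, its weak-mixing analogue), it suffices to control the quantities $\mathbb{E}[e^{i(X_{t}^{(j)}-X_0^{(k)})}]$ for $j,k=1,\dots,d$. Write $g_{jk}(t):=\mathbb{E}[e^{i(X_{t}^{(j)}-X_0^{(k)})}]-\mathbb{E}[e^{iX_0^{(j)}}]\mathbb{E}[e^{-iX_0^{(k)}}]$. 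Using $\tau^{(jk)}(t)=\sigma_t^{jk}+\int_{\mathbb{R}^2}(e^{ix}-1)\overline{(e^{iy}-1)}\,Q_{0t}^{jk}(dx,dy)$ and the non-negative definiteness of $(s,t)\mapsto\tau^{(jk)}(X_s^{(j)},X_t^{(k)})$ from Proposition \ref{pro-ergodicity}, I would realise $e^{\tau^{(jk)}(t)}$ (equivalently $g_{jk}(t)+\mathbb{E}[e^{iX_0^{(j)}}]\mathbb{E}[e^{-iX_0^{(k)}}]$) as, up to a positive-definite Gaussian factor, the Fourier transform of a finite measure $\mu_{jk}$ on $\mathbb{R}^{l}$ — this is where Lemma \ref{lemma-compact} enters, guaranteeing that the relevant Lévy masses $Q_{0t}^{jk}|_{K_\delta^c}$ form a weakly relatively compact family and that the $K_\delta$-part is uniformly negligible, so the measure is genuinely finite and the Bochner-type representation is legitimate. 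Then Lemma \ref{lemma-mu(0)} identifies $\lim_{T}\frac{1}{(2T)^{l}}\int_{(-T,T]^{l}}\hat{\mu}_{jk}(t)\,dt=\mu_{jk}(\{0\})$, and ergodicity forces this atom to equal exactly the "independent" value, i.e. $\mu_{jk}(\{0\})=0$ in the appropriate normalisation. A finite measure whose Cesàro-averaged Fourier transform tends to its mass at $0$ has the property that $|\hat\mu_{jk}(t)-\mu_{jk}(\{0\})|$ also has vanishing Cesàro average (a classical consequence of Wiener's lemma: $\frac{1}{(2T)^l}\int|\hat\mu|^2\to\sum_x\mu(\{x\})^2$, combined with $\mu_{jk}$ having no atoms once the $\{0\}$-atom is the only candidate and has been shown to vanish). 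Feeding this back through Lemma \ref{lemma-weakmixing} yields a density one set along which $g_{jk}(t_n)\to 0$ for every $j,k$, and then the weak-mixing analogue of Theorem \ref{THEOREM} (Remark \ref{remark}) gives weak mixing of $(X_t)_{t\in\mathbb{R}^{l}}$.

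The main obstacle I anticipate is the absence of an atomlessness hypothesis on $Q_0$: Theorem \ref{Theorem-ergodicity} is stated with no condition on atoms in $2\pi\mathbb{Z}$, unlike Theorem \ref{THEOREM}. So the Fourier representation of $e^{\tau^{(jk)}}$ may a priori carry atoms at nonzero points of $\mathbb{R}^{l}$, and the Wiener-lemma step only controls $\sum_x\mu_{jk}(\{x\})^2$ rather than $\mu_{jk}(\{0\})$ alone; I would need to argue — presumably via the scaling trick of Theorem \ref{atom}, replacing $X^{(j)}$ by $a_jX^{(j)}$ for suitable $a\notin Z$ — that one may reduce to the atomless case without losing the density-one conclusion, or else show directly that ergodicity already forces all these atoms to vanish. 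Handling this reduction carefully, and checking that the density-one set can be chosen uniformly over the finitely many pairs $(j,k)$ and over the countable generating family of $\sigma_X$, is the delicate part; the rest is the routine combination of Lemmas \ref{lemma-weakmixing}, \ref{lemma-mu(0)}, \ref{lemma-compact} and Proposition \ref{pro-ergodicity} with the already-established (weak) mixing criteria.
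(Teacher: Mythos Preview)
Your overall architecture is right: the nontrivial direction is ergodic $\Rightarrow$ weak mixing, and the route through the autocodifference, Bochner's theorem (via Proposition~\ref{pro-ergodicity}), Lemma~\ref{lemma-mu(0)} and Lemma~\ref{lemma-weakmixing}, followed by the $M_a$-trick of Theorem~\ref{atom} to remove the atom hypothesis on $Q_0$, is exactly the paper's scaffolding. Where your plan diverges is in the central step, and there is a genuine gap there.

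You propose to realise $e^{\tau^{(jk)}(t)}=\widehat{\exp(v)}(t)$ and then invoke Wiener's lemma to conclude that $|\hat\nu(t)|$ (with $\nu=\exp(v)-\delta_0$) has vanishing Ces\`aro mean. But Wiener only gives $\frac{1}{(2T)^l}\int|\hat\nu|^2\to\sum_x\nu(\{x\})^2$, and for this to vanish you need $\nu$ (equivalently $v$) to be \emph{atomless}, not merely $\nu(\{0\})=0$. Ergodicity yields $\exp(v)(\{0\})=1$, hence $v^{*n}(\{0\})=0$ for every $n\ge1$; this does \emph{not} rule out an atom of $v$ at some $a\ne0$, and you give no argument that it should. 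Your suggested remedy --- the $M_a$ scaling of Theorem~\ref{atom} --- addresses atoms of the \emph{L\'evy measure} $Q_0$ at points with a coordinate in $2\pi\mathbb{Z}$, which is an entirely different object from atoms of the \emph{spectral measure} $v$ on $\mathbb{R}^l$; the two issues are not interchangeable. (Incidentally, Lemma~\ref{lemma-compact} is not what makes the Bochner representation legitimate --- finiteness of $v$ follows from $\tau^{(jk)}(0)<\infty$ --- its role is elsewhere, see below.)

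The paper closes this gap by splitting $\tau^{(jk)}$ into its Gaussian and Poisson parts and handling them differently. For the Gaussian part Wiener does work: $\sigma_t^{jk}=\hat v_G(t)$ is real, so $v_G$ is symmetric, and $v_G*v_G(\{0\})=0$ (inherited from $(v_G+v_P)^{*2}(\{0\})=0$) forces $\sum_x v_G(\{x\})^2=0$; hence $\frac{1}{(2T)^l}\int(\sigma_t^{jk})^2\,dt\to0$ and Lemma~\ref{lemma-weakmixing} gives a density-one set on which $\sigma_{t_n}^{jk}\to0$. For the Poisson part Wiener is \emph{not} used. Instead one shows that the nonnegative function $t\mapsto\int(\cos x-1)(\cos y-1)Q_{0t}^{jk}(dx,dy)$ has vanishing Ces\`aro mean, forms the averaged L\'evy measures $R_T=\frac{1}{(2T)^l}\int_{(-T,T]^l}Q_{0t}^{jk}\,dt$, and uses Lemma~\ref{lemma-compact} to extract subsequential weak limits $R$ of $R_T|_{K_\delta^c}$. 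The vanishing cos--cos integral forces $R$ onto the grid $\{x\in2\pi\mathbb{Z}\}\cup\{y\in2\pi\mathbb{Z}\}$; under the (temporary) assumption that $Q_0$ has no atoms on that grid, $R$ sits on the coordinate axes, and together with~(\ref{eq1.4}) this yields $\int\min(|xy|,1)R_T\to0$. Lemma~\ref{lemma-weakmixing} then gives a density-one set along which $\int\min(|xy|,1)Q_{0t_n}^{jk}\to0$, and one concludes via Corollary~\ref{coroll} (the weak-mixing version of Corollary~\ref{corol}). The $Q_0$-atom assumption is removed at the end by the $M_a$ argument, exactly as you anticipated --- but note that this trick is applied \emph{after} the structural analysis of $R$, not as a substitute for controlling the atoms of $v$.
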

\section{Conclusion}
In this work we derived different results concerning ergodicity and mixing properties of multivariate stationary infinitely divisible random fields. A possible future direction consists of the investigation of statistical properties of the results presented in this paper. For example for multivariate stochastic processes, showing that mixed moving average (MMA) processes are mixing implies that the corresponding moment based estimator (like the generalised method of moments (GMM)) are consistent (see \cite{FuSt}). However, it is not clear that a similar result holds for the random fields case. Other possible directions would be to extend the present results to the case of random fields on manifolds or on infinite dimensional vector spaces. However, the literature is not as developed as for the $\mathbb{R}^{l}$-case and requires further work.
\section*{Appendix A: Proofs of Section 2}
\subsection*{Proof of Theorem \ref{THEOREM}}
This proof is an extension to the random field case of the proof of Theorem 2.1 of \cite{FuSt}.\\
``$\Rightarrow$": Assume $(X_{t})_{t\in\mathbb{R}^{l}}$ to be mixing. This implies
\begin{equation*}
\lim\limits_{n\rightarrow\infty}\mathbb{E}\left[\exp\left(i\langle\theta_{1},X_{0}\rangle+i\langle\theta_{2},X_{t_{n}}\rangle\right)\right]=\mathbb{E}\left[\exp\left(i\langle\theta_{1},X_{0}\rangle\right)\right]\mathbb{E}\left[\exp\left(i\langle\theta_{2},X_{0}\rangle\right)\right],
\end{equation*}
for any $\theta_{1},\theta_{2}\in\mathbb{R}^{d}$. Now, setting $(\theta_{1},\theta_{2})=(-e_{k},e_{j})$, $j,k=1,...,d$ with $e_{j}$ the unit $j$-th vector in $\mathbb{R}^{d}$, equation $(\ref{MAINeq})$ is satisfied.
\\``$\Leftarrow$": Assume that equation $(\ref{MAINeq})$ holds for every $j,k=1,..,d$, then we have
\begin{equation}\label{POSITIVE}
\lim\limits_{n\rightarrow\infty}\mathbb{E}\left[e^{i\left(X_{t_{n}}^{(j)}+X_{0}^{(k)}\right)}\right]=\mathbb{E}\left[e^{iX_{0}^{(j)}}\right]\mathbb{E}\left[e^{iX_{0}^{(k)}}\right],
\end{equation}
for every $j,k=1,..,d$. This is true because of the following reasoning. In particular, we extend the proof of Rosinski and Zak \cite{RoZa}, Theorem 1, to the multivariate random field case. Assume that equation $(\ref{MAINeq})$ holds. We will initially prove the following. For every $Y\in L^{2}(\Omega,\mathcal{F},\mathbb{P})$ (complex valued)
\begin{equation}\label{complex}
\lim\limits_{n\rightarrow\infty}\mathbb{E}\left[e^{iX_{t_{n}}^{(j)}}\bar{Y}\right]=\mathbb{E}\left[e^{iX_{0}^{(j)}}\right]\mathbb{E}\left[\bar{Y}\right],
\end{equation}
for $j=1,..,d$. It is possible to see that equation (\ref{complex}) holds for $Y\in H_{0}:=lin\{1,e^{iX_{t}^{(j)}}, t\in\mathbb{R}^{l}\}:=\{Z\in L^{2}(\Omega,\mathcal{F},\mathbb{P}):Z=a_{0}1+\sum_{i=1}^{n}a_{i}e^{iX_{t_{i}}^{(j)}},t_{i}\in\mathbb{R}^{l},n\in\mathbb{N}\}$. Consider now the $L^{2}$-closure of $H_{0}$ and call it $H$. Then by standard density argument equation $(\ref{complex})$ is true for any $Y\in H$. Now, consider any $Y\in L^{2}(\Omega,\mathcal{F},\mathbb{P})$ (complex valued). We can write $Y=Y_{1}+Y_{2}$, where $Y_{1}\in H$ and $Y_{2}\in H^{\bot}:=\{W\in L^{2}(\Omega,\mathcal{F},\mathbb{P}):\mathbb{E}[Z\bar{W}]=0 \enspace \forall Z\in H \}$, where $\mathbb{E}[\cdot\,\,\bar{\cdot}]$ denotes the inner product (usually written as $<\cdot,\cdot>$) from $L^{2}(\Omega,\mathcal{F},\mathbb{P})\times L^{2}(\Omega,\mathcal{F},\mathbb{P})$ to $\mathbb{R}$. Notice that we are using the conjugate (i.e.~in symbol `` $\bar{\enspace}$ ") because this is how the inner product space over a complex space is defined. Further, notice that we can write $Y=Y_{1}+Y_{2}$ since the $L^{2}$-space endowed with that inner product is a Hilbert space.\\
Since $\mathbb{E}[e^{iX_{t_{n}}}\bar{Y}_{2}]=0$ for every $t_{n}\in\mathbb{R}^{l}$ and $\mathbb{E}[\bar{Y}_{2}]=0$ by definition of $H^{\bot}$, we get that
\begin{equation*}
\lim\limits_{n\rightarrow\infty}\mathbb{E}\left[e^{iX_{t_{n}}}\bar{Y}\right]=\lim\limits_{n\rightarrow\infty}\mathbb{E}\left[e^{iX_{t_{n}}}\bar{Y}_{1}\right]=\mathbb{E}\left[e^{iX_{0}}\right]\mathbb{E}\left[\bar{Y}_{1}\right]=\mathbb{E}\left[e^{iX_{0}}\right]\mathbb{E}\left[\bar{Y}\right].
\end{equation*}
Hence we have equation (\ref{complex}). Putting now $Y=e^{-iX_{0}^{(k)}}$ in (\ref{complex}), for $k=1,..,d$, we obtain
\begin{equation}\label{positive}
\lim\limits_{n\rightarrow\infty}\mathbb{E}\left[e^{i(X_{t_{n}}^{(j)}+X_{0}^{(k)})}\right]=\mathbb{E}\left[e^{iX_{0}^{(j)}}\right]\mathbb{E}\left[e^{iX_{0}^{(k)}}\right],
\end{equation}
which is eq. ($\ref{POSITIVE}$). We now prove that equations $(\ref{MAINeq})$ and $(\ref{POSITIVE})$ imply the multidimensional Maruyama conditions:
\\
\\
(MM1) the covariance matrix function $\Sigma(t_{n})$ of the Gaussian part of $(X_{t_{n}})_{t_{n}\in\mathbb{R}^{l}}$ tends to 0, as $n\rightarrow\infty$, where $(t_{n})_{n\in\mathbb{N}}$ is any sequence in $\mathcal{T}$.
\\
\\
(MM2) $\lim\limits_{n\rightarrow\infty}Q_{0t_{n}}(\|x\|\cdot\|y\|>\delta)=0$ and $\lim\limits_{n\rightarrow\infty}\int_{0<\|x\|^{2}+\|y\|^{2}\leq 1}\|x\|\cdot\|y\|Q_{0t_{n}}(dx,dy)=0$ for any $\delta>0$, where $Q_{0t_{n}}$ is the L\'{e}vy measure of $\mathcal{L}(X_{0},X_{t_{n}})$ on $(\mathbb{R}^{d},\mathcal{B}(\mathbb{R}^{d}))$.
\\
\\
Actually, we will not prove (MM2) but we will prove instead the following condition:
\\
\\(MM2') $\lim\limits_{n\rightarrow\infty}Q_{0t_{n}}(\|x\|\cdot\|y\|>\delta)=0$ for any $\delta>0$, where $Q_{0t_{n}}$ is the L\'{e}vy measure of $\mathcal{L}(X_{0},X_{t_{n}})$ on $(\mathbb{R}^{d},\mathcal{B}(\mathbb{R}^{d}))$.
\\
\\
This is because in Lemma $\ref{LEMMA}$ we will prove that (MM2') implies (MM2).\\
Regarding (MM1), we have the following. Since $(X_{0},X_{t_{n}})$ has a $2d$-dimensional ID distribution, its characteristic function can be written, using the L\'{e}vy-Khintchine formulation for every $\theta=(\theta_{1},\theta_{2})'\in\mathbb{R}^{d}\times\mathbb{R}^{d}$, as (see Theorem 8.1 of \cite{Sato} and the proof of Theorem 2.1 of \cite{FuSt})
\begin{equation*}
\mathbb{E}\left[e^{i\langle\theta_{1},X_{0}\rangle+i\langle\theta_{2},X_{t_{n}}\rangle}\right]=\exp\bigg\{i\left\langle\binom{\theta_{1}}{\theta_{2}},\binom{\alpha^{1}}{\alpha^{2}}\right\rangle-\frac{1}{2}\left\langle\binom{\theta_{1}}{\theta_{2}},\Sigma(t_{n})\binom{\theta_{1}}{\theta_{2}}\right\rangle
\end{equation*}
\begin{equation}\label{KHINTCHINE}
+\int_{\mathbb{R}^{2d}}\left(e^{i<\theta_{1},x>+i<\theta_{2},y>}-1-i<\theta_{1},x>\mathbf{1}_{[0,1]}(\|x\|)-i<\theta_{2},y>\mathbf{1}_{[0,1]}(\|y\|)\right)Q_{0t_{n}}(dx,dy)\bigg\}.
\end{equation}
By substituting $(-e_{k},e_{j})$, $(0,e_{j})$ and $(-e_{k},0)$, $j,k=1,...,d$ for $(\theta_{1},\theta_{2})$ in $(\ref{KHINTCHINE})$ we get the description of equation $(\ref{MAINeq})$ in terms of the covariance matrix function of the Gaussian part of $(X^{(j)}_{t_{n}},X^{(k)}_{0})$ and the L\'{e}vy measure $Q_{0t_{n}}$, namely 
\begin{equation*}
\lim\limits_{n\rightarrow\infty}\mathbb{E}\left[e^{i\left(X_{t_{n}}^{(j)}-X_{0}^{(k)}\right)}\right]\left(\mathbb{E}\left[e^{iX_{0}^{(j)}}\right]\mathbb{E}\left[e^{-iX_{0}^{(k)}}\right]\right)^{-1}
\end{equation*}
\begin{equation*}
=\lim\limits_{n\rightarrow\infty}\exp\bigg\{\sigma_{jk}(t_{n})+\int_{\mathbb{R}^{2d}}\left(e^{i\left(y^{(j)}-x^{(k)}\right)}-e^{iy^{(j)}}-e^{-ix^{(k)}}+1\right)Q_{0t_{n}}(dx,dy)\bigg\}=1
\end{equation*}
for arbitrary $j,k=1,...,d$, where $\sigma_{jk}(t_{n})$ is the $(k,j)$-th element of $\Sigma(t_{n})$. By using the identity Real$(e^{i(-x+y)}-e^{-ix}-e^{iy}+1)=(\cos x-1)(\cos y-1)+\sin x\sin y$ and taking the logarithm on both sides, we get
\begin{equation}\label{COS1}
\lim\limits_{n\rightarrow\infty}\bigg[\sigma_{jk}(t_{n})+\int_{\mathbb{R}^{2d}}\left((\cos x^{(k)}-1)(\cos y^{(j)}-1)+\sin x^{(k)}\sin y^{(j)}\right)Q_{0t_{n}}(dx,dy)\bigg]=0,
\end{equation}
for any $j,k=1,..,d.$ Applying the same argument to $\mathbb{E}\left[e^{i\left(X_{t_{n}}^{(j)}+X_{0}^{(k)}\right)}\right]$ we obtain
\begin{equation}\label{COS2}
\lim\limits_{n\rightarrow\infty}\bigg[-\sigma_{jk}(t_{n})+\int_{\mathbb{R}^{2d}}\left((\cos x^{(k)}-1)(\cos y^{(j)}-1)-\sin x^{(k)}\sin y^{(j)}\right)Q_{0t_{n}}(dx,dy)\bigg]=0,
\end{equation}
for any $j,k=1,..,d$. Putting together equations $(\ref{COS1})$ and $(\ref{COS2})$, due to the consistency of the L\'{e}vy measures (see Proposition 11.10 of \cite{Sato} and the proof of Theorem 2.1 of \cite{FuSt}),
\begin{equation}\label{COS3}
\lim\limits_{n\rightarrow\infty}\int_{\mathbb{R}^{2d}}\left(\cos x^{(k)}-1)(\cos y^{(j)}-1\right)Q_{0t_{n}}(dx,dy)=\lim\limits_{n\rightarrow\infty}\int_{\mathbb{R}^{2}}(\cos x-1)(\cos y-1)Q_{0t_{n}}^{(jk)}(dx,dy)=0,
\end{equation}
for any $j,k=1,..,d$, where $Q_{0t_{n}}^{(jk)}$ denotes the L\'{e}vy measure of $\mathcal{L}(X_{0}^{(k)},X_{t_{n}}^{(j)})$ on $(\mathbb{R}^{2},\mathcal{B}(\mathbb{R}^{2}))$.
\\ Now, for fixed $j,k\in\{1,...,d\}$ the family of measures $\{\mathcal{L}(X_{0}^{(k)},X_{t_{n}}^{(j)})\}_{n\in\mathbb{N}}$ is tight. To see this, note that by letting $K_{r}=\{(x,y)\in\mathbb{R}^{2}:x^{2}+y^{2}\leq r^{2}\}$ and using the stationarity of $(X_{t})_{t\in\mathbb{R}^{l}}$ we have
\begin{equation*}
\mathbb{P}((X_{0},X_{t_{n}})\notin K_{r})\leq\mathbb{P}(|X_{0}|>2^{-\frac{1}{2}}r)+\mathbb{P}(|X_{t_{n}}|>2^{-\frac{1}{2}}r)=2\mathbb{P}(|X_{0}|>2^{-\frac{1}{2}}r),
\end{equation*}
hence $\lim\limits_{r\rightarrow\infty}\sup\limits_{t_{n}\in\mathbb{R}^{l}}\mathbb{P}((X_{0},X_{t_{n}})\notin K_{r})=0$. Since $\{\mathcal{L}(X_{0},X_{t_{n}})\}_{n\in\mathbb{N}}$ is tight, then $\{\mathcal{L}(X_{0}^{(k)},X_{t_{n}}^{(j)})\}_{n\in\mathbb{N}}$ is tight.
Notice that we are proving more than necessary because it is sufficient to prove the above limit for $\sup\limits_{n\in\mathbb{N}}$. Thus, by Prokhorov's theorem we have that the family $\{\mathcal{L}(X_{0}^{(k)},X_{t_{n}}^{(j)})\}_{n\in\mathbb{N}}$ is sequentially compact (in the topology of weak convergence). Choose any sequence $(\tau_{n})_{n\in\mathbb{N}}\in\mathcal{T}$ and let $F_{jk}$ be a cluster (or accumulation) point of the family $\{\mathcal{L}(X_{0}^{(k)},X_{\tau_{n}}^{(j)})\}_{n\in\mathbb{N}}$. Then, using Lemma 7.8 of page 34 of Sato's book \cite{Sato}, we have that $F_{jk}$ is an ID distribution on $\mathbb{R}^{2}$ with some L\'{e}vy measure $Q_{jk}$. Moreover, let $(t_{m})_{m\in\mathbb{N}}$ be a subsequence of $(\tau_{n})_{n\in\mathbb{N}}$ such that
\begin{equation}\label{WEAK-CONVERGENCE}
\mathcal{L}(X_{0}^{(k)},X_{t_{m}}^{(j)})\rightharpoonup F_{jk}, \enspace \enspace as \enspace m\rightarrow\infty.
\end{equation}
Notice that $(t_{m})_{m\in\mathbb{N}}\in\mathcal{T}$ as well. We know that $F_{jk}$ exists by Prokhorov theorem on Euclidean spaces. Then, for every $\delta>0$ with $Q_{jk}(\partial K_{\delta})=0$,
\begin{equation}\label{TRUNCATION}
Q_{0t_{m}}^{(jk)}\Big|_{K_{\delta}^{c}}\rightharpoonup Q_{jk}\Big|_{K_{\delta}^{c}}, \enspace \enspace as \enspace m\rightarrow\infty.
\end{equation}
Since $(\cos x-1)(\cos y-1)\geq0$ and using equations $(\ref{COS3})$ and $(\ref{TRUNCATION})$, we deduce that
\begin{equation*}
0\leq \int_{K^{c}_{\delta}}(\cos x-1)(\cos y-1)Q_{jk}(dx,dy)=\lim\limits_{n\rightarrow\infty}\int_{K^{c}_{\delta}}(\cos x-1)(\cos y-1)Q_{0t_{m}}^{(jk)}(dx,dy)
\end{equation*}
\begin{equation*}
\leq \lim\limits_{n\rightarrow\infty}\int_{\mathbb{R}^{2}}(\cos x-1)(\cos y-1)Q_{0t_{n}}^{(jk)}(dx,dy)=0.
\end{equation*}
Every L\'{e}vy measure $Q_{jk}$ is concentrated on the set of straight lines $\{(x,y)\in\mathbb{R}^{2}:x\in2\pi\mathbb{Z}\enspace or \enspace y\in2\pi\mathbb{Z}\}$. This is because only on these lines the integrand $(\cos x-1)(\cos y-1)$ is zero, otherwise it is positive, and because $\delta$ can be taken arbitrarily small. 
\\ By the stationarity of the process and $(\ref{WEAK-CONVERGENCE})$, the projection of $Q_{jk}$ onto the first and second axis coincides with $Q_{0}^{(k)}$ and $Q_{0}^{(j)}$, respectively, on the complement of every neighbourhood of zero (because $(\ref{TRUNCATION})$ holds on any complement of every neighbourhood of zero). Recall the assumption on $Q_{0}$, i.e. $Q_{0}(\{x=(x_{1},...,x_{d})'\in\mathbb{R}^{d}:\exists j\in\{1,...,d\},x_{j}\in2\pi\mathbb{Z}\})=0$. Hence, we have for every $a\in\mathbb{Z}$, $a\neq0$,
\begin{equation*}
Q_{jk}(\{2\pi a\}\times\mathbb{R})=Q^{(k)}_{0}(\{2\pi a\})=Q_{0}(\mathbb{R}\times...\times\mathbb{R}\times\{2\pi a\}\times\mathbb{R}\times...\times\mathbb{R})
\end{equation*}
\begin{equation*}
\leq Q_{0}(\{x\in\mathbb{R}^{d}:\exists l\in\{1,...,d\},x_{l}\in2\pi\mathbb{Z}\})=0
\end{equation*}
and similarly $Q_{jk}(\mathbb{R}\times \{2\pi a\})=0.$ Therefore, $Q_{jk}$, $j,k=1,...,d$ is concentrated on the axes of $\mathbb{R}^{2}$ and on each of them coincides with $Q_{0}^{(k)}$ and $Q_{0}^{(j)}$. It is important to stress the main ideas of the above argument. First, we showed that $Q_{jk}$ is concentrated only on $\{(x,y):x\in2\pi\mathbb{Z}\enspace \textnormal{or}\enspace y\in2\pi\mathbb{Z}\}$ and then using the assumption of our theorem we showed that only when $k=0$ the measure $Q_{jk}$ is non-zero. Further, the stationarity of the process allows the fact that when we project $Q_{jk}$ on the axes the projections coincide with $Q_{0}^{(k)}$ and $Q_{0}^{(j)}$, avoiding the case of having $Q_{0}^{(k)}$ on one axis and $Q_{t_{m}}^{(j)}$ on the other.\\ Now observe that, for every $t\in\mathbb{R}^{l}$, using the consistency of the L\'{e}vy measure
\begin{equation*}
\int_{K_{\delta}}|xy|Q_{0t}^{(jk)}(dx,dy)\leq \frac{1}{2}\int_{K_{\delta}}(x^{2}+y^{2})Q_{0t}^{(jk)}(dx,dy)\leq\frac{1}{2}\int_{|x|\leq\delta}x^{2}Q_{0t}^{(jk)}(dx,dy)+\frac{1}{2}\int_{|y|\leq\delta}y^{2}Q_{0t}^{(jk)}(dx,dy)
\end{equation*}
\begin{equation}\label{Epsilon}
=\frac{1}{2}\int_{|x|\leq\delta}x^{2}Q_{0}^{(k)}(dx)+\frac{1}{2}\int_{|y|\leq\delta}y^{2}Q_{0}^{(j)}(dy)=\int_{|x|\leq\delta}x^{2}Q_{0}(dx)<\epsilon,
\end{equation}
for any positive $\epsilon$ and any $j,k=1,...,d$, if only $\delta$ is small enough. This implies that, by $(\ref{Epsilon})$ for every $j,k=1,...,d$ we have for all $m\in\mathbb{N}$
\begin{equation*}
\int_{K_{\delta}}|\sin x \sin y|Q_{0t_{m}}^{(jk)}(dx,dy)\leq\int_{K_{\delta}}|xy|Q_{0t_{m}}^{(jk)}(dx,dy)<\epsilon,
\end{equation*}
for sufficiently small $\delta>0$. Since $Q_{jk}$ is concentrated on the axes of $\mathbb{R}^{2}$ then $\int_{K_{\delta}^{c}}\sin x \sin yQ_{jk}(dx,dy)=0$ and using $(\ref{TRUNCATION})$ we get
$\lim\limits_{m\rightarrow\infty}\int_{K_{\delta}^{c}}\sin x \sin yQ_{0t_{m}}^{(jk)}(dx,dy)=0$. Thus,
\begin{equation}\label{Bo}
\lim\limits_{m\rightarrow\infty}\int_{\mathbb{R}^{2}}\sin x \sin yQ_{0t_{m}}^{(jk)}(dx,dy)=0,
\end{equation}
for every $j,k=1,...,d$. Combining equations $(\ref{COS1})$, $(\ref{COS3})$ and $(\ref{Bo})$ we obtain that $\sigma_{jk}(t_{m})\rightarrow0$ as $m\rightarrow\infty$ for all $j,k=1,...,d$. Since $(t_{m})_{m\in\mathbb{N}}$ is a subsequence of an arbitrary sequence $\tau_{n}\in\mathcal{T}$, it follows that $\sigma_{jk}(t_{n})\rightarrow0$ as $n\rightarrow\infty$ and thus $\Sigma(t_{n})\rightarrow0$ as $n\rightarrow\infty$, for any $(t_{n})_{n\in\mathbb{N}}\in\mathcal{T}$. This is because we have used the fact that if a sequence has the property that any subsequence has a further subsequence that converges to the same limit, then the sequence converges to that limit. Hence, condition (MM1) follows. 
\\
\\
To prove (MM2'), observe that, for any $m\in\mathbb{N}$,
\begin{equation*}
Q_{0t_{m}}\left(\|x\|^{2}\cdot\|y\|^{2}>\delta^{2}\right)=Q_{0t_{m}}\left(\sum_{j,k=1}^{d}\left(x^{(k)}y^{(j)}\right)^{2}>\delta^{2}\right)\leq\sum_{j,k=1}^{d}Q_{0t_{m}}^{(jk)}\left(\Big|x^{(k)}y^{(j)}\Big|\geq\frac{\delta}{d}\right).
\end{equation*}
In view of $(\ref{TRUNCATION})$ we also get
\begin{equation*}
\limsup\limits_{m\rightarrow\infty}Q_{0t_{m}}^{(jk)}\left(\Big|x^{(k)}y^{(j)}\Big|\geq\frac{\delta}{d}\right)\leq Q_{jk}\left(\Big|x^{(k)}y^{(j)}\Big|\geq\frac{\delta}{d}\right)=0,
\end{equation*}
for any $\delta>0$ and $j,k=1,...,d$. Hence, $\lim\limits_{m\rightarrow\infty}Q_{0t_{m}}(\|x\|\cdot\|y\|>\delta)=0$ for any $\delta>0$ and together with the fact that $(t_{k})_{k\in\mathbb{N}}$ is a subsequence of an arbitrary sequence $\tau_{n}\in\mathcal{T}$ we obtain condition (MM2').
\\
Now, by Theorem $\ref{LAST-THEOREM}$ the proof is complete.
\subsection*{Proof of Theorem \ref{LAST-THEOREM}}
This proof is an extension to the random field case of the proof of Theorem 2.3 of \cite{FuSt}.\\
``$\Rightarrow$": We have shown in the proof of Theorem $\ref{THEOREM}$ that mixing implies conditions (MM1) and (MM2'). In particular, mixing implies formula $(\ref{MAINeq})$, which implies (MM1) and (MM2').
\\ ``$\Leftarrow$": For the other direction we have the following. Assume that (MM1) and (MM2') hold. Then by Lemma $\ref{LEMMA}$ condition (MM2) holds. We need to prove that the process is mixing, i.e.~for all $m\in\mathbb{N},\lambda=(s_{1},...,s_{m})',\mu=(p_{1},...,p_{m})'\in\mathbb{R}^{ml}$ and $\theta_{1},\theta_{2}\in\mathbb{R}^{md}$
\begin{equation}\label{Mix}
\lim\limits_{n\rightarrow\infty}\mathbb{E}\left[\exp\left(i\langle\theta_{1},X_{\lambda}\rangle+i\langle\theta_{2},X_{\tilde{\mu}}\rangle\right)\right]=\mathbb{E}\left[\exp\left(i\langle\theta_{1},X_{\lambda}\rangle\right)\right]\mathbb{E}\left[\exp\left(i\langle\theta_{2},X_{\mu}\rangle\right)\right],
\end{equation}
where $X_{\lambda}:=(X_{s_{1}}',...,X_{s_{m}}')'\in\mathbb{R}^{md}$ and
$\tilde{\mu}=(p_{1}+t_{n},...,p_{m}+t_{n})'$, where $(t_{n})_{n\in\mathbb{N}}$ is any sequence in $\mathcal{T}$.
\\ The family of $\mathbb{R}^{2md}$-valued distributions of the ID random fields $(X_{\lambda},X_{\tilde{\mu}})$, denoted $\{\mathcal{L}(X_{\lambda},X_{\tilde{\mu}})\}_{n\in\mathbb{N}}$ is tight. Indeed, let $K$ be the $2md$-dimensional ball around the origin with radius of length $\sqrt{2m}a$, then by stationarity of the process $(X_{t})_{t\in\mathbb{R}^{l}}$ we have
\begin{equation*}
\mathbb{P}((X_{\lambda},X_{\tilde{\mu}})\notin K)\leq\sum_{j=1}^{m}\mathbb{P}(\|X_{s_{j}}\|\geq a)+\sum_{j=1}^{m}\mathbb{P}(\|X_{p_{j}+t_{n}}\|\geq a)=2m\sum_{j=1}^{m}\mathbb{P}(\|X_{0}\|\geq a),
\end{equation*}
hence $\lim\limits_{a\rightarrow\infty}\sup\limits_{\lambda,\mu,t_{n}}\mathbb{P}((X_{\lambda},X_{\tilde{\mu}})\notin K)=0$. Again notice that we are proving more than necessary because it is sufficient to prove the above limit for $\sup\limits_{\lambda,\mu,n}$.
Let $(\alpha^{1},\Sigma^{1},Q^{1})$ and $(\alpha^{2},\Sigma^{2},Q^{2})$ be the characteristic triplets of $\mathcal{L}(X_{\lambda})$ and $\mathcal{L}(X_{\mu})$ respectively.
\\
Suppose $F$ with characteristic triplet $(\alpha,R,Q)$ is a cluster point of the distributions of $(X_{\lambda},X_{\tilde{\mu}})$. As in the proof of the previous theorem, we have that $F$ is the limit as $r\rightarrow\infty$ of the distributions of $(X_{\lambda},X_{\tilde{\mu}_{r}})$, where $\tilde{\mu}_{r}=(p_{1}+t_{r},...,p_{m}+t_{r})'$ with $t_{r}$ is a subsequence of $t_{n}$. Let $(\alpha_{r},\Sigma_{r},Q_{r})$ be the characteristic triplets of $(X_{\lambda},X_{\tilde{\mu}_{r}})$. By Lemma 7.8 of \cite{Sato} $F$ is an ID distribution on $\mathbb{R}^{2md}$. We denote by $\Phi_{r}(\theta_{1},\theta_{2})$ the characteristic function of $\mathcal{L}(X_{\lambda},X_{\tilde{\mu}_{r}})$ at the point $(\theta_{1},\theta_{2})\in\mathbb{R}^{md}\times\mathbb{R}^{md}$. The logarithm of $\Phi_{r}(\theta_{1},\theta_{2})$ can be written (see proof of Theorem $\ref{THEOREM}$ and Theorem 2.3.of \cite{FuSt}) as
\begin{equation*}
\log\Phi_{r}(\theta_{1},\theta_{2})=i\left\langle \binom{\theta_{1}}{\theta_{2}},\binom{\alpha^{1}_{r}}{\alpha^{2}_{r}} \right\rangle-\frac{1}{2}\left\langle\binom{\theta_{1}}{\theta_{2}},\Sigma_{r}\binom{\theta_{1}}{\theta_{2}}\right\rangle
\end{equation*}
\begin{equation*}
+\int_{\{\|x\|<\delta,\|y\|<\delta\}}\left(e^{i\langle\theta_{1},x\rangle+i\langle\theta_{2},y\rangle}-1-i\langle\theta_{1},x\rangle\mathbf{1}_{[0,1]}(\|x\|)-i\langle\theta_{2},y\rangle\mathbf{1}_{[0,1]}(\|y\|)\right)Q_{r}(dx,dy)
\end{equation*}
\begin{equation*}
+\int_{\{\|x\|\geq\delta \,\,\, \textnormal{or} \,\,\,\|y\|\geq\delta\}}\left(e^{i\langle\theta_{1},x\rangle+i\langle\theta_{2},y\rangle}-1-i\langle\theta_{1},x\rangle\mathbf{1}_{[0,1]}(\|x\|)-i\langle\theta_{2},y\rangle\mathbf{1}_{[0,1]}(\|y\|)\right)Q_{r}(dx,dy)
\end{equation*}
\begin{equation*}
:=I_{1}+I_{2}+I_{3}+I_{4}.
\end{equation*}
We need to prove that $\log\Phi_{r}(\theta_{1},\theta_{2})\rightarrow\log\Phi_{1}(\theta_{1})+\log\Phi_{2}(\theta_{2})$ as $r\rightarrow\infty$ for all $\theta_{1},\theta_{2}\in\mathbb{R}^{md}$ where $\Phi_{1}$ and $\Phi_{2}$ are the characteristic functions of $X_{\lambda}$ and $X_{\mu}$, respectively.
\\
It is possible to see immediately that $I_{1}=i\langle\alpha^{1},\theta_{1}\rangle+i\langle\alpha^{2},\theta_{2}\rangle$, just by setting $\alpha_{r}=(\alpha^{1}_{r},\alpha^{2}_{r})'=(\alpha^{1},\alpha^{2})'$. Further, by condition (MM1) $I_{2}$ converges to $-\frac{1}{2}\langle\Sigma^{1}\theta_{1},\theta_{1}\rangle-\frac{1}{2}\langle\Sigma^{2}\theta^{2},\theta^{2}\rangle$ as $r\rightarrow\infty$, where $\Sigma^{1}$ and $\Sigma^{2}$ are the $md$-dimensional covariance matrix of $(X_{\lambda})$ and $(X_{\mu})$ respectively.
\\ For $I_{4}$, we have
\begin{equation*}
\lim\limits_{r\rightarrow\infty}I_{4}=\int_{\{\|x\|\geq\delta \,\,\, or \,\,\,\|y\|\geq\delta\}}\left(e^{i\langle\theta_{1},x\rangle+i\langle\theta_{2},y\rangle}-1-i\langle\theta_{1},x\rangle\mathbf{1}_{[0,1]}(\|x\|)-i\langle\theta_{2},y\rangle\mathbf{1}_{[0,1]}(\|y\|)\right)Q(dx,dy)
\end{equation*}
\begin{equation*}
=\int_{\{\|x\|\geq\delta\}}\left(e^{i\langle\theta_{1},x\rangle}-1-i\langle\theta_{1},x\rangle\mathbf{1}_{[0,1]}(\|x\|)\right)Q^{1}(dx)
\end{equation*}
\begin{equation*}
+\int_{\{\|y\|\geq\delta\}}\left(e^{i<\theta_{2},y>}-1-i<\theta_{2},y>\mathbf{1}_{[0,1]}(\|y\|)\right)Q^{2}(dy).
\end{equation*}
This is because of the identity 
\begin{equation*}
\exp\left(\sum_{i=1}^{m}a_{i}+\sum_{i=1}^{m}b_{i}\right)-1=\exp\left(\sum_{i=1}^{m}a_{i}\right)-1+\exp\left(\sum_{i=1}^{m}b_{i}\right)-1,
\end{equation*}
which is valid when $a_{i}b_{j}=0$ for any $1\leq i,j\leq m$, and because, letting $x=\left(x^{(1)'},...,x^{(m)'}\right)\in(\mathbb{R}^{d})^{m}$ and  $y=\left(y^{(1)'},...,y^{(m)'}\right)\in(\mathbb{R}^{d})^{m}$, we have
\begin{equation*}
Q(\|x\|\cdot\|y\|>\delta)\leq \liminf\limits_{r\rightarrow\infty}Q_{k}(\|x\|\cdot\|y\|>\delta)\leq \liminf\limits_{r\rightarrow\infty}\sum_{j,i=1}^{m}Q_{0,t_{r}+p_{i}-s_{j}}\left(|\|x^{(j)}\|\cdot\|y^{(i)}\|>\frac{\delta}{m}\right)\stackrel{(M2)}{=} 0,
\end{equation*} 
for any $\delta>0$, which shows in particular that $Q(\|x\|\cdot\|y\|>0)=0$.
\\
Analogously to $x$ and $y$ we denote by $\theta_{1}^{(j)}$ and $\theta_{2}^{(j)}$ the $j$-th $\mathbb{R}^{d}$-component of $\theta_{1}$ and $\theta_{2}$, respectively. Concerning $I_{3}$, consider the multivariate Taylor expansion of
\begin{equation*}
e^{i\langle\theta_{1},x\rangle+i\langle\theta_{2},y\rangle}-1-i\langle\theta_{1},x\rangle\mathbf{1}_{[0,1]}(\|x\|)-i\langle\theta_{2},y\rangle\mathbf{1}_{[0,1]}(\|y\|)=:g(x,y)
\end{equation*}
with respect to the variable $(x,y)'$ at the point $(x_{0},y_{0})'\equiv0$. For any $\delta>0$ small enough, we obtain
\begin{equation*}
I_{3}=-\frac{1}{2}\Bigg[\int_{\{\|x\|<\delta,\|y\|<\delta\}}\left(\sum_{j=1}^{m}\left\langle\theta_{1}^{(j)},x^{(j)}\right\rangle\right)^{2}+\left(\sum_{j=1}^{m}\left\langle\theta_{2}^{(j)},y^{(j)}\right\rangle\right)^{2}Q_{r}(dx,dy)
\end{equation*}
\begin{equation*}
+2\int_{\{\|x\|<\delta,\|y\|<\delta\}}\left(\sum_{j,k=1}^{m}\left\langle\theta_{1}^{(j)},x^{(j)}\right\rangle\left\langle\theta_{2}^{(k)},y^{(k)}\right\rangle\right)Q_{r}(dx,dy)\Bigg]+R,
\end{equation*}
where $R$ is the reminder and in the integral form it is given by
\begin{equation*}
R=\int_{\{\|x\|<\delta,\|y\|<\delta\}}\frac{1}{6}\int_{0}^{1}(1-t)^{2}D_{x,y}^{3}(g(tx,ty))dtQ_{k}(du),
\end{equation*}
and so
\begin{equation*}
6|R|\leq \int_{\{\|x\|<\delta,\|y\|<\delta\}}\int_{0}^{1}|\langle\theta_{1},tx\rangle+\langle\theta_{2},ty\rangle|^{3}\Big|e^{i\langle\theta_{1},tx\rangle+i\langle\theta_{2},ty\rangle}\Big|dtQ_{r}(dx,dy)
\end{equation*}
\begin{equation*}
= \int_{\{\|x\|<\delta,\|y\|<\delta\}}|\langle\theta_{1},x\rangle+\langle\theta_{2},y\rangle|^{3}Q_{r}(dx,dy)\int_{0}^{1}t^{3}dt=\frac{1}{4}\int_{\{\|x\|<\delta,\|y\|<\delta\}}|\langle\theta_{1},x\rangle+\langle\theta_{2},y\rangle|^{3}Q_{r}(dx,dy)
\end{equation*}
\begin{equation*}
\leq \frac{1}{4}\bigg\|\binom{\theta_{1}}{\theta_{2}}\bigg\|^{3}\int_{\{\|x\|<\delta,\|y\|<\delta\}}\bigg\|\binom{x}{y}\bigg\|^{3}Q_{r}(dx,dy)
\end{equation*}
\begin{equation*}
\leq \frac{1}{4}\bigg\|\binom{\theta_{1}}{\theta_{2}}\bigg\|^{3}2\delta\left(\int_{\{0<\|x\|<\delta\}}\|x\|^{2}Q^{1}(dx)+\int_{\{0<\|y\|<\delta\}}\|y\|^{2}Q^{2}(dx)\right)
\end{equation*}
and thus $6|R|<\epsilon$ for any positive $\epsilon$ if only $\delta$ is sufficiently small. Notice that our estimates are sharper than the ones of \cite{FuSt} because we work with the explicit integral form of the remainder. Moreover, we obtain for every $j,k=1,...,m$ and any $\delta$ small enough that
\begin{equation*}
\int_{\{\|x\|<\delta,\|y\|<\delta\}}\Big|\left\langle\theta_{1}^{(j)},x^{(j)}\right\rangle\left\langle\theta_{2}^{(k)},y^{(k)}\right\rangle\Big|Q_{r}(dx,dy)
\end{equation*}
\begin{equation*}
\leq \Big\|\theta_{1}^{(j)}\Big\|\cdot\Big\|\theta_{2}^{(k)}\Big\|\int_{\{\|x\|<\delta,\|y\|<\delta\}}\Big\|x^{(j)}\Big\|\cdot\Big\|y^{(k)}\Big\|Q_{r}(dx,dy)
\end{equation*}
\begin{equation*}
\leq \Big\|\theta_{1}^{(j)}\Big\|\cdot\Big\|\theta_{2}^{(k)}\Big\|\int_{\{0<\|x^{(j)}\|^{2}+\|y^{(k)}\|^{2}\leq 2\delta^{2}\}}\Big\|x^{(j)}\Big\|\cdot\Big\|y^{(k)}\Big\|Q_{0,t_{r}+p_{k}-s_{j}}\left(dx^{(j)},dy^{(k)}\right)
\end{equation*}
\begin{equation*}
\leq \Big\|\theta_{1}^{(j)}\Big\|\cdot\Big\|\theta_{2}^{(k)}\Big\|\int_{\{0<\|x^{(j)}\|^{2}+\|y^{(k)}\|^{2}\leq 1\}}\Big\|x^{(j)}\Big\|\cdot\Big\|y^{(k)}\Big\|Q_{0,t_{r}+p_{k}-s_{j}}\left(dx^{(j)},dy^{(k)}\right)\stackrel{r\rightarrow\infty}{\rightarrow}0,
\end{equation*}
by using condition (MM2). Finally, we have
\begin{equation*}
\Bigg|\frac{1}{2}\int_{\{\|x\|<\delta,\|y\|<\delta\}}\langle\theta_{1},x\rangle^{2}Q_{r}(dx,dy)+\int_{\{0<\|x\|<\delta\}}e^{i\langle\theta_{1},x\rangle}-1-i\langle\theta_{1},x\rangle\mathbf{1}_{[0,1]}(\|x\|)Q^{1}(dx)\Bigg|\leq J_{1}+J_{2}.
\end{equation*}
For $J_{1}$, we have
\begin{equation*}
J_{1}=\Bigg|\frac{1}{2}\int_{\{0<\|x\|<\delta,\|y\|<\delta\}}\langle\theta_{1},x\rangle^{2}Q_{r}(dx,dy)-\frac{1}{2}\int_{\{0<\|x\|<\delta\}}\langle\theta_{1},x\rangle^{2}Q_{r}(dx,dy)\Bigg|
\end{equation*}
\begin{equation*}
\leq \int_{\{0<\|x\|<\delta\}}\langle\theta_{1},x\rangle^{2}Q_{r}(dx,dy)\leq\|\theta_{1}\|^{2}\int_{\{0<\|x\|<\delta\}}\|x\|^{2}Q^{1}(dx).
\end{equation*}
For $J_{2}$, we have
\begin{equation*}
J_{2}=\Bigg|\int_{\{0<\|x\|<\delta\}}\frac{1}{2}\langle\theta_{1},x\rangle^{2}+e^{i\langle\theta_{1},x\rangle}-1-i\langle\theta_{1},x\rangle\mathbf{1}_{[0,1]}(\|x\|)Q^{1}(dx)\Bigg|,
\end{equation*}
and, by using the multivariate Taylor expansion and noticing that in this case the expansion is only for the variable $x$, we obtain
\begin{equation*}
J_{2}\leq \|\theta_{1}\|^{3}\delta\int_{\{0<\|x\|<\delta\}}\|x\|^{2}Q^{1}(dx).
\end{equation*}
Similar arguments apply to the second addend of the first term of $I_{3}$.
\\Combining all the different results, we get
\begin{equation*}
\lim\limits_{r\rightarrow\infty}\log\Phi_{r}(\theta_{1},\theta_{2})=\log\Phi_{1}(\theta_{1})+\log\Phi_{2}(\theta_{2}), \enspace \enspace \forall \theta_{1},\theta_{2}\in\mathbb{R}^{md},
\end{equation*}
and consequently we obtain the desired result in $(\ref{Mix})$, which concludes the proof.
\subsection*{Proof of Lemma \ref{LEMMA}}
Fix $\epsilon>0$, put $B_{\delta}=\{(x,y)\in\mathbb{R}^{d}\times\mathbb{R}^{d}:\|x\|^{2}+\|y\|^{2}\leq\delta^{2}\}$ and $R_{\delta}=\{(x,y)\in\mathbb{R}^{d}\times\mathbb{R}^{d}:\delta^{2}<\|x\|^{2}+\|y\|^{2}\leq1\}$. Then, we get
\begin{equation*}
\int_{\{0<\|x\|^{2}+\|y\|^{2}\leq 1\}}\|x\|\cdot\|y\|Q_{0t_{n}}(dx,dy)=\int_{B_{\delta}}\|x\|\cdot\|y\|Q_{0t_{n}}(dx,dy)+\int_{R_{\delta}}\|x\|\cdot\|y\|Q_{0t_{n}}(dx,dy):=P_{1}+P_{2}.
\end{equation*}
We will estimate the terms $P_{1}$ and $P_{2}$ separately. Using the stationarity of $Q_{0t_{n}}$ (due to the stationarity of $(X_{t_{n}})_{t_{n}\in\mathbb{R}^{l}}$) and the consistency of the L\'{e}vy measure, we get
\begin{equation*}
|P_{1}|\leq\dfrac{1}{2}\int_{B_{\delta}}\|x\|^{2}Q_{0t_{n}}(dx,dy)+\dfrac{1}{2}\int_{B_{\delta}}\|y\|^{2}Q_{0t_{n}}(dx,dy)
\end{equation*}
\begin{equation*}
\leq \dfrac{1}{2}\int_{\{\|x\|^{2}\leq\delta^{2},y\in\mathbb{R}^{d}\}}\|x\|^{2}Q_{0t_{n}}(dx,dy)+\dfrac{1}{2}\int_{\{\|y\|^{2}\leq\delta^{2},x\in\mathbb{R}^{d}\}}\|y\|^{2}Q_{0t_{n}}(dx,dy)=\int_{\|x\|\leq\delta}\|x\|^{2}Q_{0}(dx).
\end{equation*}
Here $Q_{0}$ is the L\'{e}vy measure of $X_{0}$. Thus, for some appropriately small $\delta_{0}$ we have
\begin{equation}\label{LEMMAEq2}
|P_{1}|=\int_{B_{\delta_{0}}}\|x\|\cdot\|y\|Q_{0t_{n}}(dx,dy)\leq\frac{\epsilon}{2}.
\end{equation}
For the second term, set $c_{0}=\min\left\{\delta_{0},\frac{\epsilon}{8q}\right\}$
 with $q=Q_{0}\left(\|x\|^{2}>\frac{\delta_{0}^{2}}{2}\right)<\infty$. Then, for $C=R_{\delta_{0}}\cap\{\|x\|\cdot\|y\|>c_{0}\}$ we obtain
\begin{equation*}
 |P_{2}|=\int_{C}\|x\|\cdot\|y\|Q_{0t_{n}}(dx,dy)+\int_{R_{\delta_{0}}\setminus C}\|x\|\cdot\|y\|Q_{0t_{n}}(dx,dy)\leq \frac{1}{2}Q_{0t_{n}}(C)+\int_{R_{\delta_{0}}\setminus C}\frac{\epsilon}{8q}Q_{0t_{n}}(dx,dy)
\end{equation*}
 \begin{equation*}
 \leq \frac{1}{2}Q_{0t_{n}}(\|x\|\cdot\|y\|>c_{0})+\frac{\epsilon}{8q}Q_{0t_{n}}(R_{\delta_{0}}\setminus C)
 \end{equation*}
 \begin{equation*}
\leq \frac{1}{2}Q_{0t_{n}}(\|x\|\cdot\|y\|>c_{0})+\frac{\epsilon}{8q}Q_{0t_{n}}\left(\left\{(x,y):\|x\|^{2}>\frac{\delta_{0}^{2}}{2}\right\}\cup\left\{(x,y):\|y\|^{2}>\frac{\delta_{0}^{2}}{2}\right\}\right)
 \end{equation*}
 \begin{equation*}
\leq \frac{1}{2}Q_{0t_{n}}(\|x\|\cdot\|y\|>c_{0})+\frac{\epsilon}{4q}Q_{0t_{n}}\left(\|x\|^{2}>\frac{\delta_{0}^{2}}{2}\right)=\frac{1}{2}Q_{0t_{n}}(\|x\|\cdot\|y\|>c_{0})+\frac{\epsilon}{4}.
 \end{equation*}
For $n$ large enough we have $Q_{0t_{n}}(\|x\|\cdot\|y\|>c_{0})<\frac{\epsilon}{2}$ and therefore
 \begin{equation}\label{LEMMAEq3}
 |P_{2}|=\int_{R_{\delta}}\|x\|\cdot\|y\|Q_{0t_{n}}(dx,dy)<\frac{\epsilon}{2}.
 \end{equation}
 Finally, combining $(\ref{LEMMAEq2})$ and $(\ref{LEMMAEq3})$, and letting $\epsilon\rightarrow0$, we obtain the result of the lemma.
 \subsection*{Proof of Proposition \ref{proMMA}}
 By independence of the random fields $( Y^{k}_{t})_{t\in\mathbb{R}^{l}}$, $k=1,...,r$, we have that the L\'{e}vy-Khintchine representation of $(X_{t})_{t\in\mathbb{R}^{l}}$ can be written as the product of the L\'{e}vy-Khintchine representation of the $(Y^{k}_{t})_{t\in\mathbb{R}^{l}}$, $k=1,...,r$. In other words, for any $n\in\mathbb{N}$ and $\theta_{j}\in\mathbb{R}^{d}$, $j=1,...,n$, we have
 \begin{equation*}
 \mathbb{E}\left[\exp\left(i\sum_{j=1}^{n}\langle \theta_{j}, X_{t_{j}}\rangle\right)\right]=\mathbb{E}\left[\exp\left(i\sum_{j=1}^{n}\langle \theta_{j}, \sum_{k=1}^{r}Y^{k}_{t_{j}}\rangle\right)\right]=\prod_{k=1}^{r}\mathbb{E}\left[\exp\left(i\sum_{j=1}^{n}\langle \theta_{j}, Y^{k}_{t_{j}}\rangle\right)\right].
 \end{equation*}
 Now $(X_{t})_{t\in\mathbb{R}^{l}}$ is stationary since for any $h\in\mathbb{R}^{l}$
 \begin{equation*}
 \mathbb{E}\left[\exp\left(i\sum_{j=1}^{n}\langle \theta_{j}, Y^{k}_{t_{j}}\rangle\right)\right]=\mathbb{E}\left[\exp\left(i\sum_{j=1}^{n}\langle \theta_{j}, Y^{k}_{t_{j}+h}\rangle\right)\right]
 \end{equation*}
 because $( Y^{k}_{t})_{t\in\mathbb{R}^{l}}$ is stationary, for any $k=1,..,r$. Moreover, $(X_{t})_{t\in\mathbb{R}^{l}}$ is ID since it is a sum of independent ID random fields.
 \\ To show that $(X_{t})_{t\in\mathbb{R}^{l}}$ is mixing we proceed as follows. Consider any sequence $(t_{n})_{n\in\mathbb{N}}\in\mathcal{T}$ and consider the joint random field $(X_{0},X_{t_{n}})$. First, notice that the covariance function of the Gaussian part of $(X_{t_{n}})_{t_{n}\in\mathbb{R}^{l}}$, call it $\Sigma_{X}(t_{n})$, is given by the sum of the covariance functions of the $(Y^{k}_{t})_{t\in\mathbb{R}^{l}}$, call them $\Sigma_{Y^{k}}(t_{n})$, $k=1,...,r$. Moreover, notice also that the L\'{e}vy measure of the L\'{e}vy-Khintchine formula of the law $\mathcal{L}(X_{t_{n}},X_{0})$, call it $Q_{X,0t_{n}}$, is given by the sum of the L\'{e}vy measures of the L\'{e}vy-Khintchine formula of the laws $\mathcal{L}(Y^{k}_{t_{n}},Y^{k}_{0})$, call them $Q_{Y^{k},0t_{n}}$, $k=1,...,r$. It is possible to see this in formulae.
 \begin{equation*}
 \mathbb{E}\left[\exp\left(i\langle \theta_{1}, X_{t_{n}}\rangle+i\langle \theta_{2}, X_{0}\rangle\right)\right]=\mathbb{E}\left[\exp\left(i\langle \theta_{1}, \sum_{k=1}^{r}Y^{k}_{t_{n}}\rangle+i\langle \theta_{2}, \sum_{k=1}^{r}Y^{k}_{0}\rangle\right)\right]
 \end{equation*}
 \begin{equation*}
 =\prod_{k=1}^{r}\mathbb{E}\left[\exp\left(i\langle \theta_{1}, Y^{k}_{t_{n}}\rangle+i\langle \theta_{2}, Y^{k}_{0}\rangle\right)\right]=\prod_{k=1}^{r}\exp\bigg\{i\left\langle\binom{\theta_{1}}{\theta_{2}},\binom{\alpha^{k}_{1}}{\alpha^{k}_{2}}\right\rangle-\frac{1}{2}\left\langle\binom{\theta_{1}}{\theta_{2}},\Gamma^{k}(t_{n})\binom{\theta_{1}}{\theta_{2}}\right\rangle
 \end{equation*}
 \begin{equation*}
 +\int_{\mathbb{R}^{2d}}\left(e^{i<\theta_{1},x>+i<\theta_{2},y>}-1-i<\theta_{1},x>\mathbf{1}_{[0,1]}(\|x\|)-i<\theta_{2},y>\mathbf{1}_{[0,1]}(\|y\|)\right)Q_{Y^{k},0t_{n}}(dx,dy)\bigg\}
 \end{equation*}
 \begin{equation*}
 =\exp\bigg\{i\left\langle\binom{\theta_{1}}{\theta_{2}},\binom{\sum_{k=1}^{r}\alpha^{k}_{1}}{\sum_{k=1}^{r}\alpha^{k}_{2}}\right\rangle-\frac{1}{2}\left\langle\binom{\theta_{1}}{\theta_{2}},\sum_{k=1}^{r}\Gamma^{k}(t_{n})\binom{\theta_{1}}{\theta_{2}}\right\rangle
 \end{equation*}
 \begin{equation*}
 +\int_{\mathbb{R}^{2d}}\left(e^{i<\theta_{1},x>+i<\theta_{2},y>}-1-i<\theta_{1},x>\mathbf{1}_{[0,1]}(\|x\|)-i<\theta_{2},y>\mathbf{1}_{[0,1]}(\|y\|)\right)\sum_{k=1}^{r}Q_{Y^{k},0t_{n}}(dx,dy)\bigg\}
 \end{equation*}
 \begin{equation*}
 =\exp\bigg\{i\left\langle\binom{\theta_{1}}{\theta_{2}},\binom{\alpha^{1}_{X}}{\alpha^{2}_{X}}\right\rangle-\frac{1}{2}\left\langle\binom{\theta_{1}}{\theta_{2}},\Gamma_{X}(t_{n})\binom{\theta_{1}}{\theta_{2}}\right\rangle
 \end{equation*}
 \begin{equation*}
 +\int_{\mathbb{R}^{2d}}\left(e^{i<\theta_{1},x>+i<\theta_{2},y>}-1-i<\theta_{1},x>\mathbf{1}_{[0,1]}(\|x\|)-i<\theta_{2},y>\mathbf{1}_{[0,1]}(\|y\|)\right)Q_{X,0t_{n}}(dx,dy)\bigg\}
 \end{equation*}
 where
 \begin{equation*}
 \Gamma^{k}(t_{n}) = 
  \begin{pmatrix}
   \Sigma(0) & \Sigma_{Y^{k}}(t_{n})  \\
   \Sigma_{Y^{k}}(t_{n}) & \Sigma(0)
  \end{pmatrix}.
  \end{equation*}
  In order to prove that $(X_{t})_{t\in\mathbb{R}^{l}}$ is mixing we need to show that conditions $(MM1)$ and $(MM2')$ hold. However, these conditions hold for each $\Sigma_{Y^{k}}(t_{n})$ and $Q_{Y^{k},0t_{n}}$, where $k=1,...,r$. Moreover, since both the covariance matrix function $\Sigma_{X}(t_{n})$ of the Gaussian part of $(X_{t})_{t\in\mathbb{R}^{l}}$ and its L\'{e}vy measure $Q_{X,0t_{n}}$ are sums of $\Sigma_{Y^{k}}(t_{n})$ and $Q_{Y^{k},0t_{n}}$ respectively, for $k=1,...,r$, then we have that these conditions hold also for them. Hence, $(X_{t})_{t\in\mathbb{R}^{l}}$ is mixing.
\section*{Appendix B: Proofs of Section 3}
\subsection*{Proof of Lemma \ref{lemmaMMA}}
The argument of the proof of Lemma 3.4 of \cite{FuSt} can be extended to our setting. To this end, notice that we can write
\begin{equation*}
\begin{pmatrix}
  X_{0} \\
  X_{t_{n}}
 \end{pmatrix}=\int_{S}\int_{\mathbb{R}^{l}}\begin{pmatrix}
   f(A,-s) \\
   f(A,t_{n}-s)
  \end{pmatrix}\Lambda(dA,ds), \qquad t_{n}\in\mathbb{R}^{l}.
\end{equation*}
From this and from Theorem \ref{TheoremRajRos} it is possible to compute the covariance matrix function of the Gaussian part of $(X_{t})_{t\in\mathbb{R}^{l}}$ by
\begin{equation*}
\Gamma(t_{n}) = 
 \begin{pmatrix}
  \Sigma(0) & \Sigma(t_{n})  \\
  \Sigma(t_{n}) & \Sigma(0)
 \end{pmatrix},
 \end{equation*}
 where
\begin{equation*}
\Sigma(t_{n})=\int_{S}\int_{\mathbb{R}^{l}}f(A,-s)\Sigma f(A,t_{n}-s)'ds\pi(dA), \qquad t_{n}\in\mathbb{R}^{l}.
\end{equation*}
The L\'{e}vy measure $Q_{0t_{n}}$ of $\mathcal{L}(X_{0},X_{t_{n}})$ is given by 
\begin{equation*}
Q_{0t_{n}}=\int_{S}\int_{\mathbb{R}^{l}}\int_{\mathbb{R}^{d}}\normalfont\textbf{1}_{B}(f(A,-s)x,f(A,t_{n}-s)x)Q(dx)ds\pi(dA),
\end{equation*}
for all Borel sets $B\subseteq\mathbb{R}^{2q}\setminus\{0\}$, using again Theorem \ref{TheoremRajRos}. Therefore, given this explicit representation of $Q_{0t_{n}}$ we have that
\begin{equation*}
\int_{\mathbb{R}^{2q}}\min (1,\|x\|\cdot\|y\|)Q_{0t_{n}}(dx,dy)=\int_{S}\int_{\mathbb{R}^{l}}\int_{\mathbb{R}^{d}}\min(1,\|f(A,-s)x\|\cdot\|f(A,t_{n}-s)x\|)Q(dx)ds\pi(dA).
\end{equation*}
Now by using Corollary \ref{corol} we complete the proof.
\subsection*{Proof of Theorem \ref{MMA}}
We generalise the arguments of given in the proof of Theorem 3.5 of \cite{FuSt}.
Following Lemma \ref{lemmaMMA}, in order to prove this theorem it is sufficient to show that
\begin{equation*}
\|\Sigma(t_{n})\|=\bigg\| \int_{S}\int_{\mathbb{R}^{l}}f(A,-s)\Sigma f(A,t_{n}-s)'ds\pi(dA) \bigg\|\stackrel{n\rightarrow\infty}{\rightarrow}0
\end{equation*}
and
\begin{equation*}
\int_{\mathbb{R}^{2d}}\min (1,\|x\|\cdot\|y\|)Q_{0t_{n}}(dx,dy)
\end{equation*}
\begin{equation*}
=\int_{S}\int_{\mathbb{R}^{l}}\int_{\mathbb{R}^{d}}\min(1,\|f(A,-s)x\|\cdot\|f(A,t_{n}-s)x\|)Q(dx)ds\pi(dA)\stackrel{n\rightarrow\infty}{\rightarrow}0,
\end{equation*}
for any $(t_{n})_{n\in\mathbb{N}}\in\mathcal{T}$.\\
First, we concentrate on proving that $\|\Sigma(t_{n})\|\stackrel{n\rightarrow\infty}{\rightarrow}0.$ Consider that by the existence of the MMA field we have (see Theorem \ref{TheoremRajRos})
\begin{equation*}
\int_{S}\int_{\mathbb{R}^{l}}\Big\|f(A,t-s)\Sigma^{\frac{1}{2}}\Big\|^{2}ds\pi(dA)<\infty,
\end{equation*}
for any $t\in\mathbb{R}^{l}$, where $\Sigma^{\frac{1}{2}}$ denotes the unique square root of $\Sigma$. Therefore for any $t\in\mathbb{R}^{l}$, the function $g_{t}:S\times\mathbb{R}^{l}\rightarrow\mathbb{R}, (A,s)\mapsto\|f(A,t-s)\Sigma^{\frac{1}{2}}\|$ is an element of $L^{2}(S\times\mathbb{R}^{l},\mathcal{B}(S\times\mathbb{R}^{l}),\pi\otimes\lambda^{l};\mathbb{R})$. The fact that the measure $\pi\otimes\lambda^{l}$ is $\sigma$-finite implies that every $L^{2}$-function can be approximated (in the $L^{2}$-norm) by an elementary function in
\begin{equation*}
\mathcal{E}:=\bigg\{f\in L^{2}(S\times\mathbb{R}^{l},\mathcal{B}(S\times\mathbb{R}^{l}),\pi\otimes\lambda^{l};\mathbb{R}):f=\sum_{i=1}^{k}c_{i}\normalfont\textbf{1}_{D_{i}\times R_{i}}, k\in\mathbb{N},
\end{equation*}
\begin{equation*}
c_{i}\in\mathbb{R},D_{i}\in\mathcal{B}(S), R_{i}\in\mathcal{B}(\mathbb{R}^{l}), i=1,...,k\bigg\}.
\end{equation*}
Let us now fix an arbitrary $\epsilon>0$ and choose an elementary function $\tilde{g}\in\mathcal{E}$ such that
\begin{equation*}
\|g_{0}-\tilde{g}\|_{L^{2}}=\left(\int_{S}\int_{\mathbb{R}^{l}}|g_{0}(A,s)-\tilde{g}(A,s)|^{2}ds\pi(dA)\right)^{\frac{1}{2}}<\epsilon.
\end{equation*}
Now we have that for any $t_{n}\in\mathbb{R}^{l}$
\begin{equation*}
\|\Sigma(t_{n})\|=\bigg\| \int_{S}\int_{\mathbb{R}^{l}}f(A,-s)\Sigma^{\frac{1}{2}} \left(f(A,t_{n}-s)\Sigma^{\frac{1}{2}}\right)'ds\pi(dA) \bigg\|
\end{equation*}
\begin{equation*}
\leq \int_{S}\int_{\mathbb{R}^{l}}g_{0}(A,s)\cdot g_{t}(A,s)ds\pi(dA)
\end{equation*}
\begin{equation*}
= \int_{S}\int_{\mathbb{R}^{l}}(g_{0}(A,s)-\tilde{g}(A,s))\cdot g_{t}(A,s)+\tilde{g}(A,s)(g_{t}(A,s)-\tilde{g}(A,s-t_{n}))+\tilde{g}(A,s)\tilde{g}(A,s-t_{n})ds\pi(dA).
\end{equation*}
By the Cauchy-Schwarz inequality we obtain
\begin{equation*}
\|\Sigma(t_{n})\|\leq\epsilon\cdot\|g_{t_{n}}\|_{L^{2}}+\|\tilde{g}\|_{L^{2}}\cdot\left(\int_{S}\int_{\mathbb{R}^{l}}|g_{t_{n}}(A,s)-\tilde{g}(A,s-t_{n})|^{2}ds\pi(dA)\right)^{\frac{1}{2}}+\int_{S}\int_{\mathbb{R}^{l}}|\tilde{g}(A,s)\tilde{g}(A,s-t_{n})|^{2}ds\pi(dA).
\end{equation*}
Now notice that
\begin{equation*}
\left(\int_{S}\int_{\mathbb{R}^{l}}|g_{t_{n}}(A,s)-\tilde{g}(A,s-t_{n})|^{2}ds\pi(dA)\right)^{\frac{1}{2}}=\left(\int_{S}\int_{\mathbb{R}^{l}}|g_{0}(A,s-t_{n})-\tilde{g}(A,s-t_{n})|^{2}ds\pi(dA)\right)^{\frac{1}{2}}<\epsilon
\end{equation*}
and that $\|g_{t_{n}}\|_{L^{2}}=\|g_{0}\|_{L^{2}}$, by a simple change of variables. In addition, $\|\tilde{g}\|_{L^{2}}\leq\|g_{0}\|_{L^{2}}+\|\tilde{g}-g_{0}\|_{L^{2}}<\|g_{0}\|_{L^{2}}+\epsilon$.
\\ Finally, it is possible to see that
\begin{equation*}
\int_{S}\int_{\mathbb{R}^{l}}|\tilde{g}(A,s)\tilde{g}(A,s-t_{n})|^{2}ds\pi(dA)=0,
\end{equation*}
for sufficiently large $t_{n}$ (or equivalently for sufficiently large $n$, where $t_{n}$ is an element of $(t_{n})_{n\in\mathbb{N}}\in\mathcal{T}$). This is because $\tilde{g}\in\mathcal{E}$. In particular, given $\tilde{g}\in\mathcal{E}$ then $\tilde{g}(A,s)=\sum_{i=1}^{k}c_{i}\normalfont\textbf{1}_{D_{i}\times R_{i}}(A,s), k\in\mathbb{N}$, hence $\tilde{g}(A,s-t_{n})=0$ for $t_{n}$ sufficiently large using the fact that the rectangles $R_{i}$ cannot cover the whole $\mathbb{R}^{l}$ since $\int_{S}\int_{\mathbb{R}^{l}}\|\tilde{g}\|^{2}ds\pi(dA)<\infty$, for any $i=1,...,k$. Therefore, we have 
\begin{equation*}
\|\Sigma(t_{n})\|<\epsilon\cdot\|g_{0}\|_{L^{2}}+(\epsilon+\|g_{0}\|_{L^{2}})\cdot\epsilon,
\end{equation*}
for sufficiently large $n$. This yields $\|\Sigma(t_{n})\|\rightarrow0$ as $n\rightarrow\infty$, for any $(t_{n})_{n\in\mathbb{N}}\in\mathcal{T}$.\\
We now move to the second objective of the proof. Indeed, we now prove that
\begin{equation*}
\int_{\mathbb{R}^{2q}}\min (1,\|x\|\cdot\|y\|)Q_{0t_{n}}(dx,dy)\stackrel{n\rightarrow\infty}{\rightarrow}0,
\end{equation*}
for any $(t_{n})_{n\in\mathbb{N}}\in\mathcal{T}$.\\
Consider an arbitrary $\epsilon>0$ and set $B_{r}:=\{(x,y)\in\mathbb{R}^{q}\times\mathbb{R}^{q}:\|x\|^{2}+\|y\|^{2}\leq r^{2}\}$. Recall now the argument used to prove $(\ref{TRUNCATION})$. In that argument we did not assume that the random field was mixing, but only that was stationary and ID. Thus, we have that for any $(t_{n})_{n\in\mathbb{N}}\in\mathcal{T}$ the following holds
\begin{equation*}
\sup\limits_{n\geq n_{0}}Q_{0t_{n}}(\mathbb{R}^{2q}\setminus B_{R})\leq \epsilon,
\end{equation*}
for some $R>1$ and some $n_{0}>0$. Therefore, for all $n\geq n_{0}$, we obtain that
\begin{equation*}
\int_{\mathbb{R}^{2q}}\min (1,\|x\|\cdot\|y\|)Q_{0t_{n}}(dx,dy)\leq  \int_{B_{R}}\min (1,\|x\|\cdot\|y\|)Q_{0t_{n}}(dx,dy)+\epsilon.
\end{equation*}
Now notice that when $\max\{\|u\|,\|v\|\}\leq R$ we have
\begin{equation*}
\min\{\|u\|\cdot\|v\|,1\}\leq R\cdot\min\{\|u\|,1\}\cdot\min\{\|v\|,1\}.
\end{equation*}
Hence, we have
\begin{equation*}
\int_{B_{R}}\min (1,\|x\|\cdot\|y\|)Q_{0t_{n}}(dx,dy)\leq R\cdot \int_{B_{R}}\min (1,\|x\|)\cdot\min (1,\|y\|)Q_{0t_{n}}(dx,dy)
\end{equation*}
\begin{equation*}
\leq R\cdot \int_{S}\int_{\mathbb{R}^{l}}\int_{\mathbb{R}^{d}}\min(1,\|f(A,-s)x\|)\cdot\min(1,\|f(A,t_{n}-s)x\|)Q(dx)ds\pi(dA).
\end{equation*}
By the existence of the MMA random field, we have that for any $t_{n}\in\mathbb{R}^{l}$ the function $h_{t_{n}}:S\times\mathbb{R}^{l}\times\mathbb{R}^{d}\rightarrow\mathbb{R}$, $h_{t_{n}}(A,s,x):=\min(1,\|f(A,t_{n}-s)x\|)$ is an element of $L^{2}(S\times\mathbb{R}^{l}\times\mathbb{R}^{d},\mathcal{B}(S\times\mathbb{R}^{l}\times\mathbb{R}^{d}),\pi\otimes\lambda^{l}\otimes Q;\mathbb{R})$. Also, the fact that every L\'{e}vy measure is $\sigma$-finite implies that the product measure $\pi\otimes\lambda^{l}\otimes Q$ is $\sigma$-finite as well and therefore it is possible to use the same approximation argument used above in the first part of this proof to show that
\begin{equation*}
\int_{S}\int_{\mathbb{R}^{l}}\int_{\mathbb{R}^{d}}\min(1,\|f(A,-s)x\|)\cdot\min(1,\|f(A,t_{n}-s)x\|)Q(dx)ds\pi(dA)\stackrel{n\rightarrow\infty}{\rightarrow}0,
\end{equation*}
for any $(t_{n})_{n\in\mathbb{N}}\in\mathcal{T}$, which completes the proof.
\section*{Appendix C: Proofs of Section 4}
\subsection*{Proof of Theorem \ref{Theorem-ergodicity}}
This proof is a multivariate and a random field extension of the proof of Theorem 1 of \cite{1997RoZa}.\\
``$\Rightarrow$": It is well known that any weakly mixing random field is ergodic.
\\ ``$\Leftarrow$": For the other direction we argue as follows. Let $(X_{t})_{t\in\mathbb{R}^{l}}$ be an ergodic $\mathbb{R}^{d}$-valued stationary ID random field. In this proof we will work with $j,k=1,...,d$ and we will not repeat it every time. We showed before that 
\begin{equation*}
\tau(t)=\left(\tau^{(jk)}(t)\right)_{j,k=1,...,d}
\end{equation*}
with
\begin{equation}\label{tau}
\tau^{(jk)}(t):=\tau\left(X_{0}^{(k)},X_{t}^{(j)}\right)=\sigma_{t}^{jk}+\int_{\mathbb{R}^{2}}(e^{ix}-1)\overline{(e^{iy}-1)}Q_{0t}^{jk}(dx,dy)
\end{equation}
is the autocodifference field of $(X_{t})_{t\in\mathbb{R}^{l}}$. Since $(X_{t})_{t\in\mathbb{R}^{l}}$ is an $\mathbb{R}^{d}$-valued ID and stationary random field, we have that
\begin{equation*}
\tau\left(X_{s}^{(k)},X_{t}^{(j)}\right)=\tau\left(X_{0}^{(k)},X_{t-s}^{(j)}\right).
\end{equation*}
Hence, as shown before in Proposition \ref{pro-ergodicity}, the function
\begin{equation*}
\tau^{(jk)}(t)=\tau\left(X_{0}^{(k)},X_{t}^{(j)}\right)
\end{equation*}
is nonnegative definite and $\tau^{(jk)}(0)=-\log\left(\mathbb{E}[e^{iX_{0}^{(j)}}] \cdot \mathbb{E}[e^{iX_{0}^{(k)}}]  \right)$
which is a constant. Hence, we can use Bochner's theorem, which implies that there exists a finite Borel measure $v$ on $\mathbb{R}^{l}$ such that
\begin{equation*}
\tau^{(jk)}(t)=\int_{\mathbb{R}^{l}}e^{i\langle t,\lambda\rangle}v(d\lambda).
\end{equation*}
Thus,
\begin{equation*}
\mathbb{E}[e^{i(X^{(j)}_{0}-X^{(k)}_{t})}]\left(\mathbb{E}[e^{iX_{0}^{(j)}}] \cdot \mathbb{E}[e^{iX_{0}^{(k)}}] \right)^{-1}=e^{\tau^{(jk)}(t)}=e^{\hat{v}(t)}=\widehat{\exp(v)}(t),
\end{equation*}
where $\exp(v)=\sum_{n=o}^{\infty}(v^{*n}/n!),v^{*0}=\delta_{0}$ and the symbol `` $\hat{}$ " denotes the Fourier transform. Notice that the last equality comes from the convolution theorem. Hence, $\tau^{(jk)}=\hat{v}$. Moreover, since both terms on the RHS of $(\ref{tau})$ are non-negative definite thanks to Proposition \ref{pro-ergodicity}, again by Bochner's theorem there exists finite Borel measures $v_{G}$ and $v_{P}$ on $\mathbb{R}^{l}$ such that
\begin{equation}\label{tau=v}
\tau^{(jk)}(t)=\hat{v}_{G}(t)+\hat{v}_{P}(t).
\end{equation}
Thus $v=v_{G}+v_{P}$. The ergodicity of the process implies that
\begin{equation*}
\frac{1}{(2T)^{l}}\int_{(-T,T]^{l}}\mathbb{E}\left[e^{i(X_{0}^{(j)}-X_{t}^{(k)})}\right]dt\stackrel{T\rightarrow\infty}{\rightarrow}\mathbb{E}\left[e^{iX_{0}^{(j)}}\right]\mathbb{E}\left[e^{-iX_{0}^{(k)}}\right].
\end{equation*}
Hence, combining this result with Lemma \ref{lemma-mu(0)}, we obtain
\begin{equation*}
\frac{1}{(2T)^{l}}\int_{(-T,T]^{l}}\widehat{\exp(v)}(t)dt\stackrel{T\rightarrow\infty}{\longrightarrow}\exp(v_{G}+v_{P})(\{0\})=1,
\end{equation*}
which implies that
\begin{equation}\label{eq3.4}
v_{G}(\{0\})+v_{P}(\{0\})=0 \quad\textnormal{and}\quad (v_{G}+v_{P})*(v_{G}+v_{P})(\{0\})=0,
\end{equation}
and so $v_{G}*v_{G}(\{0\})=0$. Therefore, we have that
\begin{equation*}
\frac{1}{(2T)^{l}}\int_{(-T,T]^{l}}(\sigma_{t}^{jk})^{2}dt=\frac{1}{(2T)^{l}}\int_{(-T,T]^{l}}(\hat{v}_{G}(t))^{2}dt\stackrel{T\rightarrow\infty}{\longrightarrow}v_{G}*v_{G}(\{0\})=0.
\end{equation*}
Since $\sigma_{t}^{jk}$ is real, we deduce from Lemma \ref{lemma-weakmixing} that there exists a set $D$ of density one in $\mathbb{R}^{l}$, such that
\begin{equation}\label{eq3.5}
\lim\limits_{n\rightarrow\infty}\sigma^{jk}_{t_{n}}=0,\qquad\textnormal{for any}\quad (t_{n})_{n\in\mathbb{N}}\in\mathcal{T}_{D}.
\end{equation}
Now we would like to prove a similar result for the L\'{e}vy measure of $\mathcal{L}(X_{0}^{(j)},X_{t_{n}}^{(k)})$, so that we can then apply Corollary \ref{coroll}, which will give us weak mixing of our random field $(X_{t})_{t\in\mathbb{R}^{l}}$. By equations (\ref{tau}), (\ref{tau=v}), (\ref{eq3.4}) and Lemma \ref{lemma-mu(0)} we have that
\begin{equation*}
\frac{1}{(2T)^{l}}\int_{(-T,T]^{l}}\int_{\mathbb{R}^{2}}(e^{ix}-1)\overline{(e^{iy}-1)}Q_{0t}^{jk}(dx,dy)dt=\frac{1}{(2T)^{l}}\int_{(-T,T]^{l}}\hat{v}_{P}(t)dt\stackrel{T\rightarrow\infty}{\longrightarrow}v_{P}(\{0\})=0.
\end{equation*}
When taking the real part of the first term above we get that
\begin{equation}\label{eq3.6}
\frac{1}{(2T)^{l}}\int_{(-T,T]^{l}}\int_{\mathbb{R}^{2}}[(\cos x-1)(\cos y -1)+\sin x\sin y]Q_{0t}^{jk}(dx,dy)dt\stackrel{T\rightarrow\infty}{\longrightarrow}0.
\end{equation}
Now consider the two integrands of eq. ($\ref{eq3.6}$). By stationarity and Proposition \ref{pro-ergodicity}, the functions
\begin{equation*}
t\rightarrow\int_{\mathbb{R}^{2}}(\cos x-1)(\cos y -1)Q_{0t}^{jk}(dx,dy),
\quad\textnormal{and}\quad
t\rightarrow\int_{\mathbb{R}^{2}}\sin x\sin yQ_{0t}^{jk}(dx,dy)
\end{equation*}
are non-negative definite. By Bochner's theorem, this implies that there exist finite Borel measures $\lambda_{1}$ and $\lambda_{2}$ such that equation (\ref{eq3.6}) can be written as
\begin{equation*}
\frac{1}{(2T)^{l}}\int_{(-T,T]^{l}}\hat{\lambda}_{1}dt+\frac{1}{(2T)^{l}}\int_{(-T,T]^{l}}\hat{\lambda}_{2}dt\stackrel{T\rightarrow\infty}{\longrightarrow}0,
\end{equation*}
and using Lemma \ref{lemma-mu(0)} we obtain $\lambda_{1}(\{0\})=\lambda_{2}(\{0\})=0$. Focusing on the first one, we conclude that
\begin{equation}\label{eq3.7}
\frac{1}{(2T)^{l}}\int_{(-T,T]^{l}}\int_{\mathbb{R}^{2}}(\cos x-1)(\cos y -1)Q_{0t}^{jk}(dx,dy)dt\stackrel{T\rightarrow\infty}{\longrightarrow}\lambda_{1}(\{0\})=0.
\end{equation}
Define $R_{T}(dx,dy)=\frac{1}{(2T)^{l}}\int_{(-T,T]^{l}}Q_{0t}^{jk}(dx,dy)dt$. Then we have
\begin{equation}\label{eq3.8}
\int_{\mathbb{R}^{2}}(\cos x-1)(\cos y -1)R_{T}(dx,dy)\stackrel{T\rightarrow\infty}{\longrightarrow}0.
\end{equation}
It is possible to notice that the family of finite measures $(R_{T}|_{K^{c}_{\delta}})_{T>0}$ is weakly relatively compact for every $\delta>0$. This is because by Lemma \ref{lemma-compact} $(Q_{0t}^{jk}|_{K^{c}_{\delta}})_{t\in\mathbb{R}^{l}}$ is weakly relatively compact for every $\delta>0$. The goal is now to show that
\begin{equation}\label{eq3.9}
\int_{\mathbb{R}^{2}}\min(|xy|,1)R_{T}(dx,dy)\stackrel{T\rightarrow\infty}{\longrightarrow}0.
\end{equation}
So, let $T_{n}\rightarrow\infty$, $T_{n}\in\mathbb{R}$. Using the diagonalization procedure we can find a subsequence $(T'_{n})$ of $(T_{n})$ and a measure $R$ on $\mathbb{R}^{2}\setminus\{0\}$ such that $R_{T'_{n}}|_{K_{\delta}^{c}}\Rightarrow R|_{K_{\delta}^{c}}$ as $n\rightarrow\infty$ for every $\delta>0$. Now, notice that $(\cos x-1)(\cos y -1)\geq 0$ and $(\cos x-1)(\cos y -1)= 0$ if $x=2\pi k$ or $y=2\pi k$, for $k\in\mathbb{N}$. Moreover, by eq. (\ref{eq3.8}) we have that
\begin{equation*}
\int_{K_{\delta}^{c}}(\cos x-1)(\cos y -1)R(dx,dy)=0,
\end{equation*}
for every $\delta>0$. Therefore, the measure $R$ is concentrated on the set of lines $\{(x,y):x\in2\pi\mathbb{Z}\,\,\textnormal{ or }\,\,y\in2\pi\mathbb{Z} \}$. Since the random field is stationary, the projections of $Q_{0t}$ (and of $Q_{0t}^{jk}$) onto the first and second axis (excluding zero) are equal to $Q_{0}$ (and to $Q_{0}^{j}$ for the first axis and to $Q_{0}^{k}$ for the second). The same holds for $R_{T}$ and for $R$.
\\ Suppose for now that $Q_{0}(\{x=(x_{1},...,x_{d})'\in\mathbb{R}^{d}:\exists j\in\{1,...,d\},x_{j}\in2\pi\mathbb{Z}\})=0$, which implies that for $\alpha\in\mathbb{Z}\setminus\{0\}$ we have $Q_{0}^{jk}(2\pi \alpha\times\mathbb{R})=Q_{0}^{jk}(\mathbb{R}\times2\pi \alpha)=0$. Then $R$ must be concentrated on the axes of $\mathbb{R}^{2}$. Hence, for every $\delta>0$ such that $R(\{(x,y):x^{2}+y^{2}=\delta^{2} \})=0$,
\begin{equation*}
\limsup\limits_{n\rightarrow\infty} \int_{\mathbb{R}^{2}}\min(|xy|,1)R_{T'_{n}}(dx,dy)
\end{equation*}
\begin{equation*}
\leq \limsup\limits_{n\rightarrow\infty} \int_{K_{\delta}^{c}}\min(|xy|,1)R_{T'_{n}}(dx,dy)+\sup\limits_{T>0}\frac{1}{(2T)^{l}}\int_{(-T,T]^{l}}\int_{\mathbb{R}^{2}}|xy|Q_{0t}^{jk}(dx,dy)dt.
\end{equation*}
Eq. (\ref{eq1.4}) implies that the last quantity can be made arbitrarily small. Hence, (\ref{eq3.9}) follows. From eq. (\ref{eq3.9}) and Lemma \ref{lemma-weakmixing} we obtain that there exists a set $D'$ of density one in $\mathbb{R}^{l}$ such that
\begin{equation}\label{eq3.10}
\lim\limits_{n\rightarrow\infty}\int_{\mathbb{R}^{2}}\min(|xy|,1)Q^{jk}_{0t_{n}}(dx,dy)=0,\qquad\textnormal{for any}\quad (t_{n})_{n\in\mathbb{N}}\in\mathcal{T}_{D'}.
\end{equation}
In case $D$ of eq. (\ref{eq3.5}) and $D'$ of eq. (\ref{eq3.10}) are different this is not a problem because the intersection of two (or a finite number) of density one sets is again a density one set. \\Hence, following the proof of Corollary \ref{corol} (and using its weak mixing version, namely Corollary \ref{coroll}) we obtain that $(X_{t})_{t\in\mathbb{R}^{l}}$ is weakly mixing with the additional assumption that $Q_{0}(\{x=(x_{1},...,x_{d})'\in\mathbb{R}^{d}:\exists j\in\{1,...,d\},x_{j}\in2\pi\mathbb{Z}\})=0$. However, this assumption can be eliminated by first using the fact that if $(X_{t})_{t\in\mathbb{R}^{l}}$ is ergodic then $(M_{a}X_{t})_{t\in\mathbb{R}^{l}}$ is ergodic too, and then by following the arguments of Theorem \ref{atom}. In particular, let $Z=\{z=(z_{1},...,z_{j})\in\mathbb{R}^{d}: z_{j}=2\pi k/y_{j}\enspace\forall j\in\{1,..,d\},\textnormal{ where }k\in\mathbb{Z}\textnormal{ and $y=(y_{1},...,y_{j})$ is an atom of $Q_{0}$}\}$. The set $Z$ is countable and hence there exists a nonzero $a\in\mathbb{R}^{d}\setminus Z$. Consider the random field $(M_{a}X_{t})_{t\in\mathbb{R}^{l}}$ and let $Q^{a}_{0}$ be the L\'{e}vy measure of $M_{a}X_{0}$. Then $Q^{a}_{0}$ has no atoms in the set $(\{x=(x_{1},...,x_{d})'\in\mathbb{R}^{d}:\exists j\in\{1,...,d\},x_{j}\in2\pi\mathbb{Z}\})$; since $(M_{a}X_{t})_{t\in\mathbb{R}^{l}}$ is also ergodic, it is weakly mixing by the arguments of this proof. Therefore, $(X_{t})_{t\in\mathbb{R}^{l}}$ is weakly mixing and the proof is complete.
\section*{Acknowledgement}
RP would like to thank the CDT in MPE and the Grantham Institute for providing funding for this research.

\end{document}